\newtheorem{theorem}{Theorem}
\newtheorem{lemma}[theorem]{Lemma}
\newtheorem{corollary}[theorem]{Corollary}
\newtheorem{conjecture}[theorem]{Conjecture}
\newtheorem{claim}{Claim}
\newtheorem{obs}[theorem]{Observation}
\def\soft#1{\leavevmode\setbox0=\hbox{h}\dimen7=\ht0\advance
    \dimen7 by-1ex\relax\if t#1\relax\rlap{\raise.6\dimen7
    \hbox{\kern.3ex\char'47}}#1\relax\else\if T#1\relax
    \rlap{\raise.5\dimen7\hbox{\kern1.3ex\char'47}}#1\relax
    \else\if d#1\relax\rlap{\raise.5\dimen7\hbox{\kern.9ex
    \char'47}}#1\relax\else\if D#1\relax\rlap{\raise.5\dimen7
    \hbox{\kern1.4ex\char'47}}#1\relax\else\if l#1\relax
    \rlap{\raise.5\dimen7\hbox{\kern.4ex\char'47}}#1\relax
    \else\if L#1\relax\rlap{\raise.5\dimen7\hbox{\kern.7ex
    \char'47}}#1\relax\else\message{accent \string\soft
    \space #1 not defined!}#1\relax\fi\fi\fi\fi\fi\fi}
\title{VC-dimension and Erd\H{o}s-P\'osa property}
\author[mtp]{Nicolas Bousquet\footnote{lastname@lirmm.fr}}
\author[lyon]{St\'ephan Thomass\'e\footnote{firstname.lastname@ens-lyon.fr. Partially supported by ANR Project STINT under Contract ANR-13-BS02-0007.}}
\address[mtp]{Universit\'e Montpellier 2, 161 rue Ada, 34392 Montpellier Cedex 5 - France.}
\address[lyon]{LIP, ENS Lyon, 46 all\'ee d'Italie, 69364 Lyon Cedex 07 - France.}
\begin{document}

\begin{abstract}
Let $G=(V,E)$ be a graph. A {\it $k$-neighborhood} in $G$ is a set
of vertices consisting of all the vertices at distance at most $k$ from 
some vertex of $G$. The hypergraph on vertex set $V$ which edge
set consists of all the $k$-neighborhoods of $G$ for all $k$ is the {\it neighborhood 
hypergraph} of $G$. Our goal in this paper is to investigate 
the complexity of a graph in terms of its neighborhoods.
Precisely, we define the {\it distance VC-dimension} of a graph $G$ 
as the maximum taken over all induced subgraphs $G'$ of $G$ of the VC-dimension 
of the neighborhood hypergraph of $G'$. 
For a class of graphs, having bounded distance VC-dimension both generalizes 
minor closed classes 
and graphs with bounded clique-width. 

Our motivation is a result of Chepoi, Estellon and Vax\`es~\cite{ChepoiEV07} asserting
that every planar graph of diameter $2\ell$ can be covered by a bounded 
number of balls of radius $\ell$. In fact, they obtained the existence of a function $f$ such that 
every set $\cal F$ of balls of radius $\ell$ in a planar graph
admits a hitting set of size $f(\nu)$ where $\nu$ is the maximum number 
of pairwise disjoint elements of $\cal F$. 


Our goal is to generalize the proof of~\cite{ChepoiEV07} with the unique 
assumption of bounded distance VC-dimension of neighborhoods. In other words, the set of balls 
of fixed radius in a graph with bounded distance VC-dimension has the Erd\H{o}s-P\'osa property.
\end{abstract}

\begin{keyword}
 dominating set, distance VC-dimension, Erd\H{o}s-P\'osa property, clique-minor, rankwidth.
\end{keyword}

\maketitle


\section{Introduction}

\paragraph{$B$-hypergraph and dominating sets}
Let $G=(V,E)$ be a graph. A \emph{dominating set of $G$} is a set $X$ of vertices such that for every vertex $v$, there exists a vertex $x \in X$ satisfies either $x=v$ or $v$ is a neighbor of $x$. 
In other words, all the vertices of $V$ are at distance at most one from a vertex of $X$. In this paper we focus on a generalization of dominating sets called dominating sets at distance $\ell$. A set $X$ is a \emph{dominating set at distance $\ell$} if every vertex of the graph is at distance at most $\ell$ from a vertex of $X$.

A hypergraph is a pair $(V,F)$ where $V$ is a set of vertices and $F$ is a set of subsets of $V$ called hyperedges. 
For the study of dominating sets, a natural hypergraph arises: the $B_1$-hypergraph. The \emph{$B_1$-hypergraph} of $G$ has vertex set $V$ and hyperedges are the closed neighborhoods of the vertices of the graph. 
Since we consider neighborhoods at distance $\ell$ in this paper, we naturally generalize the $B_1$-hypergraph into the $B_\ell$-hypergraph by replacing closed neighborhoods by balls of radius $\ell$ centered in every vertex of the graph. The \emph{$B$-hypergraph} is the edge-union of the $B_\ell$-hypergraphs for all $\ell$.

A \emph{hitting set} of a hypergraph $H=(V,F)$ is a subset of vertices intersecting every hyperedge. In other words, it is a subset $X$ of vertices such that for every $e \in F$, $e \cap X \neq \emptyset$. One can note that a hitting set of the $B_\ell$-hypergraph of a graph $G$ is a dominating set at distance $\ell$ of the graph $G$ (and the converse also holds). Indeed, let $X$ be a hitting set of the $B_\ell$-hypergraph $H$ of $G$. For every vertex $v \in V$, there exists $x \in X$ such that $x$ is at distance at most $\ell$ from $v$. So the whole set of vertices is at distance at most $\ell$ from a vertex of $X$, \emph{i.e.} $X$ is a dominating set at distance $\ell$ of $G$. In the following we focus on hitting sets of the $B_\ell$-hypergraphs. The minimum size of a hitting set, denoted by $\tau$, is called the \emph{transversality}. The \emph{packing number}, denoted by $\nu$, is the maximum number of pairwise disjoint hyperedges.

\paragraph{Complexity of graphs and VC-dimension} 
\begin{figure}
\centering
\includegraphics[scale=0.4]{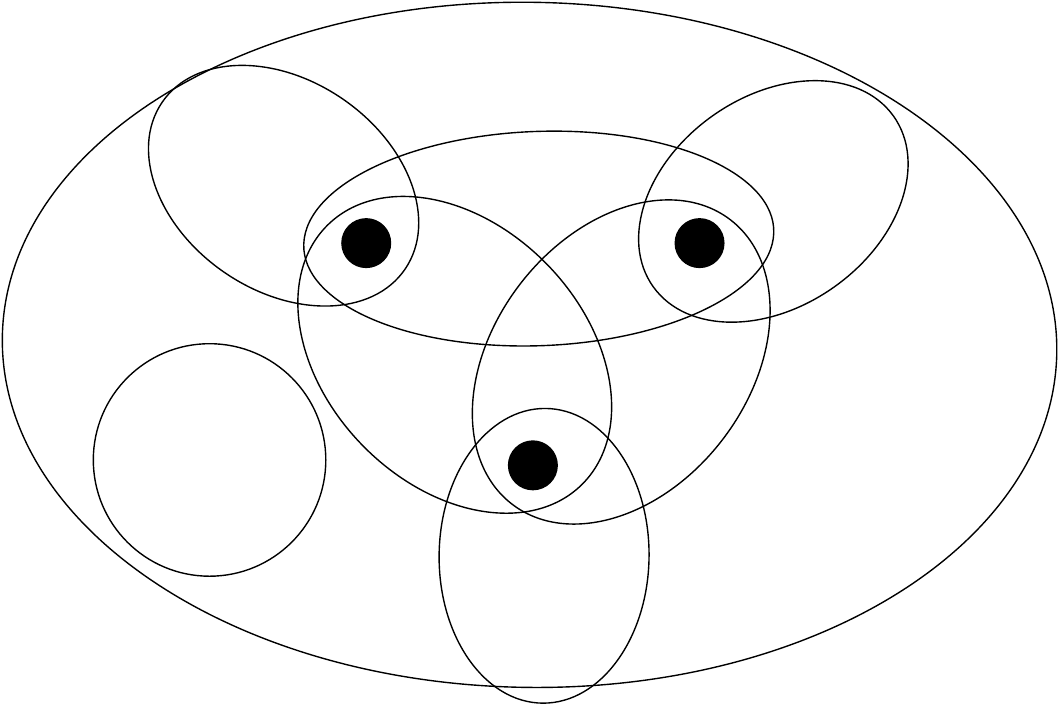}
\caption{A shattered set of size $3$.}
\label{taille3shattered}
\end{figure}

A set $X$ of vertices is \emph{shattered} (resp. \emph{$2$-shattered}) if for every subset $X'$ of $X$ (resp. every subset of $X$ of size $2$) there exists a hyperedge $e$ such that $e \cap X = X'$ (see Figure~\ref{taille3shattered}). Introduced in~\cite{Sauer72,VapnikC71}, the \emph{Vapnik-Chervonenkis dimension} (or \emph{VC-dimension} for short) (resp. \emph{$2$VC-dimension}) of a hypergraph $H$ is the maximum size of a shattered set (resp. $2$-shattered set).
It is a good complexity measure of a hypergraph, for instance in the learnability sense. A bounded VC-dimension provides upper bounds on  the number of hyperedges~\cite{Sauer72} but also on the transversality~\cite{DingSW94,HausslerW86,Matousek04}. The VC-dimension has many applications, in learnability theory \cite{HausslerW86} and in computational geometry \cite{ChazelleW89}. More recently, several applications were developed in graph theory, see~\cite{AlonB06,ChepoiEV07,LuczakT10} for instance.

One of our goals was to extend this notion on graphs to catch the complexity of a graph at large distance. 
The distance VC-dimension of a graph $G$ could be defined as the VC-dimension of the $B$-hypergraph of the graph $G$.
Since throughout this paper we only consider graphs closed under induced subgraphs, we define the \emph{distance VC-dimension of a graph} $G$ (resp. \emph{distance $2$VC-dimension} of the graph $G$) is the maximum over induced subgraphs of the distance VC-dimension (resp. $2$VC-dimension) of the $B$-hypergraph. 
Since the VC-dimension ``measures'' the local randomness of hypergraphs, it is natural to think that classes with a lot of structure might have a bounded VC-dimension. 
In Section~\ref{sec:classesboundedvc}, we prove that two famous graph classes have bounded distance VC-dimension. First we show that the class of $K_n$-minor free graphs has distance VC-dimension at most $n-1$. The proof is almost the proof of Chepoi, Estellon and Vax\`es that the $B_\ell$-hypergraph of planar graphs has distance VC-dimension at most $4$\footnote{In their paper, Chepoi, Estellon and Vax\`es noted that their proof for planar graphs can be extended to $K_n$-minor free graphs.}. Then we show that the class  of bounded rankwidth graphs have bounded distance VC-dimension. Actually, we prove a slightly stronger statement for these two classes: their distance $2$VC-dimension is bounded.  We finally provide some graphs of bounded distance VC-dimension with an arbitrarily large distance $2$VC-dimension.

\paragraph{Erd\H{o}s-P\'osa property}  
Chepoi, Estellon and Vax\`es \cite{ChepoiEV07} proved that every planar graph of diameter $2\ell$ can be covered by $c$ balls of radius $\ell$ (where $c$ does not depend on $\ell$). It answered a conjecture of Gavoille, Peleg, Raspaud and Sopena~\cite{GavoillePRS01}. Their proof uses the concept of VC-dimension but also planarity of the graph. One of our aims was to determine if the planarity arguments are necessary or if a purely combinatorial proof of this result exists. 

Let $G$ be a graph. We denote by respectively $\nu_\ell$ and $\tau_\ell$ the packing number and the transversality of the $B_\ell$-hypergraph of $G$.
Note that the $B_\ell$-hypergraph of a planar graph of diameter $2 \ell$ satisfies $\nu_\ell=1$. Indeed for every $u,v \in V$, since the diameter of the graph is at most $2\ell$, there exists a vertex $x$ at distance at most $\ell$ from both $u$ and $v$, so the hyperedges centered in $u$ and in $v$ intersect. Since $\tau_\ell$ equals the minimum size of a dominating set at distance $\ell$, we have $\tau_\ell \geq \nu_\ell$.
A class of hypergraphs such that the transversality of every hypergraph is bounded by a function of its packing number is said to satisfy the \emph{Erd\H{o}s-P\'osa property} (and the function is called the \emph{gap function}). In their seminal paper~\cite{ErdosP65}, Erd\H{o}s and P\'osa proved that the minimum size of a feedback vertex set can be bounded by a function of the the maximum number of vertex disjoint cycles: differently the cycle hypergraph of $G$ has the erd\H{o}s-P\'osa property (the vertices of the hypergraph are the vertices of the graph and the hyperedges are the cycles of the graph). 

In Section~\ref{VC}, we first simplify and generalize the proof of Chepoi, Estellon and Vax\`es. More precisely we prove that the $B_\ell$-hypergraph of any graph $G$ has a dominating set at distance $\ell$ of size $\mathcal{O}(\nu_\ell^{2d+1})$ where $d$ denotes the distance $2$VC-dimension of $G$. Note that the function depends on $\nu_\ell$ but not directly on $\ell$. Since planar graphs have distance $2$VC-dimension at most $4$, it ensures that the $B_\ell$ hypergraph of any planar graph of diameter $2\ell$ satisfies $\tau_\ell \leq 35200$. There is no doubt that this upper bound is still far away from the optimal one.
For small diameters, better bounds exist. For instance every planar graph of radius $2$ has a dominating set of size at most $3$~\cite{GoddardH02,MacGillivrayS96}. 

Since some graphs of bounded distance VC-dimension have an arbitrarily large distance $2$VC-dimension, it raises a natural question: is it possible to extend this result on graphs of bounded distance $2$VC-dimension to graphs of bounded distance VC-dimension. Section~\ref{VC} consists in proving that the answer to this question is positive.
More formally, we prove that there exists a function $f$  such that the $B_\ell$-hypergraph of a graph of distance VC-dimension $d$ has a hitting set of size at most $f(\nu_{\ell},d)$. The original proof of Chepoi, Estellon and Vax\`es for planar graphs is based on the same method but they conclude using topological properties of planar graphs. Since we only deal with combinatorial structures, our proof is more technically involved. Note nevertheless that the function $f$ is exponential in the distance VC-dimension while the one provided by the distance $2$VC-dimension is polynomial.

We will finally close this paper by some concluding remarks and open problems on distance VC-dimension and Erd\H{o}s-P\'osa property.

\section{Preliminaries}
It is sometimes convenient to see a hypergraph as its \emph{incidence bipartite graph} $B_H$ with vertex set $V \cup E$ in which there is an edge 
between $x \in V$ and $e \in E$ iff $x \in e$. Note that the pair $(V,E)$ is oriented, and the hypergraph associated to 
the pair $(E,V)$ is called the \emph{dual hypergraph}. The vertices of the dual hypergraph are the hyperedges of the original 
one, and the hyperedges of the dual hypergraph are the subsets of $E$ containing the vertex $v$, for every $v$.
%
The \emph{dual VC-dimension of $H$} is the VC-dimension of the dual hypergraph of $H$. The VC-dimension of $H$ and the dual VC-dimension of $H$ are equivalent 
up to an exponential function~\cite{Assouad83}.
Similarly, the \emph{dual $2$VC-dimension of $H$} is the $2$VC-dimension of the dual hypergraph of $H$. The 
$2$VC-dimension is larger than or equal to the VC-dimension and the gap can be arbitrarily large. Indeed, 
consider the clique $K_n$. Its $2$VC-dimension is equal to $n$ whereas its VC-dimension is at most $2$ (since 
no hyperedge contains $3$ vertices). The same example ensures that no function links $2$VC-dimension and dual $2$VC-dimension.

A \emph{transversal set} (or \emph{hitting set}) of a hypergraph $H$ is a set of vertices intersecting each hyperedge. The \emph{transversality} $\tau$ of a hypergraph is the minimum size of a 
transversal set. The \emph{packing number} $\nu$ of a hypergraph is the maximum number of vertex disjoint hyperedges. 
A class of hypergraphs $\mathcal{H}$ has the \emph{Erd\H{o}s-P\'osa property} if there exists a function $f$ such that for 
all $H \in \mathcal{H}$, $\tau \leq f(\nu)$.
We denote by $\nu_{\ell}$ and $\tau_{\ell}$ respectively the packing number and the transversality of the $B_\ell$-hypergraph of $G$.
Note that the $B_\ell$-hypergraph of a graph $G$ and its dual are the same since for every pair of vertices $x,y$,
$x \in B(y,\ell)$ if and only if $y \in B(x,\ell)$. So:

\begin{obs}\label{obs:Blisomdual}
 The $B_\ell$-hypergraph is isomorphic to the dual of the $B_\ell$-hypergraph.
\end{obs}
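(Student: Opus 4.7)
The plan is to exhibit an explicit isomorphism between the $B_\ell$-hypergraph and its dual by mapping each vertex $v \in V$ to the hyperedge $B(v,\ell)$, which becomes a vertex of the dual. First I would write out both hypergraphs carefully: on one side, the $B_\ell$-hypergraph has vertex set $V$ and hyperedge set $\{B(v,\ell) : v \in V\}$; on the other side, its dual has vertex set $\{B(v,\ell) : v \in V\}$ and hyperedge set consisting, for each $x \in V$, of $\{B(y,\ell) : x \in B(y,\ell)\}$.

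Next I would verify that the map $\varphi : v \mapsto B(v,\ell)$ on vertices, together with the corresponding map on hyperedges that sends the hyperedge $B(x,\ell)$ of the $B_\ell$-hypergraph to the hyperedge of the dual indexed by $x$, is an isomorphism. The incidence check reduces to a single equivalence: under $\varphi$, the vertex $\varphi(y) = B(y,\ell)$ lies in the image hyperedge indexed by $x$ iff $x \in B(y,\ell)$, which must be equivalent to $y \in B(x,\ell)$. This is exactly the symmetry of graph distance, $d(x,y) = d(y,x)$.

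The main (and only) ingredient is therefore symmetry of the distance function; there is no real obstacle. I would conclude by noting that this is essentially a self-duality statement for ball hypergraphs in undirected graphs, and it will be the reason why statements on the $B_\ell$-hypergraph can be freely transferred to its dual throughout the paper.
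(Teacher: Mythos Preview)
Your proposal is correct and matches the paper's own justification, which is the single sentence preceding the observation: the incidence relation is symmetric because $x \in B(y,\ell)$ iff $y \in B(x,\ell)$, i.e.\ by symmetry of the graph distance. Your explicit map $\varphi: v \mapsto B(v,\ell)$ simply unpacks this one-liner.
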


Let $G=(V,E)$ be a graph. Let $X \subseteq V$. The \emph{graph induced by $X$} is the graph on vertex set $X$ whose edges are edges of $G$ with both endpoints in $X$. A \emph{walk} of length $k$ from $x \in V$ to $y \in V$ is a sequence of vertices 
$x=x_0,x_1,\ldots,x_{k-1},x_k=y$ where $x_ix_{i+1} \in E$ for each $0\leq i \leq k-1$. A \emph{path} is a walk 
with pairwise distinct vertices. The vertices $x$ and $y$ are the \emph{endpoints} of the walk. 
The \emph{$x_ix_j$-subpath} is the path $x_i,x_{i+1},\ldots,x_j$. The \emph{neighbors} of the vertex $x_i$ 
on the path are the vertices $x_{i-1}$ and $x_{i+1}$ whenever they exist. A 
\emph{minimum path} from $x$ to $y$, also called \emph{minimum $xy$-path}, is a path of minimum length 
from $x$ to $y$. The \emph{distance between $x$ and $y$}, denoted by $d(x,y)$ is the length of a minimum $xy$-path
when such a path exists and $+ \infty$ otherwise. The \emph{distance} between a set $X$ and a set $Y$
is the minimum for all $x,y \in X \times Y$ of the distance between $x$ and $y$.
The \emph{ball of center $x$ and radius $k$}, denoted by $B(x,k)$, is the set of vertices at distance 
at most $k$ from $x$. The \emph{neighbors} of $x$, denoted by $N(x)$ are the vertices of $B(x,1)$ 
distinct from $x$.



Let us conclude this section by an observation which ensures that we can restrict our study to connected subgraphs:
\begin{obs}\label{obs:connectedgraphs}
 The distance VC-dimension of a non connected graph is the distance VC-dimension of the maximum of its connected components.
\end{obs}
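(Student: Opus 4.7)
The plan is to prove equality in two directions, with the nontrivial direction reducing to the fact that any ball in a graph is contained in a single connected component.

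First, the inequality $\mathrm{distVC}(G) \ge \max_i \mathrm{distVC}(C_i)$ (where $C_1,\dots,C_k$ are the connected components of $G$) is immediate from the definition: each $C_i$ is itself an induced subgraph of $G$, so every shattered set witnessing the distance VC-dimension of some induced subgraph of $C_i$ is also a shattered set in the corresponding induced subgraph of $G$.

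For the reverse inequality, I would fix an induced subgraph $G'$ of $G$ and a set $X$ that is shattered by the $B$-hypergraph of $G'$, and prove the key claim that $X$ lies inside a single connected component of $G'$. Suppose for contradiction that $X$ contains two vertices $x_1, x_2$ lying in distinct components of $G'$. Since $X$ is shattered, there must exist a vertex $v \in V(G')$ and an integer $k$ such that $B(v,k) \cap X = \{x_1, x_2\}$. But every vertex of $B(v,k)$ is joined to $v$ by a walk in $G'$, hence lies in the same component of $G'$ as $v$; thus $B(v,k)$ cannot contain two vertices from different components, a contradiction.

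Once the claim is established, I would conclude as follows. The shattered set $X$ lies in some component $G'_0$ of $G'$. Because $G'_0$ is a full component, distances inside $G'_0$ coincide with distances in $G'$ restricted to $V(G'_0)$, so the balls of $G'$ meeting $V(G'_0)$ are exactly the balls of $G'_0$; hence $X$ is shattered in the $B$-hypergraph of $G'_0$ itself. Now $G'_0$ is an induced subgraph of some connected component $C_i$ of $G$, which yields $\mathrm{distVC}(C_i) \ge |X|$. Taking the maximum over all choices of $G'$ and $X$ gives $\mathrm{distVC}(G) \le \max_i \mathrm{distVC}(C_i)$, completing the proof. The only real subtlety is the claim about shattered sets living in one component; everything else is bookkeeping about how the definitions behave under restriction to induced subgraphs.
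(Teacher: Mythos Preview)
The paper states this as an observation and gives no proof, so there is nothing to compare your argument against directly. Your approach is the natural one and almost works, but there is one real gap. When you pass from ``the balls of $G'$ meeting $V(G'_0)$ are exactly the balls of $G'_0$'' to ``$X$ is shattered in the $B$-hypergraph of $G'_0$,'' you are implicitly assuming that every trace $X'\subseteq X$ is realised by a ball that meets $G'_0$. For $X'\neq\emptyset$ this is automatic, but for $X'=\emptyset$ the witnessing ball in $G'$ may lie entirely in another component; since every ball of $G'_0$ contains its own centre, there is no ball of $G'_0$ disjoint from $X$ when $X=V(G'_0)$.

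This gap cannot simply be patched, because the observation as literally stated is off by one. Take $G$ to be an edge $xy$ together with an isolated vertex $u$: then $\{x,y\}$ is shattered in the $B$-hypergraph of $G$ (the ball $B(u,0)=\{u\}$ supplies the empty trace), so $\mathrm{distVC}(G)=2$, whereas each connected component has distance VC-dimension at most $1$. What your argument \emph{does} establish is the inequality $\mathrm{distVC}(G)\le 1+\max_i\mathrm{distVC}(C_i)$: just remove any one vertex $x_0$ from $X$ and use $B(x_0,0)$ to realise the empty trace on $X\setminus\{x_0\}$ inside $G'_0$. This weaker bound is entirely sufficient for the paper's only use of the observation (reducing a constant upper bound on the distance VC-dimension to the connected case), so the discrepancy is harmless in context.
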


\section{Graphs of bounded distance VC-dimension}\label{sec:classesboundedvc}

In this section we prove that $K_n$ minor-free graphs and bounded rank-width graphs have bounded distance $2$VC-dimension.
In addition we provide a class of graphs with arbitrarily large distance $2$VC-dimension and distance VC-dimension at most $18$.

\subsection{$K_d$-minor-free graphs have bounded distance VC-dimension}\label{knmin}
A graph $H$ is a \emph{minor} of $G$ if $H$ can be obtained from $G$ by contracting edges, 
deleting edges, and deleting vertices. Theorem~\ref{kdminor} 
is roughly Proposition 1 of \cite{ChepoiEV07}. Since our definitions and statements are slightly different, 
we prove it for the sake of completeness. We first prove an easy lemma before stating the main theorem of
this section.

\begin{lemma}\label{ballcontain}
If $z$ is on a minimum $xy$-path, the ball $B(z,d(x,z))$ is included in $B(y,d(x,y))$.
\end{lemma}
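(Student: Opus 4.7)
The plan is to verify the inclusion pointwise using the triangle inequality together with the additivity of distances along a shortest path.

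First I would record the key identity: since $z$ lies on a minimum $xy$-path, the $xz$-subpath and the $zy$-subpath are themselves minimum paths (otherwise one could splice a shorter alternative in and contradict minimality of the $xy$-path), hence
\[
d(x,y) \;=\; d(x,z) + d(z,y).
\]

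Next, pick an arbitrary $w \in B(z, d(x,z))$, so $d(w,z) \le d(x,z)$ by definition. The triangle inequality gives
\[
d(w,y) \;\le\; d(w,z) + d(z,y) \;\le\; d(x,z) + d(z,y) \;=\; d(x,y),
\]
which is exactly the statement that $w \in B(y, d(x,y))$. Since $w$ was arbitrary, $B(z,d(x,z)) \subseteq B(y,d(x,y))$.

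There is no real obstacle here; the only subtle point is the additivity identity for distances along a minimum path, which I would justify in one line as above. The rest is a single application of the triangle inequality.
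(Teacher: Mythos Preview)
Your proof is correct and matches the paper's approach: both use the additivity identity $d(x,y)=d(x,z)+d(z,y)$ for a vertex on a minimum path, followed by a triangle-inequality step to obtain the ball inclusion. The paper phrases the second step as ``$B(y,d(y,z))$ contains $z$, hence $B(y,d(y,z)+d(z,x))$ contains $B(z,d(x,z))$,'' which is exactly your pointwise argument expressed in terms of nested balls.
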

\begin{proof}
Since $z$ is on a minimum $xy$-path, $d(x,y)=d(x,z)+d(z,y)$. Hence $B(y,d(y,z))$ contains $z$ and 
then $B(y,d(y,z)+d(z,x))$ contains $B(z,d(x,z))$.
\end{proof}

\begin{theorem}\label{kdminor}
A $K_d$-minor-free graph has distance $2$VC-dimension at most $d-1$.
\end{theorem}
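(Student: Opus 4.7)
The plan is to prove the contrapositive: whenever the $B$-hypergraph of an induced subgraph $G'$ of $G$ admits a $2$-shattered set $X=\{x_1,\dots,x_d\}$, the graph $G$ contains $K_d$ as a minor. Since any minor of $G'$ is a minor of $G$, it is enough to produce $d$ pairwise disjoint connected branch sets $B_1,\dots,B_d$ in $G'$ with $x_i\in B_i$ and an edge of $G'$ between every pair $B_i,B_j$.

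The first step is to extract a convenient walk from each $2$-shattering witness. For each pair $\{i,j\}$, fix a ball $B(v_{ij},r_{ij})$ with $B(v_{ij},r_{ij})\cap X=\{x_i,x_j\}$, and concatenate a shortest $v_{ij}x_i$-path with a shortest $v_{ij}x_j$-path into a walk $W_{ij}$ from $x_i$ to $x_j$. Because every vertex on a shortest path from the centre of a ball lies inside that ball, $W_{ij}$ is contained in $B(v_{ij},r_{ij})$ and therefore meets $X$ only at $x_i$ and $x_j$ (the argument being a direct consequence of Lemma~\ref{ballcontain}).

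The branch sets are then defined by a Voronoi partition: let $\phi(u)$ be the smallest index $i$ minimising $d_{G'}(u,x_i)$, and set $B_i:=\phi^{-1}(i)$. A standard BFS argument shows each $B_i$ is connected and contains $x_i$: along any shortest $u$-to-$x_i$ path the triangle inequality $d(v,x_k)\ge d(u,x_k)-d(u,v)\ge d(u,x_i)-d(u,v)=d(v,x_i)$ keeps $v$ in $B_i$, with the smallest-index tie-breaking handling equalities. The key remaining task is to exhibit an edge of $G'$ between $B_i$ and $B_j$ for each pair, and this is where the ball structure from Step~1 is used. I claim $\phi(u)\in\{i,j\}$ for every vertex $u$ of $W_{ij}$: on the $v_{ij}x_i$-half, a vertex $u$ at distance $s$ from $v_{ij}$ satisfies $d(u,x_i)\le r_{ij}-s$, while for $k\notin\{i,j\}$ the inequality $d(v_{ij},x_k)\ge r_{ij}+1$ combined with the triangle inequality gives $d(u,x_k)\ge r_{ij}+1-s>d(u,x_i)$, and symmetrically on the other half. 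Hence $\phi$ takes values in $\{i,j\}$ all along $W_{ij}$, starts at $i$ and ends at $j$, so a crossing edge between $B_i$ and $B_j$ must appear somewhere on $W_{ij}$. Contracting each $B_i$ to a single vertex then exhibits $K_d$ as a minor of $G'$.

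The main obstacle is precisely this last step: turning the ball condition into a strict domination of $d(u,x_i)$ over $d(u,x_k)$ for every $k\notin\{i,j\}$ and every $u\in W_{ij}$. The strict inequality $d(v_{ij},x_k)>r_{ij}$ is what makes the argument work, and it must be combined carefully with the tie-breaking rule so that no vertex of $W_{ij}$ is accidentally pushed into some $B_k$ with $k\notin\{i,j\}$; the verification that the sharp triangle inequality above leaves no room for such a mishap is the only delicate calculation in the proof.
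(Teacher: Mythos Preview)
Your argument is correct, and it takes a genuinely different route from the paper's. The paper chooses each witnessing ball with \emph{minimal} radius, proves that the resulting central path $P_{ij}$ is an honest path (Claim~\ref{midvertex}), and then carries out a careful case analysis (Claim~\ref{kdminlemma}) showing that two central paths can only meet near a common endpoint; the branch sets are built locally as halves of the central paths. You bypass all of this by taking a global Voronoi partition of $G'$ with respect to $\{x_1,\dots,x_d\}$ and an index tie-break: connectivity of each cell is the standard ``distances only improve along a shortest path to the centre'' argument, and the strict inequality $d(v_{ij},x_k)\ge r_{ij}+1$ for $k\notin\{i,j\}$ forces every vertex of $W_{ij}$ to have $\phi$-value in $\{i,j\}$, regardless of ties. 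In particular you never need the minimality of $r_{ij}$, you never need $W_{ij}$ to be a path rather than a walk, and you avoid the intersection analysis entirely. What the paper's approach buys is smaller, explicitly described branch sets (subsets of the $P_{ij}$'s only), which is sometimes useful downstream; what your approach buys is a shorter, more robust proof. One cosmetic point: restrict to the connected component of $G'$ containing $X$ before forming the Voronoi cells, so that vertices at infinite distance from every $x_i$ do not contaminate $B_1$.
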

\begin{proof}
Let $G$ be a graph with distance $2$VC-dimension $d$. Let $X= \{x_1,x_2,\ldots,x_d \}$ be a set of vertices of $G$ which is 
$2$-shattered by the hyperedges of the $B$-hypergraph of $G$. Hence, for every pair $(i,j)$, there exists
a vertex $c_{i,j}$ and an integer $r_{i,j}$ such that $B(c_{i,j},r_{i,j}) \cap X =\{ x_i$, $x_j \}$.
We assume moreover that $r_{i,j}$ is minimum for all choices of $(c_{i,j},r_{i,j})$. A {\it central path}
$P_{i,j}$ is the concatenation of a minimum path from $x_i$ to $c_{i,j}$ and a minimum path from 
$c_{i,j}$ to $x_j$.

\begin{claim}\label{midvertex}
A central path is indeed a path. 
\end{claim}
\begin{proof}
Assume by contradiction that $x$ appears more than once in a central path $P_{i,j}$. Since $P_{i,j}$ is a concatenation 
of a shortest $x_ic_{i,j}$-path and a shortest $c_{i,j}x_j$-path, $x$ appears once between $x_i$ and $c_{i,j}$ and once 
between $c_{i,j}$ and $x_j$. Let us call $Q_1$ the subpath of $P_{i,j}$ from $x$ to $c_{i,j}$ and 
$Q_2$ the subpath of $P_{i,j}$ from $c_{i,j}$ to $x$. Note that $Q_1$ and $Q_2$ are both shortest
paths connecting $c_{i,j}$ and $x$, hence replacing $Q_2$ by the mirror of $Q_1$ gives another 
central path $P'_{i,j}$. The two neighbors of $c_{i,j}$ in $P'_{i,j}$ are 
the same vertex $v$, contradicting the minimality of $r_{i,j}$ 
since $B(v,r_{i,j}-1) \cap X =\{ x_i$, $x_j \}$.
\end{proof}

\begin{claim}\label{kdminlemma}
If $x$ belongs to two distinct central paths, then these paths are $P_{i,j}$ and $P_{i,l}$, and we both have
$d(x,x_i) < d(x,x_j)$ and $d(x,x_i) < d(x,x_l)$.
\end{claim}
\begin{proof}
Assume that $x$ appears in $P_{i,j}$ and $P_{k,l}$, where $d(x,x_i) \leq  d(x,x_j)$ and $d(x,x_k) \leq  d(x,x_l)$.
Free to exchange the roles of $P_{i,j}$ and $P_{k,l}$, we can also assume that $d(x,x_k) \leq d(x,x_i)$. By 
Lemma~\ref{ballcontain}, $x_k \in B(c_{i,j},r_{i,j})$, hence we have $x_k=x_i$ or $x_k=x_j$. Since 
$d(x,x_k) \leq d(x,x_i)\leq  d(x,x_j)$ and $x_k$ is either $x_i$ or $x_j$, we have $d(x,x_k) = d(x,x_i)$.
Hence $d(x,x_i) \leq d(x,x_k)$, and by the same argument, we have $x_i=x_k$ or $x_i=x_l$. Since the central paths
are distinct, we necessarily have $x_i=x_k$. Observe that $d(x,x_i)=d(x,x_j)$, hence $d(x,x_j) \leq   d(x,x_i)$, would give
by the same argument  $x_j=x_k$, hence a contradiction since we would have $x_i=x_j$. 
Therefore $d(x,x_i) < d(x,x_j)$, and for the same reason $d(x,x_i) < d(x,x_l)$.
\end{proof}

Let us now construct some connected subsets $X_i$ for all $1 \leq i \leq d$. For every 
path $P_{i,j}$, the vertices of $P_{i,j}$ closer to $x_i$ than to $x_j$ are added to $X_i$,
the vertices of $P_{i,j}$ closer to $x_j$ than to $x_i$ are added to $X_j$, and the midvertex
(if any) is arbitrarily added to $X_i$ or to $X_j$. 

The crucial fact is that the sets $X_i$ are pairwise disjoint. Indeed, by Claim~\ref{midvertex} and Claim \ref{kdminlemma},
if a vertex $x$ appears in two distinct central paths, these are $P_{i,j}$ and $P_{i,l}$, where 
$d(x,x_i) < d(x,x_j)$ and $d(x,x_i) < d(x,x_l)$. In particular $x$ belongs in both cases to $X_i$.

By construction, the sets $X_i$ are connected and there is always an edge between
$X_i$ and $X_j$ since their union contains $P_{i,j}$. Therefore if the distance $2$VC-dimension is 
at least $d$, the graph contains $K_d$ as a minor.
\end{proof}

\subsection{Bounded rankwidth graphs have bounded distance VC-dimension}\label{secrank}
Let us first recall the definition of rankwidth, introduced by Oum and Seymour in \cite{OumSeymour06}. Let 
$G=(V,E)$ be a graph and  $(V_1,V_2)$ be a partition of $V$. Let $M_{V_1,V_2}$ be the matrix of size 
$|V_1| \times |V_2|$ such that the entry $(x_1,x_2) \in V_1 \times V_2$ equals $1$ if $x_1x_2 \in E$ 
and $0$ otherwise. The \emph{cutrank} $cr(V_1,V_2)$ of $(V_1,V_2)$ is the rank of the matrix $M_{V_1,V_2}$ 
over the field $\mathbb{F}_2$. A \emph{ternary tree} is a tree with nodes of degree $3$ or $1$. The 
nodes of degree $3$ are the \emph{internal nodes}, the other nodes being the \emph{leaves}. 
A \emph{tree-representation of $G$} is a pair $(T,f)$ where $T$ is a ternary tree with $|V|$ leaves 
and $f$ is a bijection from $V$ to the set of leaves. Every edge $e$ of $T$ defines a partition 
of the leaves of $T$. Therefore it defines a partition of the 
vertex set $V$ into $(V_1^e,V_2^e)$. The \emph{rankwidth} $rw$ of a graph $G$ is defined by:
$$rw(G)=\min_{(T,f)} \max_{e \in E(T)} cr(V_1^e,V_2^e)$$

Before stating the main result, let us first state two lemmas concerning rankwidth and ternary trees.

\begin{lemma}\label{patates}
Let $G=(V,E)$ be a graph of rankwidth $k$ and $X,Y$ be the partition of $V$ induced by an edge of a 
tree-representation of $G$ of cutrank $k$. There exist partitions of $X$ and $Y$ into at most $2^k$ sets 
$X_1,\ldots,X_{2^k}$ and $Y_1,\ldots,Y_{2^k}$ such that for all $i,j$, $(X_i \times Y_j)\cap E= \varnothing$ 
or $(X_i \times Y_j)\cap E= X_i \times Y_j$.
\end{lemma}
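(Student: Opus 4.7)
The plan is to argue directly from the $\mathbb{F}_2$-rank of the adjacency matrix $M=M_{X,Y}$. Since $M$ has rank at most $k$ over $\mathbb{F}_2$, its row space is a subspace of $\mathbb{F}_2^{|Y|}$ of dimension at most $k$ and therefore contains at most $2^k$ vectors. In particular, the rows of $M$ (indexed by $X$) take at most $2^k$ distinct values. The same reasoning applied to the column space shows that the columns of $M$ (indexed by $Y$) take at most $2^k$ distinct values.

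I then define the partitions by grouping vertices with identical incidence patterns. Specifically, declare $x\sim x'$ in $X$ if the rows of $M$ indexed by $x$ and $x'$ coincide, equivalently, if $x$ and $x'$ have exactly the same set of neighbors in $Y$. The equivalence classes form a partition $X_1,\ldots,X_p$ of $X$ with $p\le 2^k$ by the row-count bound. Symmetrically, define $Y_1,\ldots,Y_q$ with $q\le 2^k$ via equality of columns of $M$ (equivalently, equality of neighborhoods in $X$).

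It remains to check the homogeneity property on each pair of blocks. Fix indices $i,j$ and take $x,x'\in X_i$ and $y,y'\in Y_j$. Since $x$ and $x'$ belong to the same class $X_i$, their rows agree on every coordinate of $Y$, so in particular $xy\in E$ iff $x'y\in E$. Since $y$ and $y'$ belong to the same class $Y_j$, their columns agree on every coordinate of $X$, so $x'y\in E$ iff $x'y'\in E$. Chaining the two equivalences gives $xy\in E$ iff $x'y'\in E$, which exactly says that the bipartite subgraph induced between $X_i$ and $Y_j$ is either complete or empty, i.e., $(X_i\times Y_j)\cap E$ is either $X_i\times Y_j$ or $\varnothing$.

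There is no real obstacle here; the only point to be careful about is that we work over $\mathbb{F}_2$, so a $k$-dimensional row space really has $2^k$ elements (not $2^{\Theta(k)}$ with hidden constants), and that equality of two rows/columns is precisely equality of the corresponding neighborhoods in $Y$/$X$, which is what couples the purely linear-algebraic rank bound to the combinatorial partition we need.
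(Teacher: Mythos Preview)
Your proof is correct and follows essentially the same approach as the paper's: both rely on the fact that a matrix of $\mathbb{F}_2$-rank at most $k$ has at most $2^k$ distinct rows and at most $2^k$ distinct columns, and both partition $X$ and $Y$ according to equal neighborhoods across the cut. The paper carries this out by fixing an explicit row basis $\{x_1,\dots,x_j\}$ and indexing the classes of $X$ by the basis combination realizing each row, and the classes of $Y$ by their adjacency pattern to the basis; your version skips the basis and directly groups by identical rows/columns, which is a slight streamlining but not a different argument.
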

\begin{proof}
Let $T$ be a tree representation of $G$ of cutrank at most $k$. Let $e$ be an edge of the tree representation of $G$ and $(X,Y)$ be the partition of $V$ induced by $e$. Since 
the cutrank is at most $k$, the matrix $M_{X,Y}$ has rank at most $k$. Hence there exists $j \leq k$ rows $R_1,\ldots, R_j$ which form a base of the rows of the matrix $M_{X,Y}$. By definition, every row corresponds to the neighborhood of a vertex of $X$ into $Y$. Let us denote by $x_i$ the vertex corresponding to $R_i$. We denote by $\mathcal{B}$ the set $\{x_1,\ldots, x_j\}$. \\
For every $\mathcal{B}' \subseteq \mathcal{B}$, $X(\mathcal{B}')$ denotes the subset of $X$ which contains $x$ if $N(x)\cap Y =_{\mathbb{F}_2} \sum_{x_i \in \mathcal{B}'} N(x_i)$. It induces a partition of $X$ since $N(x_1), \ldots, N(x_j)$ is a base of the neighborhoods of $X$ in $Y$. Note that by definition all the vertices of $X(\mathcal{B}')$ have the same neighborhood in $Y$. Observe that a vertex $x \in X(\mathcal{B}')$ is connected to a vertex $y$ iff an odd number of vertices of $\mathcal{B}'$ are connected to $y$. \\ 
For every $\mathcal{B}' \subseteq \mathcal{B}$, $Y(\mathcal{B}')$ is the subset of $Y$ containing $y$ if $N(y) \cap \mathcal{B} = \mathcal{B}'$. It induces a partition of $Y$ into at most $2^j$ sets with the same neighborhood in $\mathcal{B}$.

Let us finally prove that the partitions of $X(\mathcal{B}')_{\mathcal{B}' \subseteq \mathcal{B}}$ and $Y(\mathcal{B}')_{\mathcal{B}' \subseteq \mathcal{B}}$ satisfy the required properties. Let $x,y$ be in $X(\mathcal{B}')\times Y(\mathcal{B}'')$ such that $xy$ is an edge. Since $xy$ is an edge, an odd number of vertices of $\mathcal{B}'$ are connected to $y$. Since all the vertices of $Y(\mathcal{B}'')$ have the same neighborhood in $\mathcal{B}$, all the vertices of  $Y(\mathcal{B}'')$ have an odd number of neighbors on $\mathcal{B}'$. Thus $x$ is connected to all the vertices of $Y(\mathcal{B}'')$. Since all the vertices of $X(\mathcal{B}')$ have the same neighborhood in $Y$, $(X(\mathcal{B}'),Y(\mathcal{B}''))$ forms a complete bipartite graph.
\end{proof}

\begin{lemma}\label{separationtree}
Every ternary tree $T$ with $\alpha>2$ labeled leaves has an edge $e$ such that the partition induced by 
$e$ has at least $\alpha/3$ labeled leaves in both of its two connected components.
\end{lemma}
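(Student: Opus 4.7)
The plan is to argue by contradiction using a sink-finding argument on a natural orientation of $T$. Suppose that for every edge $e$ of $T$, one of the two components of $T-e$ contains strictly fewer than $\alpha/3$ labeled leaves; then the other component contains strictly more than $2\alpha/3$ of them. Note that since $\alpha>2$ we have $2\alpha/3 > 4/3 > 1 > \alpha/3 \cdot 0$, so the two sides are genuinely distinguishable and the ``heavier'' side is well defined for each edge. Orient each edge of $T$ from its lighter side toward its heavier side.

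Because $T$ is a tree, its underlying graph is acyclic, so any orientation of its edges yields a DAG. Every finite DAG possesses a sink, i.e.\ a vertex all of whose incident edges point toward it. Let $v$ be such a sink; I would then argue that the two possible types of $v$ each lead to a contradiction.

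If $v$ is a leaf, its unique incident edge $uv$ is oriented toward $v$, so the $v$-side of $uv$ --- which is just $\{v\}$ and contains exactly one labeled leaf --- would have to carry more than $2\alpha/3$ labeled leaves; since $2\alpha/3 > 4/3 > 1$ when $\alpha > 2$, this is impossible. If $v$ is an internal node, then $v$ has three incident edges and removing $v$ partitions the labeled leaves into three subtrees of sizes $a_1, a_2, a_3$ with $a_1 + a_2 + a_3 = \alpha$. The sink condition on the $i$-th incident edge says the $v$-side has more than $2\alpha/3$ labeled leaves, i.e.\ $\alpha - a_i > 2\alpha/3$, so $a_i < \alpha/3$ for each $i$. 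Summing over $i$ gives $\alpha < \alpha$, a contradiction.

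The only delicate points are (i) making sure a sink exists in the oriented tree (routine, via acyclicity), and (ii) the base-case check that $\alpha>2$ is strong enough to rule out a leaf-sink in the one-leaf case; everything else reduces to an elementary pigeonhole on the three $a_i$. I do not anticipate a real obstacle --- the argument is essentially a ternary-tree centroid lemma, and the ``$\alpha/3$'' threshold is exactly what the three subtrees hanging at an internal vertex dictate.
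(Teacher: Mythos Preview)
Your proposal is correct and takes essentially the same approach as the paper: both orient each edge of $T$ toward the side with more labeled leaves and then locate a sink of this orientation. The paper argues directly (leaves are always sources, so the sink is internal; one of its three subtrees has $\geq \alpha/3$ labeled leaves by pigeonhole, and the orientation forces the complementary side to have $\geq \alpha/2$), whereas you wrap the same sink analysis in a proof by contradiction --- but the key idea is identical. One tiny imprecision: when the sink is a leaf you write that $\{v\}$ ``contains exactly one labeled leaf'', but $v$ could be unlabeled; this does not affect the argument since $0$ and $1$ are both below $2\alpha/3>4/3$.
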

\begin{proof}
Orient every edge of $T$ from the component 
with less labeled leaves to the other one (when equality holds, orient arbitrarily). Observe 
that leaves are sources of this oriented tree. Let $v$ be an internal node of $T$
which is a sink. Consider a component $C$ of $T\setminus v$ with at least $\alpha/3$
labeled leaves. Call $e=vw$ the edge of $T$ inducing the partition $(T\setminus C,C)$. 
Since $e$ is oriented from $w$ to $v$, the component $T\setminus C$ has at least $\alpha/2$
labeled leaves, thus $e$ is the edge we are looking for.
\end{proof}

\begin{theorem}\label{rankwidthborne}
The distance $2$VC-dimension of a graph with rankwidth $k$ is at most $3\cdot 2^{k+1}+2$.
\end{theorem}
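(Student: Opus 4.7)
The plan is a proof by contradiction. Suppose $G$ has rank-width $k$ but there is an induced subgraph $G'$ admitting a $2$-shattered set $X$ of size $\alpha \geq 3 \cdot 2^{k+1}+3$. Since rank-width is inherited by induced subgraphs, $G'$ itself has rank-width at most $k$; I fix a rank decomposition $(T,f)$ of $G'$ of width $\le k$ and label the leaves corresponding to $X$.

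I first apply Lemma \ref{separationtree} to $T$ with its $\alpha>2$ labeled leaves. This produces an edge $e$ of $T$ inducing a bipartition $(V_1,V_2)$ of $V(G')$ with $|X\cap V_1|,|X\cap V_2|\ge \alpha/3 \ge 2^{k+1}+1$. The cut $(V_1,V_2)$ has cutrank at most $k$, so Lemma \ref{patates} yields partitions $V_1=P_1\sqcup\cdots\sqcup P_{2^k}$ and $V_2=Q_1\sqcup\cdots\sqcup Q_{2^k}$ such that between any $P_i$ and $Q_j$ the edges are either all present or all absent. Since $|X\cap V_1|\ge 2\cdot 2^k+1$, pigeonhole gives a part of $V_1$ containing at least three elements of $X$, say $a_1,a_2,a_3$, and symmetrically three elements $b_1,b_2,b_3\in X$ sitting in a single part of $V_2$. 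Denote by $N_a\subseteq V_2$ the common neighborhood of $a_1,a_2,a_3$ in $V_2$ and by $N_b\subseteq V_1$ the common neighborhood of $b_1,b_2,b_3$ in $V_1$.

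The structural engine of the proof is the following twin observation. Consider any ball $B(c,r)$ that contains some $a_i$ but excludes the other two $a_{i'}$; then the shortest $c$--$a_i$ path cannot have its second-to-last vertex in $V_2$, for such a vertex would lie in $N_a$ and thereby be adjacent to all three twins, forcing $a_{i'}\in B(c,d(c,a_i))\subseteq B(c,r)$. Symmetrically, if the ball contains some $b_j$ but excludes the other two, the shortest $c$--$b_j$ path cannot end with an edge coming from $V_1$. Thus every witnessing ball is forced to end its shortest paths ``internally'' on each side.

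The final and hardest step is to combine these constraints across the nine witness balls $B(c_{ij},r_{ij})$ for the pairs $\{a_i,b_j\}$. Each center $c_{ij}$ lies in $V_1$ or $V_2$, so by pigeonhole at least five lie on the same side, say $V_2$; among those five, two occur in the same row or column of the $3\times 3$ grid. Looking at such a pair of balls together, the twin constraints on both sides combine into incompatible requirements on the distance from a fixed center to the corresponding twins, yielding the contradiction. The main obstacle is precisely this last step: the partition from Lemma \ref{patates} controls only adjacency across the cut and not distances inside $V_1$ or $V_2$, so all distance comparisons must be obtained by explicit shortest-path surgery on the twin sides, and the case analysis on where each center $c_{ij}$ sits cannot be avoided. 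Once the contradiction is produced, the bound $\alpha\le 3\cdot 2^{k+1}+2$ follows.
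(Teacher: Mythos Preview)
Your setup through the twin observation is correct: the balanced edge of the tree exists by Lemma~\ref{separationtree}, the partition from Lemma~\ref{patates} gives you three pairwise ``twins'' $a_1,a_2,a_3$ in one part of $V_1$ and $b_1,b_2,b_3$ in one part of $V_2$, and it is true that a shortest $c$--$a_i$ path whose penultimate vertex lies in $V_2$ would drag the other $a_{i'}$ into the ball. The gap is the final step. Knowing only that two centers $c_{ij},c_{ij'}$ (or $c_{ij},c_{i'j}$) lie on the same side, together with your last-edge constraint, does not produce a contradiction: the last-edge condition tells you nothing about where the shortest paths \emph{cross} the cut, and the two crossing vertices can perfectly well sit in different classes of the Lemma~\ref{patates} partition, so no twin argument applies between them. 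No amount of case analysis on two balls closes this, because Lemma~\ref{patates} controls adjacency across the cut only for vertices in the \emph{same} class, and you have no mechanism forcing two crossing points into one class.

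The paper's argument fixes exactly this. Instead of extracting three twins on each side, it keeps all $2^{k+1}+1$ labelled vertices on each side and first shows (by orienting the complete bipartite graph according to which side the center lies on, and averaging) that some fixed $x_i$ has at least $2^{k}+1$ balls $B_{i,j}$ with centres in $V_1$. For each such $j$ take a shortest path from the centre to $y_j\in V_2$ and let $z_j$ be its last vertex in $V_1$. Now the pigeonhole is applied to the \emph{crossing vertices} $z_j$: there are $2^{k}+1$ of them and only $2^{k}$ classes, so two of them, say $z_1,z_2$, are twins across the cut. If $d(z_1,y_1)\le d(z_2,y_2)$, then $z_2$ is adjacent to the successor of $z_1$ (twin property), whence $d(z_2,y_1)\le d(z_2,y_2)$; combined with Lemma~\ref{ballcontain} this puts $y_1$ inside $B_{i,2}$, a contradiction. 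The point you are missing is that the pigeonhole has to land on the crossing points of the paths, which requires $2^{k}+1$ balls sharing a fixed $x_i$ with centres on the same side, not merely two balls in the same row or column of a $3\times 3$ array.
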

\begin{proof} 
Assume by contradiction that the $B$-hypergraph of a graph $G$ of rankwidth $k$ admits a $2$-shattered set 
$S$ of size $3(2^{k+1}+1)$. Let $(T,f)$ be a tree decomposition of $G$ achieving rankwidth $k$. By 
Lemma~\ref{separationtree}, there is an edge $e$ of $T$ such that the partition induced 
by $e$ has at least $2^{k+1}+1$ vertices of $S$ in both connected components. Let $V_1,V_2$ 
(resp. $X,Y$) be the partition of $V$ (resp. $S$) induced by $e$. Let $x_1,\ldots,x_{2^{k+1}+1}$ 
and $y_1,\ldots,y_{2^{k+1}+1}$ be distinct vertices of $X$ and $Y$ respectively.

Since $S$ is $2$-shattered, for each $(x_i,y_j)  \in X \times Y$, there is a ball \emph{$B_{i,j}$} 
such that $B_{i,j} \cap S =\{x_i,y_j\}$ where $B_{i,j}$ is chosen with minimum radius.

\begin{claim}\label{rankchoix}
One of the following holds:
\begin{itemize}
\item There is an $i$ such that at least $2^k+1$ balls $B_{i,j}$ have their centers in $V_1$.
\item There is a $j$ such that at least $2^k+1$ balls $B_{i,j}$ have their centers in $V_2$.
\end{itemize}
\end{claim}
\begin{proof}
Orient the edges of the complete bipartite graph with vertex set $X \cup Y$ such that $x_i\rightarrow y_j$ if $B_{i,j}$ 
has its center in $V_1$ and $x_i\leftarrow y_j$ otherwise. The average out-degree of the vertices of $X \cup Y$ is $2^k+\frac 12$.
So a vertex has out-degree at least $2^k+1$. \\
Assume that the vertex $x_i \in X$ has out-degree at least $2^k+1$. There exist $2^k+1$ vertices of $Y$, w.l.o.g. $y_1,\ldots,y_{2^k+1}$, such that
$x_iy_1,\ldots,x_iy_{2^k+1}$ are arcs. So the balls $B_{i,j}$ have their centers in $V_1$
for all $j \in \{ 1,\ldots,2^k+1\}$, and then the first point holds.
If a vertex of $Y$ has out-degree at least $2^k+1$, a symmetric argument ensures that the second point holds, which achieves the proof.
\end{proof}

By Claim \ref{rankchoix}, we can assume without loss of generality that 
$B(1,1),B(1,2), \ldots, B(1,2^k+1)$ have their centers in $V_1$. We denote 
by $c_i$ and $r_i$ respectively the center and the radius of $B(1,i)$ and by $P_i$ a minimum $c_iy_i$-path. By 
the pigeonhole principle, two $P_i$'s leave $V_1$ 
by the same set of vertices given by the partition of Lemma~\ref{patates}. 
Without loss of generality, we assume that these paths are $P_1$ and $P_2$ and we denote by $z_1$ and $z_2$ respectively their last vertices in $V_1$. 
We finally assume that $d(z_1,y_1) \leq d(z_2,y_2)$. 
By Lemma~\ref{ballcontain}, the ball $B(z_2,d(z_2,y_2))$ is included in $B(c_2,r_2)$ since $z_2$ 
is on a minimum path from $c_2$ to $y_2$. Let $z_1z'_1$ be the first edge of 
$P_1$ between $z_1$ and $y_1$ (hence $z'_1$ belongs to $Y$). By 
Lemma~\ref{patates}, $z'_1$ is also a neighbor of $z_2$ since $z_1$ and 
$z_2$ have the same neighborhood in $Y$. Thus $y_1 \in B(z_2,d(z_2,y_2))$. Thus 
$y_1 \in B(z_2,d(z_2,y_2))$ which contradicts the hypothesis.
\end{proof}

Since the rankwidth is equivalent, up to an exponential function, to the cliquewidth~\cite{OumSeymour06}, Theorem~\ref{rankwidthborne} implies that every class of graphs with bounded clique-width has bounded distance $2$VC-dimension.

\subsection{Unbounded distance $2$VC-dimension with bounded distance VC-dimension}
\begin{theorem}\label{unbounded2VC}
Let $n,\ell$ be two integers. There exists a graph $G_{n,\ell}$ of distance VC-dimension at most $18$ such that the $2$VC-dimension of the $B_\ell$-hypergraph of $G_{n,\ell}$ is at least $n$.
\end{theorem}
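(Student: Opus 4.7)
My plan is to take $G_{n,\ell}$ to be the $(2\ell-1)$-subdivision of the complete graph $K_n$. Concretely, introduce terminals $X = \{x_1, \ldots, x_n\}$ and, for each unordered pair $\{i,j\}$, an internally-disjoint path $P_{ij}$ of length $2\ell$ from $x_i$ to $x_j$, whose midpoint I denote $c_{ij}$. A direct distance computation gives $d(c_{ij}, x_k) = \ell$ when $k \in \{i, j\}$ and $d(c_{ij}, x_k) = 3\ell$ otherwise (any shortest path from $c_{ij}$ to a third terminal $x_k$ must first reach $x_i$ or $x_j$ and then traverse a different subdivided edge). Hence $B(c_{ij}, \ell) \cap X = \{x_i, x_j\}$, and the terminal set $X$ is $2$-shattered in the $B_\ell$-hypergraph of $G_{n,\ell}$, giving the desired lower bound on the $2$VC-dimension.

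The harder part is the upper bound on the distance VC-dimension. I would argue that for every induced subgraph $G' \subseteq G_{n,\ell}$ the VC-dimension of the $B$-hypergraph of $G'$ is bounded by an absolute constant. The structural fact to exploit is that $G'$ is itself a subdivision of a subgraph of $K_n$ with some interior path vertices possibly removed, so each surviving $P_{ij}$ breaks into a disjoint union of path-fragments. In $G_{n,\ell}$ itself I would first show that the trace of a ball $B(v,r)$ on a candidate shattered set $S$ is determined by a bounded amount of local data around $v$: its type (terminal, center, or interior vertex on a specific path), the pair of terminals at the endpoints of the path containing $v$, and the radius $r$ sorted into $O(1)$ relevant ranges (small enough to stay inside that path, just large enough to reach one or both of its neighbouring terminals, or large enough to reach terminals beyond). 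Enumerating these configurations yields only polynomially many distinct traces on $S$, which forces the VC-dimension to be bounded by a constant via the usual Sauer-type inequality $2^{|S|} \le \text{(number of possible traces)}$.

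Extending this enumeration to induced subgraphs $G'$ is the main obstacle. Deleting interior vertices of some $P_{ij}$ can reroute shortest paths through other centers and create new trace patterns that do not occur in $G_{n,\ell}$ itself. I would handle this by observing that in $G'$ each ball is still centered in one of the surviving fragments, terminals, or centers, and that any rerouting uses only a bounded local structure (one or two alternative centers through a terminal hub) to reach distant terminals. A careful case analysis bucketing each ball by the type of its center and by the small collection of fragment endpoints it reaches bounds the number of possible traces on $S$ by a polynomial in $|S|$, and the constant $18$ is extracted as the numerical conclusion of this enumeration. The delicate point, and the main source of the specific value $18$, is verifying that the partial disconnections introduced by restricting to an induced subgraph do not enable shattered configurations larger than this bound; this should follow from the fact that the terminal-to-center routing structure is preserved, only with longer effective distances.
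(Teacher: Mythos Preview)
Your construction is not the paper's construction, and the difference matters. The paper's $G_{n,\ell}$ keeps the clique on the terminal set $X=\{x_1,\dots,x_n\}$ \emph{in addition to} the long paths of length $2\ell$; your $(2\ell-1)$-subdivision of $K_n$ drops those clique edges. Both constructions give the $2$VC lower bound by the same midpoint argument, but the clique is exactly what makes the upper bound on the distance VC-dimension go through in the paper. The paper's argument works as follows: from a hypothetical shattered set of size $19$ one extracts (via a short pigeonhole) ten vertices lying on pairwise distinct long paths, assigns to each its ``nearest terminal'' in $X$, and then uses Lemma~\ref{ballcontain} together with inequalities of the form $d(y,z_3)\le 1+d(z_3,X)$ for terminals $y\neq x$ to force any ball containing two of the $z_i$ to also contain a third. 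That ``$+1$'' is the clique edge; without it the corresponding bound becomes $d(y,z_3)\le 2\ell+d(z_3,X)$, which is useless, and the whole comparison collapses.

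Independently of the construction issue, your proposed proof of the upper bound is not a proof but a programme. You describe a trace-counting scheme and then write that extending it to induced subgraphs is ``the main obstacle'', proposing to handle it by ``a careful case analysis'' that you do not perform. This is precisely the content of the theorem: the distance VC-dimension is defined as a maximum over \emph{all} induced subgraphs, and in the pure subdivision, deleting interior vertices of some $P_{ij}$ can push $d(x_i,x_j)$ from $2\ell$ up to $4\ell$, $6\ell$, or $+\infty$, rerouting shortest paths through arbitrarily many other terminals. Your sketch gives no mechanism for controlling the number of distinct ball-traces in such subgraphs uniformly in $n$, and no derivation of the constant $18$. In the paper's graph the clique pins all terminal--terminal distances to $1$ in every induced subgraph containing the relevant terminals, which is what makes the case analysis finite and $n$-free. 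If you want to salvage your approach, add the clique back; then either run the paper's direct shattered-set argument, or make your trace-counting precise by showing that any ball's trace on a candidate set $S$ is determined by its nearest terminal and $O(1)$ threshold comparisons, uniformly over induced subgraphs.
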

\begin{proof}
\begin{figure}
\centering
\includegraphics[scale=0.9]{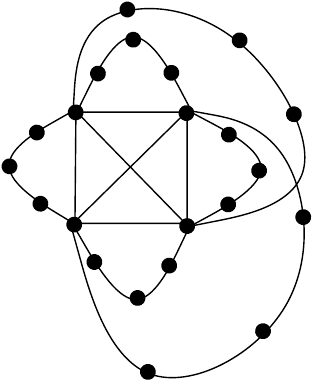}
\caption{The graph $G_{n,\ell}$ of Theorem~\ref{unbounded2VC} with $n=4$ and $\ell=2$. The vertices of the central clique are the vertices of $X$, the others are the vertices of $Y$.}
\label{figcounterexample2vc}
\end{figure}
The following construction is illustrated on Figure~\ref{figcounterexample2vc}. The graph $G_{n,\ell}$ has vertex set $X \cup Y$. The set $X$ contains $n$ vertices denoted by $(x_i)_{1 \leq i \leq n}$ and $Y$ is a set of $(2\ell-1){ n \choose 2}$ vertices denoted by $y_k^{i,j}$ where $1 \leq k \leq 2\ell-1$ and $1 \leq i <j \leq n$. The graph restricted to $X$ is a clique. The graph restricted to $Y$ is a disjoint union of ${n \choose 2}$ induced paths on $2\ell-1$ vertices (whose endpoints will be connected to vertices of $X$). More formally, for every $1 \leq i <j \leq n$ and $k \leq 2\ell-1$, the neighbors of the vertex $y_k^{i,j}$ are the vertices $y_{k-1}^{i,j}$ and $y_{k+1}^{i,j}$ where $y_0^{i,j}$ is $x_i$ and $y_{2\ell}^{i,j}$ is $x_j$. For every $i < j$, the path $x_i, y_1^{i,j}, y_2^{i,j}, \ldots,  y_{2\ell-1}^{i,j}, x_j$ is called the \emph{long path between $x_i$ and $x_j$}. 

The $2$VC-dimension of the $B_\ell$-hypergraph of $G_{n,\ell}$ is at least $n$. Indeed the set $X$ is $2$-shattered since for every $x_i,x_j \in X$, we have $B(y_\ell^{i,j},\ell) \cap X = \{ x_i,x_j \}$. \\
The remaining of the proof consists in showing that the distance VC-dimension of $G_{n,\ell}$ is at most $18$.
Consider an induced subgraph of $G_{n,\ell}$. The remaining vertices of $X$ are in the same connected component since $X$ is a clique. Connected components with no vertices of $X$ form induced paths and then have distance VC-dimension at most two by Theorem~\ref{kdminor}. Thus, by Observation~\ref{obs:connectedgraphs}, Theorem~\ref{unbounded2VC} holds if it holds for the connected component of $X$. 

\begin{claim}\label{no3longpath}
A shattered set of size at least four has at most two vertices on each long path.
\end{claim}
\begin{proof}
Let $z_1, z_2, z_3$ be three vertices which appear in this order on the same long path $P$ and $z_4$ be a vertex which is not between $z_1$ and $z_3$ on $P$. By construction, every path between $z_2$ and $z_4$ intersects either $z_1$ or $z_3$. So no pair $z,p \in V \times \mathbb{N}$ satisfy $B(z,p) \cap X = \{ z_2 , z_4 \}$, \emph{i.e.} $\{ z_1,z_2,z_3,z_4 \}$ is not shattered.
\end{proof}

Let $Z'$ be a shattered set of size at least $19$. By Claim~\ref{no3longpath}, we can extract from $Z'$ a set $Z$ of size $10$ such that vertices of $Z$ are in pairwise distinct long paths. For every vertex $z_i \in Z$, a \emph{nearest neighbor} on $X$ is a vertex $x$ of $X$ such that $d(x,z_i)$ is minimum. Each vertex has at most two nearest neighbors which are the endpoints of the long path containing $z_i$.

First assume $z_1, z_2, z_3$ in $Z$ have a common nearest neighbor $x$, \emph{i.e.} they are on long paths containing $x$ as endpoint. Without loss of generality $d(z_3,X)$ is minimum. Let $z,p$ be such that $\{ z_1, z_2 \} \subseteq  B(z,p)$. Since $z_1$ and $z_2$ are not in the same long path, free to exchange $z_1$ and $z_2$, a minimum $zz_2$-path passes through a vertex $y$ of $X$. If $y=x$, then $B(z,p)$ contains $B(x,d(x,z_2))$ by Lemma~\ref{ballcontain}, and then contains $z_3$ since $d(x,z_2) \geq d(x,z_3)$. Otherwise up to symmetry $y$ is not an endpoint of the long path containing $z_2$. Indeed the second endpoint of the long path containing $z_1$ and the second endpoint of the long path containing $z_2$ are distinct. Otherwise $z_1,z_2$ would be in the same long path since there is a unique long path between every pair of vertices of $X$. Hence a minimum path from $y$ to $z_2$ is at least $d(z_2,X)+1$. In addition a minimum path between $y$ and $z_3$ has length at most $1+d(z_3,X)$. So $d(y,z_2) \geq d(y,z_3)$. So $z_3$ is in $B(z,p)$ and $\{z_1,z_2,z_3\}$ cannot be shattered.

So each vertex of $Z$ has at most two nearest neighbors in $X$ and each vertex of $X$ is the nearest neighbor of at most two vertices of $Z$. Thus every $z \in Z$ share a common nearest neighbor with at most two vertices of $Z$. Since $|Z| \geq 10$, at least four vertices $z_1,z_2,z_3,z_4$ of $Z$ have distinct nearest neighbors. Assume w.l.o.g. that $d(z_4,X)$ is minimum.

Let $z,p \in V \times \mathbb{N}$ be such that $B(z,p)$ contains $z_1,z_2,z_3$. Let $x_1,x_2$ be the endpoints of the long path containing $z$ (if $z \in X$ we consider that $x_1=x_2=z$). Since nearest neighbors of $z_1,z_2,z_3$ are pairwise disjoint, we can assume w.l.o.g. that the nearest neighbors of $z_3$ are distinct from $x_1$ and from $x_2$. So a minimum path from $z$ to $z_3$ passes through $x_1$ or $x_2$ and we have $d(x_1,z_3) \geq d(x_1,z_4)$ and $d(x_2,z_3) \geq d(x_2,z_4)$. By Lemma~\ref{ballcontain}, $B(z,p)$ also contains $z_4$, \emph{i.e.} $Z$ cannot be not shattered. 
\end{proof}
Note that we did not make any attempt to exactly evaluate the distance VC-dimension of the graph $G_{n,\ell}$.

\section{Erd\H{o}s-P\'osa property}\label{VC}
 Recall that $\nu_{\ell}$ and $\tau_{\ell}$ respectively denote the packing number and the transversality of the $B_\ell$-hypergraph of $G$. Chepoi, Estellon and Vax\`es proved in \cite{ChepoiEV07} that there is a constant  $c$ such that for all $\ell$, every planar graph $G$ of diameter $2 \ell$ can be covered by $c$ balls of radius $\ell$. It means that planar graphs of diameter $2 \ell$ satisfy $\tau_{\ell} \leq f(\nu_{\ell})$ since two balls of radius $\ell$ necessarily intersect. 
 They conjectured that there exists a linear function $f$ such that for every $\ell$ and every planar graph we have $\tau_{\ell} \leq f(\nu_{\ell})$. The following result due to Ding, Seymour and Winkler \cite{DingSW94} ensures that a polynomial function $f$ exists for any class of graphs of bounded distance $2$VC-dimension.
\begin{theorem}{(Ding, Seymour, Winkler \cite{DingSW94})}\label{ding}
Each hypergraph of dual $2$VC-dimension $d$ satisfies,
$$ \tau \leq 11 \cdot d^2 \cdot (d+ \nu + 3)\cdot {d+\nu \choose d}^2$$
\end{theorem}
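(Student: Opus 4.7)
The plan is to argue by contradiction. Suppose $\tau$ exceeds the stated bound; I will produce $d+1$ hyperedges that are pairwise separated by a private vertex, contradicting the dual $2$VC-dimension hypothesis. The overall structure will be a double counting of hyperedge \emph{patterns} combined with pigeonhole on a minimum hitting set.

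First I would fix a maximum packing $\mathcal{M}=\{e_1,\ldots,e_\nu\}$ of pairwise disjoint hyperedges. Every remaining hyperedge $f$ must intersect $\mathcal{M}$, so it carries a non-empty \emph{trace} $\sigma(f)=\{i : f\cap e_i\neq\varnothing\}$. A Sauer--Shelah-style bound adapted to $2$-shattering in the dual hypergraph then limits the number of realized traces to at most $\binom{d+\nu}{d}$: otherwise, a short selection of hyperedges with many different traces would supply, via their pairwise symmetric differences, a $2$-shattered dual family of size $d+1$. A second partitioning inside each trace class --- for instance by how each hyperedge meets a refined "inner" structure of some $e_i$ together with the current hitting set --- would produce another $\binom{d+\nu}{d}$ factor, explaining the squared binomial in the stated bound.

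The main step is then to make pigeonhole work: a minimum hitting set $T$ of size greater than $11\, d^2(d+\nu+3)\binom{d+\nu}{d}^2$ must meet a single refined class in more than $11\, d^2(d+\nu+3)$ hyperedges, and this surplus has to be converted into an explicit $(d+1)$-element dual $2$-shattered family. This final extraction is the hard part and accounts for the overhead $11\, d^2(d+\nu+3)$: one would select hyperedges one at a time, requiring at each step a pool large enough to guarantee both a new hyperedge avoiding the "shared vertices" accumulated so far and a witness vertex separating every newly formed pair from the rest --- a Ramsey-like procedure with iterated pigeonhole at each level, each level costing a multiplicative factor roughly $d$ or $d+\nu$. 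A secondary subtlety is that the Sauer--Shelah bound for $2$-shattering is more delicate than its classical VC-dimension counterpart, so each of the two $\binom{d+\nu}{d}$ applications must be verified directly rather than quoted as a black box; this verification, together with the iterative extraction, is where I expect the bulk of the technical work and where the explicit constants in the theorem are generated.
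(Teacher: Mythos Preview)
This theorem is not proved in the paper at all: it is quoted verbatim from Ding, Seymour and Winkler~\cite{DingSW94} and used as a black box to obtain Corollary~\ref{erdosposa2VC}. So there is no ``paper's own proof'' to compare your proposal against; the authors simply cite the result and apply it.

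As for your sketch itself, it is in the right spirit but it is not yet a proof. The actual Ding--Seymour--Winkler argument does start from a maximum packing and studies how the remaining hyperedges meet it, but the two $\binom{d+\nu}{d}$ factors do not arise from two separate Sauer--Shelah applications in the way you describe. Your ``trace'' step is fine as intuition, but the claim that a Sauer--Shelah-type bound for $2$-shattering in the dual limits the number of traces to $\binom{d+\nu}{d}$ is asserted, not argued, and the standard Sauer lemma does not directly give this. The ``second partitioning inside each trace class'' and the ``Ramsey-like iterative extraction'' are placeholders rather than arguments: you do not say what the refined inner structure is, nor why the surplus $11\,d^2(d+\nu+3)$ suffices to extract a dual $2$-shattered $(d{+}1)$-set. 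If you want to reconstruct the proof, you should read~\cite{DingSW94} directly; the combinatorics there (based on a careful analysis of how hyperedges intersect a maximum matching and a sunflower-type extraction) is more specific than what your outline suggests, and the constants come from that analysis rather than from two generic Sauer bounds.
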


\begin{corollary}\label{erdosposa2VC}
Let $d$ be an integer. For every graph $G \in \mathcal{G}$ and every integer $\ell$, if the distance $2$VC-dimension of $G$ is at most $d$, then
$$ \tau_{\ell} \leq 11 \cdot d^2 \cdot (d+ \nu_{\ell} + 3)\cdot {d+\nu_{\ell} \choose d}^2$$

\end{corollary}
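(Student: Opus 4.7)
The plan is to derive this corollary as a direct consequence of Theorem~\ref{ding} applied to the $B_\ell$-hypergraph of $G$. The only real work is to certify that the hypothesis of Theorem~\ref{ding} — namely a bound on the \emph{dual} $2$VC-dimension — is met under our assumption on the distance $2$VC-dimension of $G$.

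First I would observe that the $2$VC-dimension of the $B_\ell$-hypergraph of $G$ is at most the distance $2$VC-dimension of $G$: by definition the latter is the maximum, over all induced subgraphs $G'$ of $G$, of the $2$VC-dimension of the $B$-hypergraph of $G'$, and in particular it upper bounds the $2$VC-dimension of the sub-hypergraph of $B$-hyperedges of $G$ obtained by keeping only balls of radius exactly $\ell$. So the $2$VC-dimension of the $B_\ell$-hypergraph is at most $d$.

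Next I would invoke Observation~\ref{obs:Blisomdual}, which says that the $B_\ell$-hypergraph is isomorphic to its own dual (because $x \in B(y,\ell)$ iff $y \in B(x,\ell)$). Under this isomorphism $2$-shattered sets on one side correspond bijectively to $2$-shattered sets on the other side, so the dual $2$VC-dimension of the $B_\ell$-hypergraph equals its $2$VC-dimension, and is therefore at most $d$ as well.

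Finally I would plug this bound directly into Theorem~\ref{ding} with the $B_\ell$-hypergraph of $G$ as input, using $\nu = \nu_\ell$ and $\tau = \tau_\ell$, to obtain the stated inequality. I do not expect any genuine obstacle here: once the self-duality of the $B_\ell$-hypergraph is recognized, the corollary is an essentially one-line deduction from Theorem~\ref{ding}, and no further combinatorial argument is required.
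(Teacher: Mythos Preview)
Your proposal is correct and follows essentially the same route as the paper: bound the $2$VC-dimension of the $B_\ell$-hypergraph by $d$ via the sub-hypergraph inclusion into the $B$-hypergraph, invoke Observation~\ref{obs:Blisomdual} to transfer this to the dual $2$VC-dimension, and then apply Theorem~\ref{ding}. The only cosmetic difference is the order in which you state the two observations.
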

\begin{proof}
Let $G$ be a graph. Observation~\ref{obs:Blisomdual} ensures that the $B_\ell$-hypergraph of $G$ is isomorphic to its dual hypergraph. The $B_\ell$-hypergraph of $G$ is a sub-hypergraph (in the sense of hyperedges) of the $B$-hypergraph of $G$. Hence the dual $2$VC-dimension of the $B_\ell$-hypergraph of $G$ is at most $d$ and then Theorem~\ref{ding} can be applied.
\end{proof}
Theorem~\ref{kdminor},~\ref{rankwidthborne} and Corollary~\ref{erdosposa2VC} ensure that $B_\ell$-hypergraphs of $K_n$-minor free graphs and of bounded rankwidth graphs have the Erd\H{o}s-P\'osa property. Note that the gap function is a polynomial function when the $2$VC-dimension is fixed constant. In particular, Corollary~\ref{erdosposa2VC} implies that every planar graph of diameter $2\ell$ has a dominating set at distance $\ell$ of size $35200$ ($\nu_\ell=1$, $d=4$).
Since Theorem~\ref{unbounded2VC} ensures that there are some graphs with bounded distance VC-dimension and unbounded distance $2$VC-dimension, Corollary~\ref{erdosposa2VC} raises a natural question. Does the same hold for graphs of bounded distance VC-dimension? The remaining of this section is devoted to answering this question.

\begin{theorem}\label{erdpo}
There exists a function $f$ such that, for every $\ell$, every graph of distance VC-dimension $d$ can be covered by $f(\nu_{\ell},d)$ balls of radius $\ell$, \emph{i.e.} $\tau_{\ell} \leq f(\nu_{\ell},d)$.
\end{theorem}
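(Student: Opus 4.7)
My plan is to adapt the strategy used by Chepoi, Estellon and Vax\`es for planar graphs~\cite{ChepoiEV07}, replacing their planarity-specific arguments by purely combinatorial ones that only invoke the bound on the distance VC-dimension. The resulting gap function $f(\nu_\ell, d)$ will be exponential in $d$, which is worse than the polynomial bound of Corollary~\ref{erdosposa2VC}: the discrepancy is unavoidable with this approach precisely because we no longer have a bound on the distance $2$VC-dimension with which to invoke Theorem~\ref{ding}.

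The first step is to fix a maximum packing $\mathcal{P} = \{B(c_1, \ell), \ldots, B(c_\nu, \ell)\}$ of $\nu = \nu_\ell$ pairwise disjoint balls and to put $c_1, \ldots, c_\nu$ in the initial hitting set $H_0$. By maximality, the center of every ball we still have to hit is at distance at most $2\ell$ from some $c_i$, so every vertex of $V$ lies at distance at most $2\ell$ from $\{c_1, \ldots, c_\nu\}$ and the problem localises to $\nu$ thick ``cylinders'' around the centers $c_i$. This will convert the global problem into $\nu$ more constrained local subproblems and, in the final accounting, produce the polynomial-in-$\nu_\ell$ dependence of $f$.

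The second step is to enlarge $H_0$ by a Chepoi--Estellon--Vax\`es-type greedy loop: while some ball $B(y, \ell)$ is not yet hit, pick a shortest $y$-$c_i$ path for an index $i$ with $d(y, c_i) \leq 2\ell$, and insert a carefully chosen intermediate vertex into the hitting set. In~\cite{ChepoiEV07} the number of iterations is bounded using topological properties of planar BFS trees; here the analogous bound must be established purely by a VC-dimension argument, yielding a function $g(\nu, d)$ of iterations whose product with $\nu$ gives the final bound.

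The main obstacle is precisely this termination argument. I plan to show that if the greedy loop runs for more than a threshold $g(\nu, d)$, then the sequence of witness centers $y_1, y_2, \ldots$ together with the associated intermediate vertices on their shortest paths spans an induced subgraph in which one can exhibit a shattered set of size $d+1$, contradicting the distance VC-dimension bound. Realising all $2^{d+1}$ subsets of a candidate shattered set (as opposed to only the $\binom{d+1}{2}$ pair-patterns that suffice for Corollary~\ref{erdosposa2VC}) is the delicate step, and I expect to handle it by a binary recursive refinement of the witness sequence: at each level, split the current list into two sublists distinguished by some ball, recurse with VC-dimension budget $d-1$ on each sublist, and glue the two sub-shattered sets into a shattered set of one larger size. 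The $d$ levels of binary branching are what produce the $2^{O(d)}$ factor of $f$, so that combined with the initial localisation around the $\nu$ packing centers we obtain the claimed bound $f(\nu_\ell, d) = 2^{O(d)} \cdot \mathrm{poly}(\nu_\ell)$.
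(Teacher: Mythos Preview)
Your plan diverges substantially from the paper's route, and the part you leave as ``the main obstacle'' is in fact the entire content of the theorem. The paper does not run a greedy hitting-set loop at all. Instead it reduces immediately to Matou\v{s}ek's $(p,q)$-theorem (Theorem~\ref{Matou}): since the $B_\ell$-hypergraph is self-dual, it has dual VC-dimension at most $d$, so it suffices to prove that for some $p=p(\nu_\ell,d)$ the hypergraph has the $(p,d+1)$-property, i.e.\ that every $d$-sparse set of balls of radius $\ell$ has size at most $p$. The bulk of Section~\ref{VC} (Lemmas~\ref{chemincourtlong}, \ref{depol}, \ref{sgclique}, \ref{finiz}) is devoted to this bound, via a chain of Ramsey-type extractions that eventually produce an \emph{$\ell$-disconnecting} family of cut-sets (Theorem~\ref{bigvc}); deleting the right combination of these cut-sets realises every trace on a candidate set $A$, which is how one gets all $2^{|A|}$ patterns rather than only the $\binom{|A|}{2}$ pair-patterns.

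Your proposed ``binary recursive refinement'' does not supply this mechanism, and as stated it has a genuine gap. Suppose at some stage you have two sublists with shattered sets $S_1$ in an induced subgraph $G_1$ and $S_2$ in an induced subgraph $G_2$. To glue them into a shattered set $S_1\cup S_2$ (or $S_1\cup S_2\cup\{x\}$) you need a \emph{single} induced subgraph $G'$ and, for every subset $T\subseteq S_1\cup S_2$, a single ball whose trace on $S_1\cup S_2$ is exactly $T$. The balls that realise traces on $S_1$ inside $G_1$ may hit $S_2$ arbitrarily (and vice versa), and passing to a common induced subgraph can destroy either shattering. Nothing in a greedy witness sequence along shortest paths to packing centres gives you the independence needed to decouple these interactions; this is precisely why the paper introduces the heavy machinery of interference matrices, independent pairs, free sections and the escape property before it can exhibit disconnecting cut-sets. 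If you want to pursue a direct approach, you will need an analogue of Theorem~\ref{bigvc}: a structural object extractable from many unhit balls whose presence forces an $\ell$-disconnectable pair $A,B$ with $|B|\ge 2^{|A|}$. Without that, the termination bound is unproved.
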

Our proof is based on a result of Matou\v{s}ek linking $(p,q)$-property and Erd\H{o}s-P\'osa property~\cite{Matousek04} (Chepoi, Estellon and Vax\`es use this method in their paper). Nevertheless our proof is more technically involved since we cannot use topological properties as for planar graphs in~\cite{ChepoiEV07}. A hypergraph has the \emph{$(p,q)$-property} if for every set of $p$ hyperedges, $q$ of them have a non-empty intersection, \emph{i.e.} there is a vertex $v$ in at least $q$ of the $p$ hyperedges. The following result, due to Matou\v{s}ek \cite{Matousek04}, generalizes a result of Alon and Kleitman \cite{AlonK92}.

\begin{theorem}{(Matou\v{s}ek \cite{Matousek04})}\label{Matou}
There exists a function $f$ such that every hypergraph $H$ of dual VC-dimension $d$ satisfying the $(p,d+1)$-property satisfies 
$$\tau(H) \leq f(p,d)$$
\end{theorem}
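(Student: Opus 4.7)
The plan is to apply Matou\v{s}ek's $(p,q)$-theorem (Theorem~\ref{Matou}) to the $B_\ell$-hypergraph of $G$. Two observations are immediate: since this hypergraph is a subhypergraph (in the sense of hyperedges) of the $B$-hypergraph, its VC-dimension is at most $d$; and by self-duality (Observation~\ref{obs:Blisomdual}), the same bound holds for its dual VC-dimension. Theorem~\ref{Matou} then supplies $\tau_\ell \leq f(p,d)$ as soon as the $(p, d+1)$-property is verified for some $p = p(\nu_\ell, d)$, i.e.\ as soon as one shows that any $p$ balls of radius $\ell$ contain $d+1$ sharing a common vertex.

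Establishing this $(p, d+1)$-property is the combinatorial heart of the argument, and I would approach it in two stages. First, a Ramsey reduction: the intersection graph on any collection of balls of radius $\ell$ has independence number at most $\nu_\ell$ by hypothesis, so for $p$ beyond the Ramsey threshold $R(\nu_\ell + 1, k)$ we may assume we have $k$ pairwise intersecting balls, with $k = k(d)$ as large as required. Second, a VC-dimensional promotion step: from $k$ pairwise intersecting balls extract $d+1$ that share a common vertex. For this second step, I would choose, for each intersecting pair $(B_i,B_j)$, a witness vertex $v_{ij}\in B_i\cap B_j$, producing a witness set $W$ of size at most $\binom{k}{2}$ on which every ball contains at least $k-1$ points. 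An iterated pigeonhole on the trace of balls over $W$, controlled by Sauer--Shelah (the trace has at most $O(|W|^d)$ distinct patterns, by self-duality of the VC-dimension bound), produces a nested chain of sub-families of balls whose minimum common intersection strictly grows at each step, terminating either in a vertex lying in $d+1$ of them (the desired conclusion) or, if the promotion ever fails, in $\nu_\ell + 1$ pairwise disjoint balls, contradicting the packing assumption.

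The main obstacle is precisely this promotion step: this is where Chepoi, Estellon, and Vax\`es~\cite{ChepoiEV07} exploited planarity to produce, out of pairwise witnesses, a globally coherent common vertex. Replacing the topological shortcut by a combinatorial iteration is delicate because one must maintain at each refinement both that the surviving balls remain pairwise intersecting and that the induced witness sub-structure still has VC-dimension at most $d$, so that Sauer--Shelah can be reapplied. The multiplicative polynomial blow-up incurred at each of the $O(d)$ refinements is what makes the final bound $f(\nu_\ell, d)$ exponential in $d$, consistent with the exponential-versus-polynomial gap flagged in the introduction.
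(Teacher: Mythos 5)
The statement you were asked to prove is Theorem~\ref{Matou} itself, i.e.\ Matou\v{s}ek's $(p,q)$-theorem for hypergraphs of bounded dual VC-dimension. Your proposal does not prove it: its very first step is ``apply Matou\v{s}ek's $(p,q)$-theorem (Theorem~\ref{Matou})'', so the argument is circular with respect to the statement under consideration. What you have sketched instead is a strategy for the \emph{application} of that theorem, namely the Erd\H{o}s--P\'osa result (Theorem~\ref{erdpo}): reduce to verifying the $(p,d+1)$-property for the $B_\ell$-hypergraph and then invoke Theorem~\ref{Matou}. That reduction is indeed how the paper proceeds after stating Theorem~\ref{Matou}, but it is not a proof of Theorem~\ref{Matou}; note also that the paper itself does not prove this theorem, it imports it from \cite{Matousek04} as a black box, so an actual proof would have to reproduce Matou\v{s}ek's argument: bounded dual VC-dimension bounds the dual shatter function (Sauer--Shelah), this yields a fractional Helly theorem for the hyperedges, and the Alon--Kleitman linear-programming/weak $\varepsilon$-net machinery \cite{AlonK92} then converts the $(p,d+1)$-property into a transversal of size bounded by a function of $p$ and $d$. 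None of these ingredients appears in your proposal.

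Even read charitably as a sketch for Theorem~\ref{erdpo}, the combinatorial core is not established. Your ``promotion step'' --- pick pairwise witnesses $v_{ij}\in B_i\cap B_j$ and run an iterated pigeonhole/Sauer--Shelah refinement whose ``minimum common intersection strictly grows'' --- is asserted, not proved: nothing forces the common intersection of a refined subfamily to grow, and the claimed dichotomy (either $d+1$ balls through a common vertex or $\nu_\ell+1$ disjoint balls) is exactly the statement to be shown, not a consequence of the construction. The paper's route is quite different and substantially heavier: it shows that a large $d$-sparse family of balls cannot exist when $\nu_\ell$ and $d$ are bounded, via Ramsey extraction of $d$-localized sets (Lemma~\ref{chemincourtlong}), interference matrices (Lemma~\ref{kpol}), independent pairs, the escape property (Lemmas~\ref{sgclique} and~\ref{finiz}), and the lower bound on distance VC-dimension through $\ell$-disconnecting families (Theorem~\ref{bigvc}). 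So both for the cited theorem and for its application, the key ideas are missing from your write-up.
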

Let $d$ be an integer. Let $G$ be a graph of distance VC-dimension $d$. By Observation~\ref{obs:Blisomdual}, the dual VC-dimension of the $B_\ell$-hypergraph is at most $d$. Hence if there exists a function $p$ such that, for every $\ell$ and every graph $G$ of distance VC-dimension $d$, the $B_\ell$-hypergraph of $G$ satisfies the $(p(\nu_{\ell},d),d+1)$-property, then Theorem~\ref{Matou} will ensures that Theorem~\ref{erdpo} holds. 
So for proving Theorem~\ref{erdpo}, it suffices to show that the size of a set of balls of radius $\ell$ which does not contain $(d+1)$ balls intersecting on a same vertex is bounded by a function of $\nu_\ell$ and $d$. The remaining of this section is devoted to proving this result.

\subsection{A lower bound for the distance VC-dimension of a graph}\label{sec:lowerboud}

Let $A$ and $B$ be two disjoint sets. An \emph{interference matrix $M=(A,B)$} is a matrix 
with $| A |$ rows and $| B |$ columns such that for every $(a,b)\in A \times B$, the \emph{entry} 
$m(a,b)$ is a subset of $(A \cup B) \backslash \{a,b\}$. The \emph{size} of an entry is its number of elements. A 
\emph{$k$-interference matrix $M$} is an interference matrix which entries have size at most $k$. If $A'\subseteq A$
and $B'\subseteq B$, the 
\emph{submatrix $M'$ of $M$ induced by $A' \times B'$} is the matrix restricted to the set of rows $A'$ 
and the set of columns $B'$ which entries are $m'(a',b')=m(a',b') \cap (A' \cup B')$. A $0$-interference 
matrix is called a \emph{proper matrix}. A matrix is \emph{square} if $|A|=|B|$. The \emph{size} of a
square matrix is its number of rows.

\begin{lemma}\label{kpol}
Let $k>0$. A $k$-interference square matrix with no proper submatrix of size $n$ has size less than~$kn^3$.
\end{lemma}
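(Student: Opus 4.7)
The plan is a double-counting argument whose object of count is the set of incidences between $n \times n$ submatrices $(A',B')$ and their \emph{violating triples}, where a violating triple is a triple $(a,b,c)$ with $a \in A'$, $b \in B'$ and $c \in m(a,b) \cap (A' \cup B')$, certifying that the submatrix is not proper.

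Suppose for contradiction that $M = (A,B)$ is a $k$-interference square matrix of size $N$ with no proper submatrix of size $n$. Every one of the $\binom{N}{n}^2$ choices of $(A',B')$ is non-proper and therefore contributes at least one violating triple, yielding a lower bound of $\binom{N}{n}^2$ incidences when one counts from the pair side.

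For the matching upper bound, I count from the triple side. The matrix contains at most $k N^2$ triples $(a,b,c)$ with $c \in m(a,b)$, since there are $N^2$ entries each of size at most $k$. A fixed such triple is a violating triple for exactly $\binom{N-2}{n-2}\binom{N-1}{n-1}$ pairs $(A',B')$ regardless of whether $c \in A$ or $c \in B$: if $c \in A$ one must have $\{a,c\} \subseteq A'$ and $b \in B'$, while if $c \in B$ one must have $a \in A'$ and $\{b,c\} \subseteq B'$, and the two counts coincide.

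Comparing the two sides gives $k N^2 \binom{N-2}{n-2}\binom{N-1}{n-1} \geq \binom{N}{n}^2$. Using the standard ratios $\binom{N-1}{n-1}/\binom{N}{n} = n/N$ and $\binom{N-2}{n-2}/\binom{N}{n} = n(n-1)/(N(N-1))$, this collapses to $k n^2 (n-1) \geq N - 1$, whence $N \leq k n^3 - k n^2 + 1$. This is strictly less than $k n^3$ as soon as $k n^2 \geq 2$; the only remaining case $k = n = 1$ is handled directly since then any entry $m(a,b)$ is a subset of the empty set and hence empty, so the hypothesis forces $N = 0$. There is no substantive obstacle in this approach: it is a clean symmetric double count, and the only care needed is in the binomial-ratio simplification and in the dispatch of the tiny boundary case.
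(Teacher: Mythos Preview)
Your proof is correct and follows essentially the same double-counting argument as the paper: count incidences between $n\times n$ submatrices and ``bad'' (your ``violating'') triples from both sides, and simplify the resulting binomial inequality to $kn^2(n-1)\geq N-1$. Your treatment is in fact slightly tidier, making the incidence count explicit and handling the boundary case $k=n=1$ (where the general inequality only yields $N\leq 1$) by observing that every $1\times 1$ submatrix is automatically proper.
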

\begin{proof}
Let us show that if $M=(A,B)$ is a $k$-interference matrix with size $m=kn^3+1$, then it 
contains a proper submatrix of size $n$. A triple $(i,j,l) \in A \times B \times (A \cup B)$ 
is a \emph{bad triple} if $l \in m(i,j)$ (and then $l \neq i$ and $l \neq j$). 
A bad triple $(i,j,l)$ is \emph{bad for $(X,Y)$} with $X \subseteq A$, $Y \subseteq B$, $|X| = |Y | = n$
if $i \in A, j \in B$ and $l$ is in $A$ or $B$.

For a given bad triple $(i, j,l)$, let us count the number of pairs $(X, Y )$ where $X \subseteq A$, $Y \subseteq B$, and $|X| = |Y | = n$ containing $(i,j,l)$ as a bad triple. 
Let us consider the case $l \in A$ (the case $l \in B$ is obtained similarly). The number of $X$'s containing both $i$ and $l$ is ${ m-2 \choose n-2 }$ since $i \neq l$. The number of $Y$'s containing $j$ is ${ m-1 \choose n-1 }$.
Since $M$ is a $k$-interference matrix, the total number of bad triples is at most $k\cdot m^2$. Thus the total number of pairs $X, Y$ with $X \subseteq A$, $Y \subseteq B$, $|X| = |Y | = n$ is ${ n \choose m}^2$. So if the number of such pairs is larger than the number of pairs containing a bad triple, the conclusion holds. In other words, if
\[ { m-2 \choose n-2 } \cdot { m-1 \choose n-1 } \cdot k m^2 < { n \choose m}^2\]
there is a pair $(X,Y)$ with $X \subseteq A$, $Y \subseteq B$, $|X| = |Y | = n$ which does not contain a bad triple. This latter inequality is equivalent with $kn^2\cdot(n - 1) < m - 1$.
%
\end{proof}

Given a path $P$ from $x$ to $y$ and a path $Q$ from $y$ to $z$, the \emph{concatenation} of $P$ and $Q$ denoted by $PQ$ is the walk consisting on the edges of $P$ followed by the edges of $Q$. The length of a path $P$ is denoted by $|P|$. Let $G=(V,E)$ be a graph and $\prec_l$ be a total order on $E$. We extend $\prec_l$
on paths, for any paths $P_1$ and $P_2$ as follows :

\begin{itemize}
\item If $P_1$ has no edges, then $P_1 \prec_l P_2$.
\item If  $P_1=P_1'.e_1$ and $P_2=P_2'.e_1$, where $e_1$ is the last edge of $P_1$ and $P_2$, 
then $P_1 \prec_l P_2$ if and only if $P_1' \prec_l P_2'$.
\item If  $P_1=P_1'.e_1$ and $P_2=P_2'.e_2$, where $e_1\neq e_2$,
then $P_1 \prec_l P_2$ if and only if $e_1 \prec_l e_2$.
\end{itemize}

The order $\prec_l$ is called the \emph{lexicographic order} (note nevertheless that paths are compared from their end to their beginning).
The \emph{minimum path} from $x$ to $z$, 
also called \emph{the $xz$-path} and denoted by $P_{xz}$, is the path of minimum length with minimum lexicographic 
order from $x$ to $z$. Observe that two minimum paths going to the same vertex $z$ and passing through 
the same vertex $u$ coincide between $u$ and $z$. We note $u \trianglelefteq_{xz} v$ if $u$ appears before $v$ on the $xz$-path.
Given a path from $a$ to $b$ passing through $c$, the \emph{suffix path on $c$} (resp. \emph{prefix path on $c$}) is the $cb$-subpath (resp. $ac$-subpath) of the $ab$-path. Note that every 
suffix of a minimum path is a minimum path. Given two sets $X$ and $Z$, the \emph{$XZ$-paths} are the $xz$-paths for all $x,z \in X \times Z$.

Let $x_1, x_2$ and $z$ be three vertices. Two distinct edges $v_1u_2$ and $u_1v_2$ form a \emph{cross} between the $x_1z$-path and the $x_2z$-path if for $i \in \{1,2\}$, $u_i \trianglelefteq_{x_iz} v_i$ (see Figure \ref{crossesproof}).

\begin{figure}
\center
\includegraphics[scale=0.65]{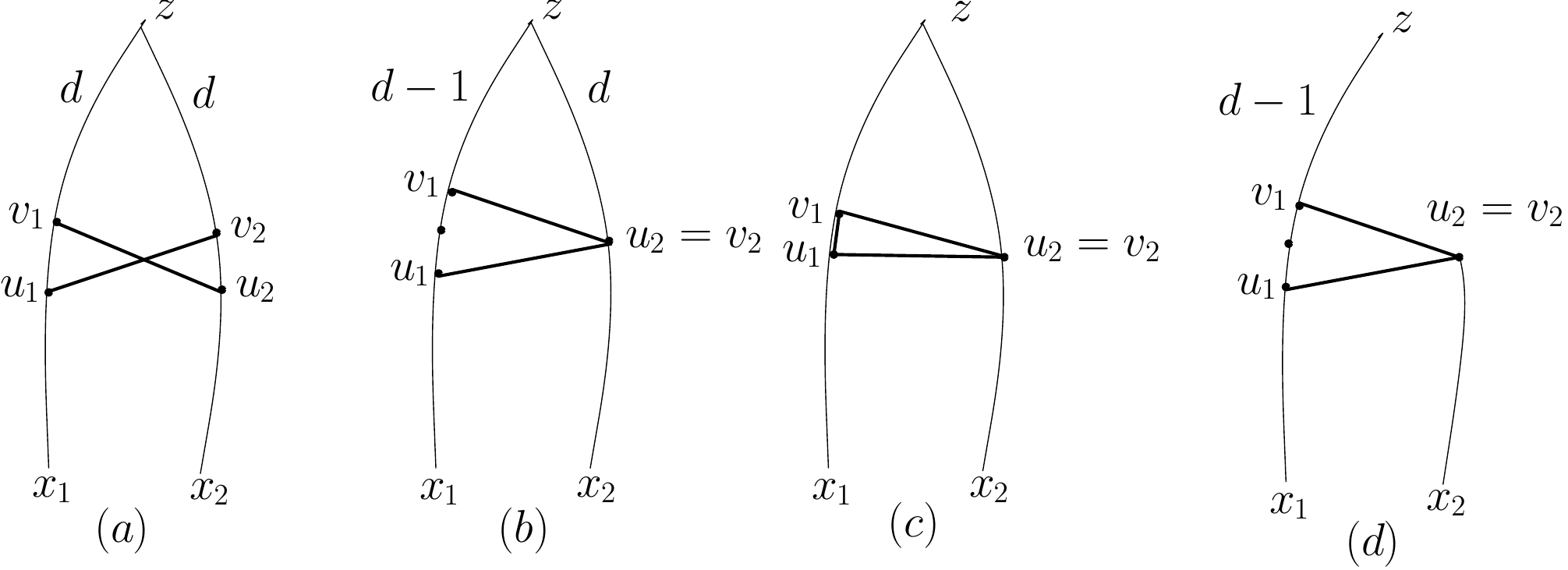}
\caption{$4$ types of crosses. Lemma~\ref{nox} ensures that, up to symmetry, only (c) and (d) are authorized. The thick chords are edges of the graph. Thin chords represent paths. Distances are denoted by $d$ or $d-1$. In the case of Figure~\ref{crossesproof}(d), the path $Q_{v_2}$ is $v_2v_1Q_{v_1}$.}
\label{crossesproof}
\end{figure}

\begin{lemma}\label{nox}
Let $x_1,x_2,z$ be three vertices. If the edges $u_1v_2$ and $v_1u_2$ form a cross between the $x_1z$-path and the $x_2z$-path, then free to exchange $x_1$ and $x_2$ we have:
\begin{itemize}
 \item either $u_2=v_2$ and $u_1v_1$ is an edge. 
 \item Or $u_2=v_2$ and the $v_2z$-path is the edge $v_2v_1$ concatenated with the $v_1z$-path.
\end{itemize}
In other words, only cases (c) and (d) of Figure~\ref{crossesproof} can occur.
\end{lemma}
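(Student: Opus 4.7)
My plan exploits the lex-minimality of $P_{x_1z}$ and $P_{x_2z}$ by considering ``rerouted'' shortest paths obtained by traversing the cross edges. Set $a=d(u_1,z)$, $a'=d(u_2,z)$, $b=d(v_1,z)$, $c=d(v_2,z)$. The condition $u_i \trianglelefteq_{x_iz} v_i$ with $u_i\neq v_i$ gives $a\geq b+1$ (resp.\ $a'\geq c+1$), while rerouting $P_{x_1z}$ through the edge $u_1v_2$ followed by $Q_{v_2}$ (the $v_2z$-path) produces a walk of length $d(x_1,u_1)+1+c$, which cannot be shorter than $P_{x_1z}$, so $a\le c+1$; symmetrically $a'\le b+1$.

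The first step is to rule out the case $u_1\neq v_1$ \emph{and} $u_2\neq v_2$. The preceding inequalities force $b=c$ and $a=a'=b+1$; in particular $u_i$ and $v_i$ are adjacent on $P_{x_iz}$, and $S_1=(\text{prefix of }P_{x_1z}\text{ to }u_1)\cdot u_1v_2\cdot Q_{v_2}$ and $S_2=(\text{prefix of }P_{x_2z}\text{ to }u_2)\cdot u_2v_1\cdot Q_{v_1}$ are shortest $x_iz$-paths. The lex-comparisons of $P_{x_1z}$ vs $S_1$ and of $P_{x_2z}$ vs $S_2$ reduce, after cancellation of the common prefixes and the single differing first edge, to a lex-comparison of the two length-$b$ paths $Q_{v_1}$ and $Q_{v_2}$; however, the two reductions swap the roles of these $Q$'s. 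Since $v_1\neq v_2$, the two $Q$'s are distinct minimum paths of the same length, so one is strictly $\prec_l$-smaller than the other, and whichever inequality holds yields $S_1\prec_l P_{x_1z}$ or $S_2\prec_l P_{x_2z}$, contradicting lex-minimality.

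We may therefore assume, up to exchanging $x_1$ and $x_2$, that $u_2=v_2$; distinctness of the two cross edges then forces $u_1\neq v_1$. A short case analysis of $(a,b,c)$ using the inequalities above leaves only two possibilities: either $a=b+1$, in which case $u_1$ and $v_1$ are consecutive on $P_{x_1z}$ and $u_1v_1$ is an edge (case (c)); or $a=b+2$ and $c=b+1$, in which case $P_{x_1z}$ contains a unique vertex $w$ strictly between $u_1$ and $v_1$ with $d(w,z)=b+1$. If $w=v_2$, then $v_2$ lies on $P_{x_1z}$ and the suffix of $P_{x_1z}$ from $v_2$, which is a minimum $v_2z$-path and hence equals $Q_{v_2}$, begins with the edge $v_2v_1$; so $Q_{v_2}=v_2v_1\cdot Q_{v_1}$, which is case (d).

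The main obstacle is the sub-case $w\neq v_2$, which I expect to be the hardest part. I argue by contradiction, assuming furthermore that $Q_{v_2}$ begins with an edge $v_2w^*$ with $w^*\neq v_1$. Two symmetric rerouting arguments then apply. First, lex-minimality of $P_{x_1z}$, compared against the rerouted shortest $x_1z$-path $(\text{prefix to }u_1)\cdot u_1v_2\cdot Q_{v_2}$, forces $Q_{v_1}\prec_l Q_{w^*}$: after cancellation of the common parts the comparison falls on the length-$b$ tails $Q_{v_1}$ and $Q_{w^*}$, which are distinct since $v_1\neq w^*$. Symmetrically, lex-minimality of $P_{x_2z}$ compared against $(\text{prefix to }v_2)\cdot v_2v_1\cdot Q_{v_1}$ forces $Q_{w^*}\prec_l Q_{v_1}$. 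The two inequalities contradict each other. A minor subtlety is that $Q_{v_1}$ and $Q_{w^*}$ may share some terminal edges; in that case the coincidence property (two minimum paths meeting at a common vertex agree from that vertex to $z$) lets us descend to the first position at which they genuinely differ, where the same two-sided argument still applies. Hence either $w=v_2$ or $w^*=v_1$, and in both cases $Q_{v_2}=v_2v_1\cdot Q_{v_1}$, placing us in case (d).
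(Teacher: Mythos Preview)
Your argument is correct and follows essentially the same strategy as the paper: reroute through the cross edges and invoke the length- and lex-minimality of the suffix paths $Q_{u_i},Q_{v_i}$ to force a contradiction. The only differences are cosmetic---the paper works directly with the suffixes $Q_{u_i}$ rather than the full paths $S_i$, and in the degenerate case it compares $Q_{v_1}$ with $Q_{v_2}$ (paths of different lengths) rather than introducing $w$ and $w^*$ to compare the equal-length tails $Q_{v_1}$ and $Q_{w^*}$; your version is arguably cleaner on that point. One wording caution: your phrase ``cancellation of the common prefixes and the single differing first edge'' is misleading, since $\prec_l$ is defined from the \emph{end} of the path; the reductions work precisely because the length-$b$ tails $Q_{v_1}$ and $Q_{v_2}$ (resp.\ $Q_{v_1}$ and $Q_{w^*}$) already differ, so the comparison is decided there before ever reaching the prefix.
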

\begin{proof}
For $i \in \{1,2\}$, we denote by $Q_{u_i}$ (resp. $Q_{v_i}$) the suffix of the $x_iz$-path on $u_i$ (resp. $v_i$). Since suffixes of minimum paths are minimum paths, these four paths are minimum paths. We prove that if a cross does not satisfy the condition of Lemma~\ref{nox}, then one of these paths is not minimum.

A \emph{real cross} is a cross for which $u_1 \neq v_1$ and $u_2 \neq v_2$ (Figure~\ref{crossesproof}(a)). 
A \emph{degenerated cross} is a cross for which, up to symmetry, $u_2=v_2$ and $Q_{v_2} \neq v_2v_1.Q_{v_1}$ (Figure~\ref{crossesproof}(b)). \\
A real cross satisfies $|Q_{v_1}|=|Q_{v_2}|$. Indeed if $|Q_{v_1}| < |Q_{v_2}|$ then $u_2v_1.Q_{v_1}$ has length at most $|Q_{v_2}|$. This path is strictly shorter than $Q_{u_2}$ (since $u_2 \neq v_2$, indeed the cross is a real cross), contradicting the minimality of $Q_{u_2}$. So $|Q_{v_1}|=|Q_{v_2}|$. Free to exchange $x_1$ and $x_2$, we have $Q_{v_1} \prec_l Q_{v_2}$. So $u_2v_1.Q_{v_1} \prec_l Q_{u_2}$ (recall that we first compare the last edge) and $|u_2v_1.Q_{v_1}| \leq |Q_{u_2}|$. So $Q_{u_2}$ is not minimum, a contradiction. Hence there is no real cross. \\
Consider a degenerated cross such that $u_1v_1 \notin E$. In particular $u_1$ and $v_1$ are at distance $2$. So we have $|Q_{v_1}| < |Q_{v_2}|$ otherwise $u_1v_2.Q_{v_2}$ would be strictly shorter than $Q_{u_1}$, a contradiction. In addition, $|Q_{v_2}|$ and $|Q_{v_1}|$  differ by at most one since $v_1v_2$ is an edge. So $|Q_{v_1}| +1 = |Q_{v_2}|$.
Assume now that we are not in the case of Figure~\ref{crossesproof}(d), in other words, $Q_{v_2} \neq v_2v_1Q_{v_1}$. If $Q_{v_2} \prec_l Q_{v_1}$ then $u_1v_2Q_{v_2}$ is not longer than $Q_{u_1}$ (since $u_1$ and $v_1$ are at distance $2$) and has a smaller lexicographic order, a contradiction with the minimality of $Q_{u_1}$. 
If $Q_{v_1} \prec_l Q_{v_2}$ then $v_2v_1.Q_{v_1}$ is not longer and has a smaller lexicographic order, a contradiction with the minimality of $Q_{v_2}$. So either the degenerated cross satisfies $u_1v_1 \in E$ or $Q_{v_2}=v_2v_1.Q_{v_1}$.
\end{proof}

\begin{figure}
\center
\includegraphics[scale=0.75]{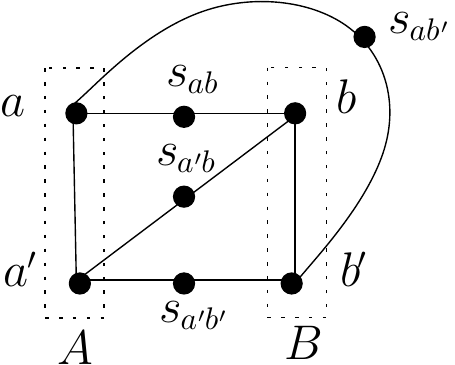}
\caption{The sets  $S_{ab}=\{s_{ab}\}$  are $2$-disconnecting for $A,B$.}
\label{fig:ldisconnecting}
\end{figure}

Let $\ell$ be an integer and $A,B$ be two disjoint subsets of vertices. To every pair $(a,b) \in A \times B$, 
we associate a set of vertices $S_{a,b}$ which is disjoint from $A \cup B$. 
We say that the set of subsets ${\mathcal S}=\{(S_{a,b})_{(a,b) \in A \times B}\}$ is \emph{$\ell$-disconnecting} if
for every subset $C$ of ${\mathcal S}$ and every pair $(a,b)$, we have $d(a,b) > \ell$ in $G\setminus \bigcup C$ if 
and only if $S_{a,b} \in C$. 
If such a family of sets exists, 
then $A,B$ are said to be \emph{$\ell$-disconnectable}. Another way of defining $\ell$-disconnecting families
would be to say that $d(a,b) > \ell$ in $G\setminus S_{a,b}$ and $d(a,b) \leq \ell$ in $G\setminus \bigcup ({\mathcal S}\setminus S_{a,b})$,
or roughly speaking that $S_{a,b}$ is the only set whose deletion can increase $d(a,b)$ above $\ell$.
In Figure~\ref{fig:ldisconnecting}, the sets $A,B$ are $2$-disconnectable. Indeed the deletion of any vertex $s_{ab}$ eliminates all the paths of length at most $2$ from $a$ to $b$. Note nevertheless that $a$ and $b$ are still in the same connected component after this operation.

\begin{theorem}\label{bigvc}
Let $G=(V,E)$ be a graph and $\ell$ be an integer. If there exist two subsets $A,B$ of $V$ with $|B|=2^{|A|}$ which 
are $\ell$-disconnectable, then the distance VC-dimension of $G$ is at least $|A|$.
\end{theorem}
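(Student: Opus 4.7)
The plan is to exploit the $\ell$-disconnecting property in a single, well-chosen deletion, so that one induced subgraph $G'$ witnesses the shattering of $A$ by balls of radius $\ell$. The key observation is that the definition of $\ell$-disconnecting guarantees control over the distances $d(a,b)$ for \emph{every} choice of a subfamily $C\subseteq \mathcal S$, which is more than enough: we only need to instantiate it once, with a cleverly chosen $C$.

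First, I would use the hypothesis $|B|=2^{|A|}$ to fix a bijection $T\mapsto b_T$ between the subsets $T\subseteq A$ and the vertices of $B$. The goal is to produce, for each $T\subseteq A$, a ball in $G'$ that picks out exactly $T$ from $A$: the natural candidate is $B_{G'}(b_T,\ell)$. For this to intersect $A$ in exactly $T$, I need $d_{G'}(a,b_T)\le \ell$ iff $a\in T$.

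The core step is to set
\[
C \;=\; \bigl\{\,S_{a,b_T}\;:\; T\subseteq A,\ a\in A\setminus T\,\bigr\}\subseteq\mathcal S,
\]
and take $G':=G\setminus\bigcup C$. Since each $S_{a,b}$ is disjoint from $A\cup B$, both $A$ and $B$ remain in $V(G')$. Now apply the $\ell$-disconnecting property to this single $C$: for every pair $(a,b_T)\in A\times B$,
\[
d_{G'}(a,b_T)>\ell \ \iff\ S_{a,b_T}\in C\ \iff\ a\notin T.
\]
Consequently $B_{G'}(b_T,\ell)\cap A = T$ for every $T\subseteq A$, so $A$ is shattered by the $B_\ell$-hypergraph of $G'$. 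Since $G'$ is an induced subgraph of $G$, the distance VC-dimension of $G$ is at least $|A|$.

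There is no real technical obstacle here; the whole content of the proof is the observation that one can delete all the ``blocking'' sets simultaneously without undesirable side effects, precisely because the $\ell$-disconnecting property asserts that $S_{a,b}$ is the unique set whose presence in $C$ can push $d(a,b)$ above $\ell$. The mildly subtle point is remembering that the distance VC-dimension is defined by a maximum over induced subgraphs, so we are entitled to move to $G'$ rather than work inside $G$ itself.
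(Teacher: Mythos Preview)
Your proof is correct and is essentially the same as the paper's: both fix a bijection between $B$ and the subsets of $A$, delete in one shot all the sets $S_{a,b}$ corresponding to pairs that should be far, and invoke the $\ell$-disconnecting property to conclude that each $B(b,\ell)\cap A$ equals the prescribed subset in the resulting induced subgraph. The only cosmetic difference is that the paper indexes the bijection as $b\mapsto A_b$ and places $S_{a,b}$ in $C$ when $a\in A_b$ (so that $B(b,\ell)\cap A=A\setminus A_b$), whereas you index it as $T\mapsto b_T$ and place $S_{a,b_T}$ in $C$ when $a\notin T$; these are the same argument up to complementation.
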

\begin{proof}
Let us prove that the set $A$ can be shattered in the $B_\ell$-hypergraph of an induced subgraph of $G$. Associate in a
one to one way every vertex $b$ of $B$ to a subset $A_b$ of $A$. 
Since $A,B$ are $\ell$-disconnectable, there exists a family ${\mathcal S}$ of subsets which is $\ell$-disconnecting for $A,B$.
Let $C$ be the collection of ${\mathcal S}$ consisting of all the sets $S_{a,b}$ such that $a \in A_b$. Since ${\mathcal S}$ 
is $\ell$-disconnecting, $B(b,\ell) \cap A = A \setminus A_b$ in $G\setminus C$, for all $b\in B$ (the deletion of $S_{a,b}$ eliminates the paths of length at most $\ell$ between $a$ and $b$). Hence the set $A$ is shattered 
by balls of radius $\ell$ in $G\setminus C$. Therefore the distance VC-dimension of $G$ is at least $|A|$.
\end{proof}


\subsection{Sparse sets}\label{secsparse}
Let $G$ be a graph of distance VC-dimension $d$ and $q, \ell$ be two integers. Most of the following definitions depend on $\ell$. Nevertheless, in order to avoid heavy notations, this dependence will be implicit in the terminology. A set of balls of radius $\ell$ is \emph{$q$-sparse} if no vertex of the graph is in more than $q$ balls of the set. Note that a subset of a $q$-sparse set is still $q$-sparse. By abuse of notation, a set $X$ of vertices is called \emph{$q$-sparse} if the set of balls of radius $\ell$ centered in $X$ is $q$-sparse. 

Assume that the $B_\ell$-hypergraph of a graph $G$ does not satisfy the $(p,d+1)$-property. Then there exist $p$ balls of radius $\ell$ such that no vertex is in at least $(d+1)$ of these $p$ balls, \emph{i.e.} there is a $d$-sparse set of size $p$. In other words, a $d$-sparse set of size $p$ is a certificate that the $(p,d+1)$-property does not hold. In order to prove Theorem~\ref{erdpo}, we just have to show that $p$ can be bounded by a function of $d$ and $\nu_l$. The remaining of this section is devoted to show that there exists a function $f$ such that the size of a $d$-sparse set is at most $f(d,\nu_l)$. 

A set $X$ of vertices is \emph{$d$-localized} if the vertices of $X$ are pairwise at distance at least $\ell+1$ and at most $2\ell-2^{d+2}-3$. A $d$-localized set is defined only if this value is positive. A pair $A,B$ of disjoint sets of vertices is \emph{$q$-sparse} if $A \cup B$ is. A disjoint pair $A,B$ of vertices is \emph{$d$-localized} if the vertices of $A \cup B$ are pairwise at distance at least $\ell+1$, and if for every $a,b \in A \times B$, $d(a,b) \leq 2\ell-2^{d+2}-3$. A subpair of a $d$-localized pair is $d$-localized. The \emph{size} of a pair $A,B$ is $\min(|A|,|B|)$.

\begin{theorem}\label{ramsey}(Ramsey)
There exists a function $r_k$ such that every complete edge-colored graph $G$ with $k$ colors 
with no monochromatic clique of size $n$ has at most $r_k(n)$ vertices.
\end{theorem}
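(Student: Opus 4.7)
This is the classical multicolor Ramsey theorem, so I will follow the standard route. The plan is to induct on the number of colors $k$. The base case $k=1$ is trivial with $r_1(n)=n$ (there is nothing to color). Before handling the inductive step, I first establish the two-color case: define $R(s,t)$ as the least $N$ such that every $2$-coloring of $K_N$ has either a monochromatic $K_s$ in the first color or a monochromatic $K_t$ in the second, and prove the Erd\H{o}s--Szekeres recurrence $R(s,t)\le R(s-1,t)+R(s,t-1)$ with base cases $R(s,1)=R(1,t)=1$. This recurrence is proved by picking any vertex $v$ in $K_N$ with $N=R(s-1,t)+R(s,t-1)$; its $N-1$ incident edges split into two color classes, and by pigeonhole one of them has size at least $R(s-1,t)$ (in the first color) or $R(s,t-1)$ (in the second), at which point the inductive hypothesis on the corresponding neighborhood, combined with $v$, produces the desired monochromatic clique. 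In particular $R(n,n)\le \binom{2n-2}{n-1}$ is finite.

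For the inductive step on $k$, assume $r_{k-1}$ exists and set
\[ r_k(n) := r_{k-1}\bigl(R(n,n)\bigr). \]
Given any $k$-edge-coloring of $K_N$ with $N=r_k(n)$, merge the last two color classes into a single new color to obtain a $(k-1)$-edge-coloring. By the induction hypothesis this coloring contains a monochromatic clique $K$ of size $R(n,n)$. Two cases arise. If the color of $K$ is one of the first $k-2$ colors, then in the original coloring $K$ is already a monochromatic $K_{R(n,n)}$, which contains a monochromatic $K_n$. If instead $K$ has the merged color, then the edges of $K$ in the original coloring use only the two merged colors; hence $K$ is a copy of $K_{R(n,n)}$ edge-colored with exactly two colors and, by the definition of $R(n,n)$, contains a monochromatic $K_n$ in one of them.

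The argument is entirely routine textbook Ramsey theory, so I do not expect any significant obstacle. The resulting bound on $r_k(n)$ is a $k$-fold iterate of the two-color Ramsey function and is vastly larger than what one would want for sharp applications, but for the present purpose only the existence of $r_k$ is needed.
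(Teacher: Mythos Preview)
Your proof is the standard textbook argument and is essentially correct; the paper itself does not prove this statement at all, simply citing it as the classical multicolor Ramsey theorem and using it as a black box. There is one harmless off-by-one slip worth noting: with your definition $r_k(n)=r_{k-1}(R(n,n))$, the induction hypothesis only guarantees a monochromatic $K_{R(n,n)}$ once $N>r_{k-1}(R(n,n))$, not when $N=r_{k-1}(R(n,n))$; so you should either argue on $K_N$ with $N=r_k(n)+1$, or equivalently adjust the definition to $r_k(n)=r_{k-1}(R(n,n))+1$. This does not affect the existence claim, which is all that is needed here.
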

All along the paper, logarithms are in base $2$.
\begin{theorem}\label{memelongueur}
Let $G$ be a graph and $X$ be a subset of vertices pairwise at distance exactly $r$. Assume also 
that no vertex of $G$ belongs to $q$ balls of radius $\lceil r/2 \rceil$ with centers in $X$. Then 
the distance VC-dimension of $G$ is at least $(\log |X| -\log 2q)/3$.
\end{theorem}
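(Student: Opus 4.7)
The plan is to invoke Theorem~\ref{bigvc} with $\ell := r$: I will produce subsets $A, B \subseteq X$ with $|A| = K := \lfloor (\log|X| - \log 2q)/3 \rfloor$ and $|B| = 2^K$ that are $r$-disconnectable through singleton sets $S_{a,b} = \{c_{ab}\}$, where $c_{ab}$ is chosen on the lex-minimum $ab$-path.

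First, partition $X$ arbitrarily into two halves $X_1, X_2$ of size $|X|/2$. For each pair $(a,b) \in X_1 \times X_2$, let $P_{ab}$ be the minimum $ab$-path and $c_{ab}$ its vertex at distance $\lceil r/2\rceil$ from $a$, so $c_{ab} \in B(a,\lceil r/2\rceil) \cap B(b,\lceil r/2\rceil)$. Build the matrix $M = (X_1, X_2)$ with entries $m(a,b) := \{x \in (X_1 \cup X_2) \setminus \{a, b\} : d(x, c_{ab}) \leq \lceil r/2\rceil\}$. The sparsity hypothesis forces $c_{ab}$ to belong to at most $q - 1$ balls of radius $\lceil r/2\rceil$ centered in $X$; since $B(a, \lceil r/2\rceil)$ and $B(b, \lceil r/2\rceil)$ are two of them, $|m(a,b)| \leq q - 3 \leq q$, so $M$ is a $q$-interference square matrix of size $|X|/2$. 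My choice of $K$ ensures $|X|/2 \geq q \cdot (2^K)^3$, so Lemma~\ref{kpol} delivers a proper submatrix of size $2^K$. Keeping $K$ of its rows and all its columns yields $A \subseteq X_1$, $B \subseteq X_2$ such that $d(x, c_{ab}) > \lceil r/2\rceil$ for every $(a,b) \in A \times B$ and every $x \in (A \cup B) \setminus \{a,b\}$.

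I then take $S_{a,b} := \{c_{ab}\}$ and verify the easy parts of $r$-disconnectability. Assuming $r \geq 2$ (the bound is vacuous otherwise), $c_{ab} \notin X$ and hence $c_{ab} \notin A \cup B$, while $d(a,b) = r = \ell$. For $(a',b') \neq (a,b)$, a short case analysis using the properness property, which gives $d(a, c_{a'b'}) > \lceil r/2\rceil$ or $d(b, c_{a'b'}) > \lceil r/2\rceil$ depending on which of $a,b$ differs from $a', b'$, together with the midpoint equalities $d(a', c_{a'b'}) = \lceil r/2\rceil$ and $d(b', c_{a'b'}) = \lfloor r/2\rfloor$, shows that $c_{a'b'}$ cannot lie on any shortest $ab$-path, since any such vertex $v$ satisfies $d(a,v) + d(v,b) = r$. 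Hence removing $c_{a'b'}$ leaves $d(a,b)$ unchanged. Granted the remaining condition below, Theorem~\ref{bigvc} yields distance VC-dimension at least $K$.

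The main obstacle is the converse direction: that removing $c_{ab}$ itself already raises $d(a,b)$ above $r$, equivalently that every shortest $ab$-path passes through $c_{ab}$. My plan is to exploit Lemma~\ref{nox}. Given an alternative shortest $ab$-path $Q$ whose midpoint $c'$ differs from $c_{ab}$, I would apply the lemma to pairs of minimum paths built from $P_{ab}$ and $c'$ (for instance the minimum $c_{ab}b$- and $c'b$-paths, both of length $\lfloor r/2\rfloor$ and ending at $b$), using the restricted admissible cross configurations together with the lex-minimality of $P_{ab}$ and the properness of $m(a,b)$ to derive a contradiction. A careful case analysis, perhaps preceded by a Ramsey-style refinement of $A, B$ to normalize the relative lex-order of the candidate paths, is the technical heart of the argument.
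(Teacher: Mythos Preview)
Your setup through the interference matrix and the ``easy'' direction is essentially the same as the paper's. The genuine gap is exactly where you flag it: showing that deleting $c_{ab}$ pushes $d(a,b)$ above $r$. In the original graph $G$ there is simply no reason every shortest $ab$-path should pass through the single midpoint of one lex-minimum path; two internally disjoint $ab$-paths of length $r$ already break this. Your proposed fix via Lemma~\ref{nox} will not rescue it: that lemma constrains crosses between two \emph{lex-minimum} paths sharing a common endpoint, whereas here you face an arbitrary shortest $ab$-path $Q$ with the same endpoints as $P_{ab}$; the cross configurations (c) and (d) of Lemma~\ref{nox} do not force $Q$ to pass through $c_{ab}$, and no Ramsey refinement of $A,B$ changes that, since the obstruction is local to a single pair $(a,b)$.

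The paper's key move, which you are missing, is to \emph{pass to the induced subgraph} $G'$ on $\bigcup_{(a,b)\in A'\times B'} P_{ab}$ after extracting the proper submatrix; this is legal because distance VC-dimension is a maximum over induced subgraphs. In $G'$, every vertex lies on some $P_{a'b'}$, so any midpoint $x$ of any shortest $ab$-path in $G'$ satisfies $d(a,x),d(b,x)\le r'$ and lies on some $P_{a'b'}$; properness of the interference matrix then forces $a'=a$ and $b'=b$, hence $x$ is a midvertex of $P_{ab}$. Two further points: first, the paper takes $S_{a,b}=M_{ab}$ to be the set of \emph{both} midvertices (at most two) of $P_{ab}$, not a singleton, since the midpoint $x$ above could be either one; second, the interference entry must accordingly record all $y\in X$ with $B(y,r')\cap P_{ab}\neq\varnothing$, which can meet $P_{ab}$ at either midvertex, giving a $2q$-interference (not $q$-interference) matrix---and this is precisely the origin of the $\log 2q$ term in the bound.
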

\begin{proof}
Let $r'$ be equal to $\lceil r/2 \rceil$. Free to remove one vertex from $X$, we can assume 
that $X$ is even, and we consider a partition $A,B$ of $X$ with $|A| = |B|$. For every pair 
$(a,b)\in A \times B$, we denote the minimum $ab$-path by $P_{ab}$. By abuse of notation, we 
still denote by $G$ the restriction of $G$ to the vertices of the union of the paths $P_{ab}$ for all $a\in A$ and 
$b \in B$. Observe that we preserve the hypothesis
of Theorem~\ref{memelongueur} apart from the fact that the distance between vertices inside $A$ (resp. inside $B$)
may have increased above $r$. Let $a\in A$ and $b \in B$. Let $y$ be a vertex of $X$ distinct from $a$ and $b$. 
If $B(y,r')\cap P_{ab} \neq \varnothing$, then denote by $x$ a vertex in this set.
We have $d(a,x) \geq \lfloor r/2 \rfloor$ since $d(a,y)\geq r$ and $d(y,x)\leq\lceil r/2 \rceil$. 
By symmetry, we also have $d(b,x) \geq \lfloor r/2 \rfloor$. Hence $x$ is a 
\emph{midvertex} of $P_{ab}$, \emph{i.e.} a vertex of $P_{ab}$ at distance  $\lfloor r/2 \rfloor$ or $\lceil r/2 \rceil$ 
from $a$ (and thus also from $b$). Recall that a midvertex $x$ of $P_{ab}$ belongs to at most 
$q-1$ balls of radius $r'$ (including $B(a,r')$ and $B(b,r')$).

Consider the interference matrix $M=(A,B)$ where 
$m(a,b)=\{ y\in (A \cup B) \backslash \{ a,b \} | B(y,r') \cap P_{ab} \neq \varnothing \}$. Since
$P_{ab}$ has at most two midvertices and each of these belongs to at most $q-3$
balls  $B(y,r')$ with $y$ different from $a$ and $b$, the matrix $M$ is a $(2q-6)$-interference 
matrix. To avoid tedious calculations and free to increase the interference value, we only 
assume that $M$ is a $2q$-interference 
matrix (with $2q\geq 1$). By Lemma~\ref{kpol}, there is a proper submatrix $M'$ 
of size $N=(|X|/2q)^{1/3}$. Let us denote by $A'$ the set of rows and $B'$ the set of columns of the extracted 
matrix. Let us still denote by $G$ the restriction of the graph to the vertices of the paths $(P_{ab})_{(a,b) \in A' \times B'}$. 

Let $a,a'\in A'$ and $b'\in B'$. The key-observation is that if $B(a,r')$ intersects $P_{a'b'}$, then $a=a'$. Indeed, by definition of $M$, we have $a \in m(a',b)$,
contradicting the fact that $M'$ is a proper submatrix.

Let $M_{ab}$ be the set of midvertices of $P_{ab}$, where $a,b \in A' \times B'$. We claim that 
$M_{ab}$ is disjoint from $P_{a'b'}$, whenever $P_{a'b'}\neq P_{ab}$. Indeed if 
$x\in M_{ab}\cap P_{a'b'}$, we have in particular both $d(a,x)\leq r'$ and $d(b,x)\leq r'$,
and thus by the key-observation $a=a'$ and $b=b'$. In other words, deleting 
$M_{ab}$ never affects $P_{a'b'}$, whenever $P_{a'b'}\neq P_{ab}$.

Another crucial remark is that every path $P$ of length $r$ from $a$ to $b$ intersects $M_{ab}$. Indeed,
let $x$ be a vertex of $P$ with both $d(a,x)\leq r'$ and $d(b,x)\leq r'$. Since $x$ is in $G$, it belongs to some path 
$P_{a'b'}$. By the key-observation, we both have $a'=a$ and $b'=b$, hence $x\in M_{ab}$.

To conclude, observe that the deletion of $M_{ab}$ ensures that the distance $d(a,b)$ is more than $r$
whereas deleting the union of all $M_{a'b'}$ different from $M_{ab}$ does not affect
$d(a,b)$ which is still equal to $r$.
Consequently, the sets $(M_{ab})_{(a,b) \in A' \times B'}$ are $r$-disconnecting for $A',B'$. 
Hence, by Theorem~\ref{bigvc}, the distance VC-dimension of $G$ is at least $\log(N)=(\log |X| -\log 2q)/3$.
\end{proof}

\begin{lemma}\label{chemincourtlong}
Let $G$ be a graph of distance VC-dimension at most $d$. There exists a function $f$ such that:
\begin{enumerate}[(a)]
 \item Either $G$ contains a $d$-localized set of size $p$ which is $d$-sparse,
 \item Or the $(f(\nu_\ell,d,p),d+1)$-property holds.
\end{enumerate}
\end{lemma}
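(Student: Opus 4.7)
The plan is to prove the contrapositive of (b). Assume the $(N,d{+}1)$-property fails for $N := f(\nu_\ell,d,p)$; then there is a $d$-sparse set $X$ of centers with $|X| = N$, and I will extract from $X$ a $d$-localized $d$-sparse subset of size $p$, giving~(a). The first step is to install the lower bound $\ell+1$ on pairwise distances. Since $X$ is $d$-sparse, every $x \in X$ lies in at most $d$ of the balls $B(y,\ell)$ with $y \in X$; one of these is $B(x,\ell)$, so at most $d-1$ centers $y \in X \setminus \{x\}$ satisfy $d(x,y) \le \ell$. Thus the auxiliary graph on $X$ whose edges are pairs at distance $\le \ell$ has maximum degree at most $d-1$, and a greedy independent set yields a subset $X_1 \subseteq X$ of size $\ge |X|/d$ whose pairwise distances all exceed~$\ell$. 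The set $X_1$ inherits $d$-sparseness.

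The second step installs the upper bound $D := 2\ell - 2^{d+2} - 3$ by Ramsey. For each pair $\{x,y\} \subseteq X_1$ I assign a color: $\mathrm{G}$ if $d(x,y) \in [\ell+1,D]$, $\infty$ if $d(x,y) > 2\ell$, and the integer $r$ itself if $d(x,y) = r \in (D,2\ell]$. The intersection $(D,2\ell]\cap\mathbb{Z}$ has exactly $2^{d+2}+3$ elements, so this uses $k := 2^{d+2}+5$ colors in total. Setting
\[ N^\star := \max\bigl(p,\ \nu_\ell + 1,\ (2d+2)\cdot 8^d + 1\bigr), \]
Theorem~\ref{ramsey} provides a threshold $r_k(N^\star)$ such that if $|X_1| \ge r_k(N^\star)$, then $X_1$ admits a monochromatic clique $Y$ of size $N^\star$, and $Y$ remains $d$-sparse.

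The third step is a case split on the color of $Y$. If the color is $\mathrm{G}$, then all pairwise distances in $Y$ lie in $[\ell+1,D]$, so $Y$ is a $d$-localized $d$-sparse set of size $\ge p$ and (a) holds. If the color is $\infty$, then the balls $B(y,\ell)$ for $y \in Y$ are pairwise disjoint, forcing $\nu_\ell \ge N^\star$ and contradicting the choice of $N^\star$. If the color is some $r \in (D,2\ell]$, then the vertices of $Y$ are pairwise at distance exactly $r$; since $\lceil r/2 \rceil \le \ell$, the set $Y$ is $d$-sparse with respect to balls of radius $\lceil r/2 \rceil$, so Theorem~\ref{memelongueur} applied with $q := d+1$ gives distance VC-dimension at least $(\log N^\star - \log(2d+2))/3 > d$, contradicting the hypothesis on $G$. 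Setting $f(\nu_\ell,d,p) := d \cdot r_k(N^\star)$ closes the plan.

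The hard part is the narrow distance window $(2\ell - 2^{d+2} - 3,\ 2\ell]$: pairs in that window are not far enough to give disjoint balls (so $\nu_\ell$ alone does not bound them) yet not close enough to count as $d$-localized. The constant $2^{d+2}+3$ in the definition of $d$-localized is tuned precisely so that this window contains at most $2^{d+2}+3$ integer distance values, keeping the number of Ramsey colors bounded in $d$, and Theorem~\ref{memelongueur} then converts a same-distance clique in that window into a lower bound on the distance VC-dimension strong enough to contradict the hypothesis.
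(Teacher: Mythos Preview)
Your proof is correct and follows essentially the same approach as the paper: assume the $(N,d+1)$-property fails, obtain a $d$-sparse set of centers, color pairs by distance, apply Ramsey, and eliminate the ``bad'' colors using $\nu_\ell$ and Theorem~\ref{memelongueur}. The only difference is cosmetic: you handle the case $d(x,y)\le\ell$ by a separate greedy pre-processing step (bounded-degree independent set), whereas the paper simply assigns it one extra Ramsey color and rules out that color by the same $d$-sparseness observation; up to trivial off-by-one adjustments in the definition of $f$, the arguments are the same.
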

\begin{proof}
Let $D=2^{d+2}+2$ and $N=\max(p,\nu_l+1,2^{3d+3+\log(4d+2)})$. Let $f$ be a function such that $f(\nu_\ell,d,p) \geq r_{D+4}(N)+1$. 
Let us show that function $f$ satisfies Lemma~\ref{chemincourtlong}. Assume that point (b) does not hold, 
\emph{i.e.} the $(f(\nu_\ell,d,p),d+1)$-property does not hold. So there is a subset $X$ of vertices of size $r_{D+4}(N)+1$ such that the set $X$ is $d$-sparse.
Let us show that point (a) holds.

Consider the complete $(D+4)$-edge-colored graph $G'$ with vertex set $X$ such that, for every $x,y \in X$, $xy$ has color:
\begin{itemize}
\item $c$ with $0 \leq c \leq D$ if $d(x,y)=2\ell-c$,
\item $D+1$ if $d(x,y) \leq \ell$,
\item $D+2$ if $d(x,y)>2\ell$,
\item $D+3$ otherwise.
\end{itemize}
Theorem~\ref{ramsey} ensures that there is a monochromatic clique $K$ of size $N$.
Let $K'$ be a clique of color $D+1$ and  $x \in K'$. Then $K' \subseteq B(x,\ell) \cap X$. Thus the size of $K'$ is at most $d$ since $X$ is $d$-sparse.
At most $\nu_{\ell}$ balls of radius $\ell$ centered in $X$ are vertex disjoint by definition of the packing number. Thus the size of a clique of color $D+2$ is at most $\nu_{\ell}<N$.
Since $X$ is $d$-sparse, then $K$ also is. Then, for every $0 \leq c \leq D$, no vertex of $G$ belongs to $(d+1)$ balls of radius $\lceil (2\ell -c)/2 \rceil \leq \ell$ centered in $X$. 
Therefore the color of $K$ cannot be in $0 \leq r \leq D$. Otherwise Theorem~\ref{memelongueur} would ensure that the distance VC-dimension of $G$ is at least $\log(N)/3 -\log(4d+2)/3 \geq d+1$.
So the clique $K$ of size $N \geq p$ has color $D+3$. A clique of color $D+3$ defines a $d$-localized set. Moreover $K$ is $d$-sparse since $X$ is.
Thus $K$ satisfies (a).
\end{proof}

The vertices of a $d$-localized set have to be pairwise at distance at least $d+1$ and at most $2\ell - 2^{d+2}-3$. The edge-colored graph of Lemma~\ref{chemincourtlong} was constructed in order to ensure this property. 


\subsection{Localized and independent pairs}\label{secconcentrated}

In this section we introduce a notion of independence for every pair of vertices. We first give some properties of independent pairs and we will finally show that any large enough $d$-sparse and $d$-localized pair contains a large enough independent subpair.

\begin{figure}
\centering
\includegraphics{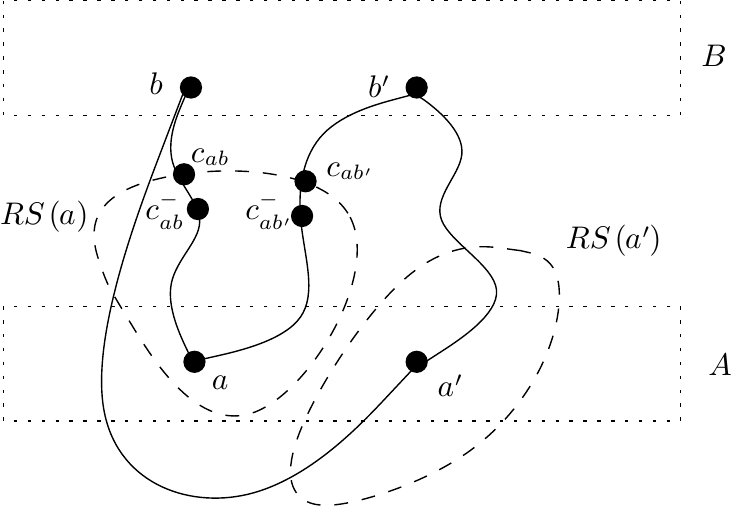}
\caption[scale=0.9]{Minimum paths with root sections (dashed parts), critical vertices and pre-critical vertices.}
\label{figrootsection}
\end{figure}

Let $A,B$ be a $d$-localized pair. In the following we consider the restriction of the graph to $\cup_{a \in A,b \in B} P_{ab}$. Recall that $P_{ab}$ is the minimum path with minimum lexicographic order from $a$ to $b$, also called the $ab$-path. Note that the sets $A$ and $B$ are not treated symmetrically since we only consider the minimum paths from $A$ to $B$. Let $a,a' \in A$ and $b \in B$. Note that, since $d(a,a')>\ell$, $d(a',b)> \ell$ and $d(a,b) <2\ell$, the vertex $a'$ does not belong to $P_{ab}$.

For every pair $a,b \in A \times B$, the \emph{critical vertex} $c_{ab}$ (resp. $c_{ba}$) is the vertex of $P_{ab}$ at distance $\ell-3$ from $a$ (resp. $b$) and the \emph{pre-critical vertex $c^{-}_{ab}$} is the vertex of $P_{ab}$ at distance $\ell-4$ from $a$ (see Figure~\ref{figrootsection}). Such vertices exist since $d(a,b) > \ell$. Moreover $c_{ab}$ and $c^{-}_{ab}$ are adjacent. Note that both $c_{ab}$ and $c_{ba}$ are vertices of $P_{ab}$. In the following, we mostly need the vertex $c_{ba}$ in order to ensure some distance properties (and then we do not use the minimality of the lexicographic order for these vertices). On the contrary, the vertex $c_{ab}$ will be used for both distance and lexicographic arguments. The \emph{root section of $a \in A$} (resp. $b \in B$), denoted by $RS(a)$ (resp. $RS(b)$), is the set of vertices of the $ac_{ab}$-subpaths (resp. $c_{ba}b$-subpaths) of $P_{ab}$ for all $b \in B$ (resp. $a \in A$). We denote by $RS(A)$ the set $\cup_{a \in A} RS(a)$.

Since $d(a,b) \leq 2\ell-7$, the vertex $c_{ba}$ precedes the vertex $c_{ab}$ on the path $P_{ab}$. In particular we have $P_{ab} \subseteq RS(a) \cup RS(b)$, hence every vertex of $G$ belongs to some root section. In fact, we have the slightly stronger following observation:

\begin{obs}\label{obscritical}
For every $a,b$ in $A \times B$, the critical vertex $c_{ab}$ and the pre-critical vertex $c^{-}_{ab}$ are in $RS(b)$.
\end{obs}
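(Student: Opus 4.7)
The plan is to verify the claim by a direct distance calculation on the minimum path $P_{ab}$, using the upper bound on $d(a,b)$ given by the $d$-localized hypothesis. By construction, $c_{ab}$ lies at distance $\ell-3$ from $a$ on $P_{ab}$, while $c_{ba}$ lies at distance $\ell-3$ from $b$, so that $c_{ba}$ is at distance $d(a,b)-\ell+3$ from $a$ along the same minimum path. Membership of $c_{ab}$ in the $c_{ba}b$-subpath is therefore equivalent to $c_{ba}$ preceding $c_{ab}$ on $P_{ab}$, i.e.\ to the inequality
\[ d(a,b)-\ell+3 \;\le\; \ell-3, \qquad \text{or equivalently}\qquad d(a,b) \;\le\; 2\ell-6. \]

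To obtain this inequality I would invoke the definition of a $d$-localized pair, which gives $d(a,b)\le 2\ell-2^{d+2}-3$. Since $2^{d+2}+3\ge 7$ for every $d\ge 0$, this yields $d(a,b)\le 2\ell-7 < 2\ell-6$, so $c_{ba}$ indeed precedes $c_{ab}$ on $P_{ab}$ and $c_{ab}\in RS(b)$.

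The argument for the pre-critical vertex $c^{-}_{ab}$ is identical, with the value $\ell-3$ replaced by $\ell-4$: now the required condition becomes $d(a,b)\le 2\ell-7$, which is exactly what the $d$-localized bound gives us. Hence $c^{-}_{ab}$ also lies on the $c_{ba}b$-subpath, i.e.\ $c^{-}_{ab}\in RS(b)$.

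There is no real obstacle here; the statement is a bookkeeping observation whose whole point is to convert the quantitative slack $2^{d+2}+3$ hidden in the definition of a $d$-localized pair into the concrete structural fact that the ``endgame'' vertices $c_{ab},c^{-}_{ab}$ of the path from $a$ to $b$ are already inside the root section of $b$. This placement will be crucial later when arguing that deletions localized at root sections of $b$ can disconnect $a$ from $b$ without affecting other paths.
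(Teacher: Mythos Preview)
Your proof is correct and is essentially the same argument the paper gives: in the paragraph immediately preceding the observation the authors note that $d(a,b)\le 2\ell-7$ forces $c_{ba}$ to precede $c_{ab}$ on $P_{ab}$, and the observation is just the slightly sharpened version of this (including $c^{-}_{ab}$), obtained by exactly the distance comparison you wrote down.
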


A $d$-localized pair $A,B$ is \emph{independent}, if for every $a,b \in A \times B$, the ball $B(c_{ab},\ell)$ intersects $A \cup B$ on $\{a,b\}$ and $B(c_{ba},\ell)\cap (A \cup B) =\{a,b\}$. A subpair of an independent pair is still independent. In addition, $A,B$ is still independent in the graph induced by the vertices of the $AB$-paths. 

\begin{lemma}\label{depol}
The size of a $d$-sparse and $d$-localized pair with no independent subpair of size $p$ is at most $2d \cdot p^3$.
\end{lemma}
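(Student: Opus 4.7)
The plan is to encode independence of subpairs as a proper-submatrix condition on a suitable interference matrix and then invoke Lemma~\ref{kpol}. The key definition will be the matrix $M=(A,B)$ with entries
\[ m(a,b) \;:=\; \bigl(B(c_{ab},\ell)\cup B(c_{ba},\ell)\bigr)\cap \bigl((A\cup B)\setminus\{a,b\}\bigr), \]
so that the entries of $M$ record exactly the obstructions to independence of a subpair.

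First I would bound the entry size. By Observation~\ref{obs:Blisomdual} combined with the $d$-sparseness of $A\cup B$, every ball of radius $\ell$ contains at most $d$ vertices of $A\cup B$. Both $a$ and $b$ lie in both balls $B(c_{ab},\ell)$ and $B(c_{ba},\ell)$: indeed $d(a,c_{ab})=\ell-3$ by definition, and $d(b,c_{ab})=d(a,b)-(\ell-3)\leq \ell-2^{d+2}\leq \ell$ by the $d$-localized upper bound on $d(a,b)$, and symmetrically for $c_{ba}$. Hence each of the two balls contributes at most $d-2$ elements of $A\cup B$ outside $\{a,b\}$, so $|m(a,b)|\leq 2(d-2)\leq 2d$ and $M$ is a $2d$-interference matrix. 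The delicate point is exactly this count: the constant $2\ell-2^{d+2}-3$ built into the definition of $d$-localized is what guarantees that both endpoints lie in each critical ball and are absorbed in the $d$-sparseness budget.

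Next I would observe that a submatrix of $M$ induced by $(A',B')$ is proper if and only if the subpair $A',B'$ is independent, since $m(a,b)\cap (A'\cup B')=\varnothing$ for every $(a,b)\in A'\times B'$ rewrites exactly as $B(c_{ab},\ell)\cap (A'\cup B')=\{a,b\}$ and $B(c_{ba},\ell)\cap (A'\cup B')=\{a,b\}$, using again that $a,b$ lie in both balls. Hence the hypothesis that $A,B$ has no independent subpair of size $p$ means $M$ admits no proper submatrix of size $p$. Assuming without loss of generality that $|A|\leq|B|$, I would restrict $M$ to any $|A|$ columns to obtain a $2d$-interference \emph{square} matrix of size $|A|$ still free of proper submatrices of size $p$, and Lemma~\ref{kpol} with $k=2d$ and $n=p$ yields $|A|<2d\cdot p^3$, which is the size of the pair.
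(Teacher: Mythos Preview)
Your proof is correct and follows essentially the same approach as the paper: encode independence via the interference matrix $m(a,b)=(I(c_{ab})\cup I(c_{ba}))\setminus\{a,b\}$, observe it is a $2d$-interference matrix by $d$-sparseness, and apply Lemma~\ref{kpol}. Your version is in fact slightly more careful than the paper's, since you explicitly justify the passage to a square matrix and verify that $a,b$ lie in both critical balls (the paper simply bounds $|I(u)|\le d$ and concludes).
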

\begin{proof}
Let $A,B$ be a $d$-sparse and $d$-localized pair of size $2d \cdot p^3+1$. For every vertex~$u$, $I(u)$ denotes $B(u,\ell) \cap (A \cup B)$. Since $A \cap B = \varnothing$, the matrix $M=(A,B)$ where $m(a,b)=(I(c_{ab}) \cup I(c_{ba})) \backslash \{a,b\}$, is a well-defined interference matrix.
The pair $A,B$ is $d$-sparse, then $|I(u)| \leq d$ for every vertex $u$. Thus $M$ is a $2d$-interference matrix. 

By Lemma~\ref{kpol}, $M$ has a proper submatrix $(A',B')$ of size $p$. Thus for every $a',b' \in A' \times B'$, $B(c_{a'b'},\ell) \cap (A' \cup B') = \{ a',b'\}$ and the same holds for $c_{b'a'}$, \emph{i.e.} $A',B'$ is independent.
\end{proof}

\begin{lemma}\label{nonintche}
Let $A,B$ be an independent pair.
\begin{enumerate}[(a)]
\item Every pair of vertices of endpoints disjoint $AB$-paths are at distance at least $4$. 
\item For every pair $a,a'$ in $A$ (resp. $b,b'$ in $B$), $d(RS(a),RS(a')) \geq 4$ (resp. $d(RS(b),RS(b') \geq 4$).
\end{enumerate}
\end{lemma}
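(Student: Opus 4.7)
I plan to prove both parts by a single triangle-inequality argument powered by the defining identity of independence, $B(c_{xy},\ell)\cap (A\cup B)=\{x,y\}$. The key observation I will use is that if $w$ lies on the $xc_{xy}$-subpath of $P_{xy}$, then because $P_{xy}$ is a shortest path, $d(w,c_{xy}) = (\ell-3) - d(w,x)$. The $d$-localization bound $d(a,b)\le 2\ell-2^{d+2}-3\le 2\ell-7$ ensures that the closer endpoint on any $AB$-path is at distance at most $\ell-4$, leaving exactly three units of slack to absorb a hypothetical jump $d(u,v)\le 3$.

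\medskip
\noindent\emph{Part (a).} Given $u\in P_{a_1b_1}$, $v\in P_{a_2b_2}$ with disjoint endpoints, I suppose for contradiction $d(u,v)\le 3$. Up to swapping $a_i$ and $b_i$ (which only swaps $c_{a_ib_i}$ with $c_{b_ia_i}$, both satisfying the independence identity), I may assume $d(u,a_1)\le d(u,b_1)$ and $d(v,a_2)\le d(v,b_2)$. Since the $d$-localization bound yields $d(u,a_1)\le \ell-4<\ell-3$, the core observation applies to both $u$ and $v$. Two triangle-inequality applications then give
\begin{align*}
d(c_{a_1b_1},a_2) &\le d(c_{a_1b_1},u)+d(u,v)+d(v,a_2) \le \ell - d(u,a_1) + d(v,a_2),\\
d(c_{a_2b_2},a_1) &\le \ell - d(v,a_2) + d(u,a_1),
\end{align*}
whose sum is $2\ell$. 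So at least one is $\le\ell$, placing the corresponding endpoint ($a_2$ or $a_1$) in a forbidden ball: it lies in $A\cup B$ yet outside the allowed pair, contradicting independence.

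\medskip
\noindent\emph{Part (b).} I will run the same computation in the language of root sections. Fix $a\ne a'$ in $A$, and let $u\in RS(a)$, $u'\in RS(a')$ be witnessed by some $b,b'\in B$. The identical triangle-inequality steps produce
\[
d(c_{ab},a') \le \ell - d(u,a) + d(u',a'), \qquad d(c_{a'b'},a) \le \ell - d(u',a') + d(u,a),
\]
so one side is $\le\ell$. The corresponding endpoint then lies in the ball, but $a'\in A$ is neither $a$ (by hypothesis) nor $b$ (since $A\cap B=\emptyset$), so $a'\notin\{a,b\}$, a contradiction. The mirror statement for two vertices of $B$ is proved identically using $c_{ba}$ and $c_{b'a'}$.

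\medskip
\noindent\emph{Main obstacle.} There is no serious obstacle: the argument is essentially arithmetic bookkeeping once the correct critical vertices are identified. The delicate point --- if anything --- is noticing that the $d$-localization threshold $2\ell-2^{d+2}-3$ is calibrated precisely so that $(\ell-3)+3=\ell$ closes the triangle, and that both $c_{a_ib_i}$ and $c_{b_ia_i}$ are interchangeable thanks to the symmetric form of the independence identity.
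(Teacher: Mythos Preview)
Your argument is correct and rests on the same mechanism as the paper's proof: the triangle inequality combined with the independence identity $B(c_{ab},\ell)\cap(A\cup B)=\{a,b\}$ and the fact that on a shortest path $d(u,c_{ab})=(\ell-3)-d(u,a)$.

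The only organizational difference is this: the paper fixes a WLOG ordering (say $d(a,u)\le d(a',u')$) and derives a single chain
\[
\ell < d(a,c_{a'b'}) \le d(a,u)+d(u,u')+d(u',c_{a'b'}) \le (\ell-3)+d(u,u'),
\]
proving (b) first and then invoking (b) to reduce (a) to the mixed case $u\in RS(a)$, $u'\in RS(b')$. You instead run the two symmetric bounds and sum them to $2\ell$, which lets you treat (a) directly without appealing to (b). Both routes are equally short; yours has the small advantage that (a) and (b) become logically independent, while the paper's WLOG avoids the averaging step. Neither buys anything substantial over the other.
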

\begin{proof}
Let us first prove (b). We prove it for vertices of $A$, the case of vertices of $B$ will handle symmetrically (indeed the proof rely on distance arguments and not lexicographic ones).
Let $a\neq a'$ with $u \in RS(a)$ and $u' \in RS(a')$. There exists $b$ and $b'$ in $B$ such that $u$ is in the prefix path on $c_{ab}$ of the $ab$-path and $u'$ is in the prefix path on $c_{a'b'}$ of the $a'b'$-path. Free to exchange $a$ and $a'$, $d(a,u) \leq d(a',u')$. Since $d(a',c_{a'b'})=\ell-3$, we have $d(a,u)+d(u',c_{a'b'}) \leq d(a',u')+d(u',c_{a'b'})=\ell-3$. Since $A,B$ is independent, $d(a,c_{a'b'}) >\ell$, we have $\ell < d(a,u)+d(u,u')+d(u',c_{a'b'}) \leq \ell-3 +d(u,u')$, and then $d(u,u') \geq 4$. So (b) holds.

Let $u$ be a vertex of the $ab$-path, and $u'$ be a vertex of the $a'b'$-path such that $a \neq a'$ and $b \neq b'$. By part (b) of Lemma~\ref{nonintche} we may assume without loss of generality that $u \in RS(a)$ and $u' \in RS(b')$. In addition, we can assume that $d(a,u) \leq d(b',u')$. So $d(a,u)+d(u',c_{b'a'}) \leq d(b',u')+d(u',c_{b'a'})=\ell-3$. So $\ell < d(a,c_{a'b'}) \leq d(a,u)+d(u,u')+d(u',c_{b'a'}) \leq \ell-3 +d(u,u')$. Hence $d(u,u') \geq 4$.
\end{proof}

An edge \emph{leaves} a set $S$ if exactly one of its endpoints is in $S$. 

\begin{obs}\label{obscritical2}
Let $A,B$ be an independent pair and $a \in A$. For all $b \neq b'$, we have $c^{-}_{ab} \neq c^{-}_{ab'}$ (and then $c_{ab} \neq c_{ab'}$). Moreover the edges of the $aB$-paths leaving $RS(a)$ form an induced matching.
\end{obs}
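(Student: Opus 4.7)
The plan is to exploit independence of $A,B$ --- which forces $B(c_{ab},\ell) \cap (A \cup B) = \{a,b\}$ --- together with the $d$-localization bound $d(a,b) \leq 2\ell - 2^{d+2} - 3 \leq 2\ell - 7$, to rule out any accidental coincidence or chord among the vertices $c^{-}_{ab}$, $c_{ab}$ and the successor of $c_{ab}$ on $P_{ab}$. The recurring estimate is that, since $c_{ab}$ lies on the minimum path $P_{ab}$, one has $d(c_{ab}, b) = d(a,b) - (\ell-3) \leq \ell - 4$, so four units of slack are available whenever one applies the triangle inequality to reach $b$ from a neighbor of $c_{ab}$.

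For the first assertion, I would argue by contradiction: suppose $c^{-}_{ab} = c^{-}_{ab'} = c$ for some $b \neq b'$ in $B$. Then both $c_{ab}$ and $c_{ab'}$ are neighbors of $c$, so
\[ d(c_{ab'}, b) \leq d(c_{ab'}, c) + d(c, c_{ab}) + d(c_{ab}, b) \leq 1 + 1 + (\ell - 4) = \ell - 2 \leq \ell,\]
placing $b$ in $B(c_{ab'},\ell) \cap (A \cup B)$. Since $b \notin \{a, b'\}$, this contradicts independence. The parenthetical consequence $c_{ab} \neq c_{ab'}$ is then immediate: if $c_{ab} = c_{ab'}$, the set $B(c_{ab},\ell) \cap (A \cup B)$ would have to equal both $\{a,b\}$ and $\{a,b'\}$.

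For the second assertion, let $v_{ab}$ denote the successor of $c_{ab}$ on $P_{ab}$, lying at distance $\ell-2$ from $a$. Prefixes of $P_{ab}$ of length at most $\ell-3$ lie in $RS(a)$, whereas any vertex of $P_{ab}$ at distance $\ell-2$ or more from $a$ cannot be in $RS(a)$ (otherwise it would also lie on some $P_{ab''}$ at distance $\leq \ell-3$ from $a$, contradicting the minimality of $P_{ab}$). Hence the unique edge of $P_{ab}$ leaving $RS(a)$ is $c_{ab}v_{ab}$. I must then check, for $b \neq b'$, that (i) $\{c_{ab}, v_{ab}\} \cap \{c_{ab'}, v_{ab'}\} = \varnothing$ and (ii) no chord connects $\{c_{ab},v_{ab}\}$ to $\{c_{ab'},v_{ab'}\}$. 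For (i): $c_{ab} \neq c_{ab'}$ is established above; $c_{ab} \neq v_{ab'}$ (and symmetrically) because these two vertices sit at different distances ($\ell-3$ versus $\ell-2$) from $a$; and $v_{ab} = v_{ab'}$ would give $d(c_{ab}, b') \leq 1 + (d(a,b') - (\ell-2)) \leq \ell - 4$, again contradicting independence. For (ii), each of the four putative chords yields, by a single triangle inequality, an estimate of the form $d(c_{ab}, b') \leq \ell - 3$ or $d(c_{ab'}, b) \leq \ell - 3$, hence a contradiction with independence.

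The main obstacle is not conceptual but purely bookkeeping: one must enumerate the few coincidence and chord configurations, each of which reduces to the same one-line triangle-inequality computation where the slack $2^{d+2} \geq 4$ built into $d$-localization comfortably absorbs one or two unit detours.
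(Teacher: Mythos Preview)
Your argument is correct. The key estimates all go through with room to spare, and the enumeration of coincidences and chords for the induced-matching part is complete.

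Your route differs from the paper's, however. The paper does not recompute distances from scratch; instead it invokes two earlier facts: Observation~\ref{obscritical}, which places both $c^{-}_{ab}$ and $c_{ab}$ (and hence the whole leaving edge of $P_{ab}$) inside $RS(b)$, and Lemma~\ref{nonintche}(b), which asserts $d(RS(b),RS(b'))\geq 4$ for $b\neq b'$. From these, $c^{-}_{ab}\neq c^{-}_{ab'}$ is immediate (they lie in disjoint root sections), and the induced-matching claim follows in one stroke since the two leaving edges sit in root sections that are at distance at least~$4$. The intermediate step that the edges form a (not yet induced) matching is attributed to lexicographic minimality; and the implication $c^{-}_{ab}\neq c^{-}_{ab'}\Rightarrow c_{ab}\neq c_{ab'}$ is drawn from the fact that lexicographically minimum paths to the same source, once they separate, never meet again. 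What your direct computations do is essentially re-derive the relevant special cases of Lemma~\ref{nonintche}(b) by hand. This is perfectly valid and self-contained, but the paper's approach is shorter because it cashes in the structural lemma already on the books; on the other hand, your proof makes the numeric slack $2^{d+2}\geq 4$ visible, which is useful for understanding why the constants in the definition of $d$-localized were chosen as they were.
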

Recall that, by lexicographic minimality, when two $aB$-paths separate, they never meet again, so if  $c^{-}_{ab} \neq c^{-}_{ab'}$, we immediately have $c_{ab} \neq c_{ab'}$.
\begin{proof}
Observation~\ref{obscritical} ensures that $c^{-}_{ab} \in RS(b)$ and $c^{-}_{ab'} \in RS(b')$. So Lemma~\ref{nonintche}(b) ensures that $c^{-}_{ab} \neq c^{-}_{ab'}$. The lexicographic minimality ensures that edges of $aB$-paths leaving $RS(a)$ are vertex disjoint, \emph{i.e.} they form a (non necessarily induced) matching. By Observation~\ref{obscritical}, the edge of $P_{ab}$ leaving $RS(a)$ is an edge with both endpoints in $RS(b)$. Thus Lemma~\ref{nonintche}(b) ensures that the matching is induced.
\end{proof}

\subsection{Escape property}\label{secescape}

\begin{figure}
\center\includegraphics[scale=0.85]{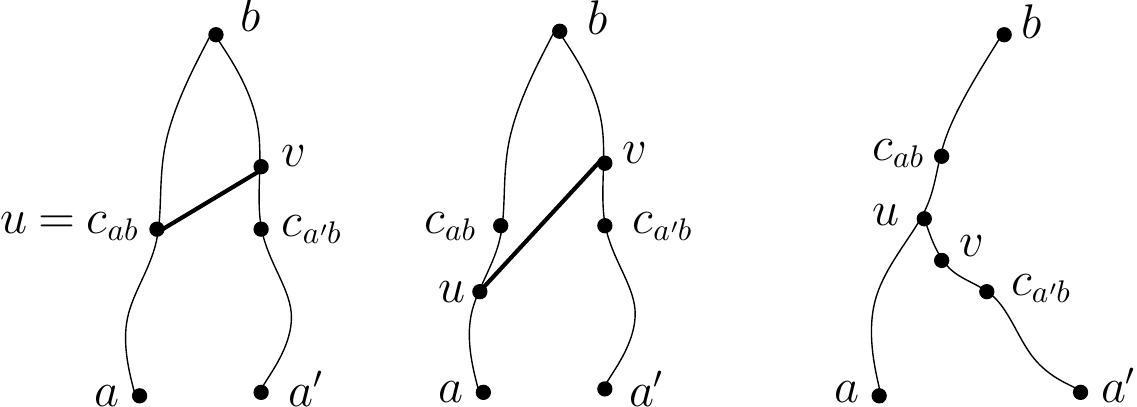}
\caption{Examples of escapes. The right one is an edge of the $a'b$-path.}
\label{dessinshort}
\end{figure}
Let $A,B$ be an independent pair. In the following we consider the restriction of the graph to the vertices of the $AB$-paths.
Let $a$ in $A$. An \emph{escape $uv$ from $a$} is an edge leaving $RS(a)$ such that $uv$ is not an edge of any $P_{ab}$ for $b \in B$.
By convention, when $uv$ is an escape from $a$, we still denote by $u$ the vertex in $RS(a)$ and by $v$ the vertex which is not in $RS(a)$. The vertex $u$ is called the \emph{beginning} of the escape and $v$ the \emph{end} of the escape. 

Let $uv$ be an escape from $a$. Since $u \in RS(a)$, there exists $b \in B$ such that the vertex $u$ is in $P_{ab}$. Since we have considered the restriction of the graph to the vertices of the $AB$-paths, the vertex $v$ is in the path $P_{a'b'}$ for $a' \in A$ and $b' \in B$. Lemma~\ref{nonintche}(a) ensures that either $a=a'$ or $b=b'$. If $a=a'$ then $d(a,v)>\ell-3$ (otherwise $v$ would be in $RS(a)$). So we have $d(a,u)=\ell-3$ since $u \in RS(a)$ and $uv$ is an edge. Though the induced matching property of Observation~\ref{obscritical2} ensures that there is no edge between $c_{ab}$ and $v$ (otherwise the edges leaving $P_{ab}$ and $P_{ab'}$ do not form an induced matching). So $a \neq a'$, \emph{i.e.} $b=b'$.  Thus every escape $uv$ an \emph{escape from $a$ to $a'$ for $b$}. In Figure~\ref{dessinshort}, the edges $uv$ are escapes from $a$ to $a'$ for $b$.
An escape can be an edge of a minimum path (see the rightmost example of Figure~\ref{dessinshort}).

A \emph{deep escape} is an escape such that $u$ is neither a critical vertex nor a pre-critical vertex. Let us define two graphs: the \emph{escape graph of $b$} (resp. \emph{deep escape graph of $b$}) is a directed graph with vertex set $A$ where $aa'$ is an arc if there is an escape (resp. a deep escape) from $a$ to $a'$ for $b$. In Figure~\ref{dessinshort}, the leftmost escape is not a deep escape since $u=c_{ab}$.

If a vertex $x$ which is not in $RS(A)$ has a neighbor in $RS(a)$, $a$ is called \emph{an origin root section} on $x$. Lemma~\ref{nonintche}(b) ensures that every vertex has at most one origin root section (otherwise two root sections would be at distance $2$). Note that if $uv$ is an escape from $a$, then $a$ is the origin root section of $v$. 

Let us informally explain why we introduce escapes. As long as a path from $a$ to $B$ follow edges of $aB$-paths, then we can understand the structure of the path. In particular, if such a path passes through a critical (or pre-critical vertex) we can ``evaluate'' its length using the fact that $d(a,c_{ab})=\ell-3$. If a path uses an escape, it can ``escape'' $RS(a)$ without passing through such a vertex, which implies that the length of the path is somehow harder to evaluate. Let us first show that the structure of the (deep) escape graph can be constraint.

\begin{lemma}\label{acyclic}
Let $A,B$ be an independent pair. For every $b \in B$, the escape graph of $b$ has no circuit.
\end{lemma}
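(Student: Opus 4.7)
The plan is to proceed by contradiction. Suppose the escape graph of some $b \in B$ contains a directed cycle; take a shortest one, $a_1 \to a_2 \to \cdots \to a_k \to a_1$, so that the $a_i$ are pairwise distinct and $k \geq 2$ (escapes go between distinct vertices of $A$, so no self-loops). For each $i \in \{1,\dots,k\}$ (with $a_{k+1}:=a_1$), let $u_iv_i$ be the escape edge realizing the arc $a_i \to a_{i+1}$, with $u_i \in P_{a_ib} \cap RS(a_i)$ and $v_i \in P_{a_{i+1}b}$, and set $\lambda_i := |P_{a_ib}| = d(a_i,b)$.

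The core estimate is $\lambda_i \leq \lambda_{i+1}$ for every $i$. First, $v_i \notin RS(a_{i+1})$: otherwise Lemma~\ref{nonintche}(b) would give $d(u_i,v_i) \geq d(RS(a_i),RS(a_{i+1})) \geq 4$, impossible since $u_iv_i$ is an edge. Hence $d(a_{i+1},v_i) \geq \ell-2$, so $d(v_i,b) = \lambda_{i+1}-d(a_{i+1},v_i) \leq \lambda_{i+1}-(\ell-2)$. Combining this with $d(a_i,u_i) \leq \ell-3$ (since $u_i \in RS(a_i) \cap P_{a_ib}$) and noting that the concatenation of the $a_iu_i$-prefix of $P_{a_ib}$, the edge $u_iv_i$, and the $v_ib$-suffix of $P_{a_{i+1}b}$ is an $a_ib$-walk, I obtain
\[
\lambda_i \;\leq\; d(a_i,u_i) + 1 + d(v_i,b) \;\leq\; (\ell-3) + 1 + (\lambda_{i+1}-(\ell-2)) \;=\; \lambda_{i+1}.
\]

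Traversing the cycle forces $\lambda_1 \leq \lambda_2 \leq \cdots \leq \lambda_k \leq \lambda_1$, so all $\lambda_i$ coincide and the two inequalities above are equalities for every $i$. This pins down $d(a_i,u_i) = \ell-3$, i.e.\ $u_i = c_{a_ib}$, and $d(a_{i+1},v_i)=\ell-2$, i.e.\ $v_i$ is the vertex of $P_{a_{i+1}b}$ immediately after $c_{a_{i+1}b}$ (going towards $b$); in particular $d(v_i,c_{a_{i+1}b})=1$. The contradiction is then immediate from the triangle inequality,
\[
d(c_{a_ib},c_{a_{i+1}b}) \;\leq\; d(c_{a_ib},v_i) + d(v_i,c_{a_{i+1}b}) \;=\; 1+1 \;=\; 2,
\]
while Lemma~\ref{nonintche}(b), applied to the distinct elements $a_i,a_{i+1}$ of $A$, forces $d(c_{a_ib},c_{a_{i+1}b}) \geq d(RS(a_i),RS(a_{i+1})) \geq 4$.

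The step demanding the most care is the tight equality analysis: the rigidity $u_i=c_{a_ib}$ and $v_i$ being the very successor of $c_{a_{i+1}b}$ is exactly what creates the two-versus-four gap in the final triangle inequality. Any slack of one unit in either $d(a_i,u_i)$ or $d(a_{i+1},v_i)$ would push $d(c_{a_ib},c_{a_{i+1}b})$ up to a value compatible with Lemma~\ref{nonintche}(b), and the contradiction would fail; this is why it is essential to set up the bound $\lambda_i \leq \lambda_{i+1}$ so that the slack in both $\ell-3$ and $\ell-2$ is squeezed out simultaneously.
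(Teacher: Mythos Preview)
Your argument is correct but takes a longer route than the paper. The paper tracks the quantity $d(b,u_i)$ rather than $\lambda_i=d(a_i,b)$: since $u_{i+1}$ and $v_i$ both lie on $P_{a_{i+1}b}$ with $u_{i+1}$ strictly before $v_i$ (this is exactly your observation that $v_i\notin RS(a_{i+1})$), and since $d(u_i,u_{i+1})\ge 4$ forces $d(v_i,u_{i+1})\ge 3$, one gets the \emph{strict} chain
\[
d(b,u_i)\;\le\; d(b,v_i)+1\;<\;d(b,v_i)+d(v_i,u_{i+1})\;=\;d(b,u_{i+1}),
\]
and the cycle dies immediately from $d(b,u_0)<d(b,u_0)$. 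Your version instead obtains only $\lambda_i\le\lambda_{i+1}$, which is why you need the additional equality analysis pinning $u_i=c_{a_ib}$ and $d(v_i,c_{a_{i+1}b})=1$ before invoking Lemma~\ref{nonintche}(b) again. Both approaches rest on the same key lemma and the same fact that $v_i$ lies beyond $c_{a_{i+1}b}$ on $P_{a_{i+1}b}$; the paper's choice of potential simply absorbs your tightness step into a single strict inequality.
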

\begin{proof}
Assume that there is a circuit $a_0,a_1,\ldots,a_k,a_0$. In the following indices have to be understood modulo $k+1$. For every $i$, let $u_iv_i$ be an escape from $a_i$ to $a_{i+1}$ for $b$. Since $u_{i} \in RS(a_{i})$ and $u_{i+1} \in RS(a_{i+1})$, Lemma~\ref{nonintche}(b) ensures that $d(u_{i},u_{i+1}) \geq 4$, then $d(v_{i},u_{i+1}) \geq 3$. Hence $d(b,u_{i})\leq d(b,v_{i})+1 < d(b,v_i)+d(v_i,u_{i+1}) = d(b,u_{i+1})$. The first inequality comes from the fact that $u_iv_i$ is an edge and the last equality comes from the fact that the path is a minimum path. A propagation of these inequalities along the arcs of the circuit leads to $d(b,u_0)<d(b,u_0)$, a contradiction.
\end{proof}

The deep escape graph of $b$ is a subgraph, in the sense of arcs, of the escape graph of $b$. Thus the deep escape graph of $b$ has no circuit. For every $b$, the \emph{order inherited from $b$} is a partial order on $A$ such that $a < a'$ if and only if there is an escape from $a$ to $a'$ for $b$. An independent pair $A,B$ has the \emph{escape property} if for every $b \in B$, the deep escape graph of $b$ is a transitive tournament.

\begin{lemma}\label{sgclique}
The size of an independent pair with no subpair of size $2^{d+1}$ satisfying the escape property is at most $r_{2^{2^{d+2}}}(2^{d+1})$.
\end{lemma}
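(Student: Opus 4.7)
The plan is to apply Ramsey to pairs in $A$, using a coloring that records for each $b$ in a preselected subset $B^{*}\subseteq B$ the direction of the deep escape (if any) between the pair's endpoints, and then argue that the resulting monochromatic configuration realizes the escape property on a subpair.

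Suppose $A,B$ is an independent pair of size exceeding $r_{2^{2^{d+2}}}(2^{d+1})$; without loss of generality $|A|=|B|$. Fix an arbitrary subset $B^{*}\subseteq B$ with $|B^{*}|=2^{d+1}$ and a total ordering of $A$. For each $\{a_{1},a_{2}\}\in\binom{A}{2}$ with $a_{1}<a_{2}$, assign as color the function $\chi:B^{*}\to\{0,1\}^{2}$ whose value at $b$ records, in the first coordinate, whether a deep escape from $a_{1}$ to $a_{2}$ for $b$ exists, and in the second coordinate, whether a deep escape from $a_{2}$ to $a_{1}$ for $b$ exists. By Lemma~\ref{acyclic} applied to the escape graph (of which the deep escape graph is a subgraph), the value $(1,1)$ never occurs, so there are at most $4^{|B^{*}|}=2^{2^{d+2}}$ distinct colors. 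Theorem~\ref{ramsey} then yields a monochromatic clique $A^{*}\subseteq A$ of size $2^{d+1}$; let $\chi^{*}$ be its color.

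Granting for the moment that $\chi^{*}(b)\neq(0,0)$ for every $b\in B^{*}$, we have $\chi^{*}(b)\in\{(1,0),(0,1)\}$, so the deep escape graph of $b$ restricted to $A^{*}$ is the linear tournament induced by the fixed order on $A$ (in one direction or the other). Since the deep escape graph is contained in the acyclic escape graph, any tournament it contains is acyclic, and an acyclic tournament is transitive. Thus $(A^{*},B^{*})$ is a subpair of size $2^{d+1}$ having the escape property, contradicting the lemma's hypothesis.

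The main obstacle is ruling out $\chi^{*}(b)=(0,0)$ for any $b\in B^{*}$. Suppose it held for some $b$; then for every pair $a_{i},a_{j}\in A^{*}$ there is no deep escape in either direction between them for $b$. I would combine this lack of escapes with the pairwise distance-$4$ separation of root sections given by Lemma~\ref{nonintche} to construct, for some $A'\subseteq A^{*}$ of size $d+1$, an $\ell$-disconnecting family $(S_{a,b})_{a\in A',\,b\in B^{*}}$ whose separators are small vertex sets built around the critical vertices $c_{a,b}$. The no-deep-escape condition should guarantee that only the removal of $S_{a,b}$ can push $d(a,b)$ above $\ell$, while the root-section separation ensures that distinct separators do not interact. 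Applying Theorem~\ref{bigvc} to $A'$ and $B^{*}$ (of sizes $d+1$ and $2^{d+1}=2^{|A'|}$) would then force the distance VC-dimension to exceed $d$, a contradiction. The definition of the separators $S_{a,b}$ and the verification of the $\ell$-disconnecting conditions constitute the technical heart of the proof.
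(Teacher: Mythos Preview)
Your Ramsey setup is close to the paper's, but the structure of your coloring creates a genuine gap in the $(0,0)$ case that you cannot close the way you propose.

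The paper picks $B'\subseteq B$ of size $2^{d+2}$ and colors each unordered pair in $A$ by a single bit per $b\in B'$ recording whether a deep escape exists (in either direction). After Ramsey yields a monochromatic $X$ of size $2^{d+1}$, pigeonhole on the $2^{d+2}$ bits of the common color gives $Z\subseteq B'$ of size $2^{d+1}$ on which \emph{all} bits agree. In the all-zero case one then has, simultaneously for every $z\in Z$, no deep escape between any two vertices of $X$; this is exactly what the private-part construction $PP(x,z)$ needs to produce a $(2\ell-5)$-disconnecting family for $X,Z$, and Theorem~\ref{bigvc} applied with $|A'|=d+1$ and $|Z|=2^{d+1}$ gives the contradiction.

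Your version fixes $|B^{*}|=2^{d+1}$ up front and then tries to exclude $\chi^{*}(b)=(0,0)$ one $b$ at a time. But from a single $b_{0}$ with $\chi^{*}(b_{0})=(0,0)$ you only learn that there are no deep escapes among $A^{*}$ \emph{for that one} $b_{0}$. The disconnecting family you sketch, $(S_{a,b})_{a\in A',\,b\in B^{*}}$, requires controlling short $ab$-paths for \emph{every} $b\in B^{*}$, and for $b\neq b_{0}$ you have no restriction on deep escapes at all; the separators ``built around the critical vertices $c_{a,b}$'' can then be bypassed by deep escapes, exactly the phenomenon the private-part argument (Claim~\ref{relsect} and the paragraph following it) needs the no-deep-escape hypothesis to forbid. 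Nor can you retreat to using only $b_{0}$: with $|B|=1$ Theorem~\ref{bigvc} yields nothing useful. In short, the no-deep-escape hypothesis must hold across a $B$-side set of size $2^{d+1}$, and your coloring does not deliver such a set. The fix is precisely the paper's: double the preselected $B$-side to size $2^{d+2}$, drop the direction bits (one existence bit per $b$ suffices, since acyclicity already makes any tournament transitive), and pigeonhole after Ramsey to extract $Z$ of size $2^{d+1}$ with a uniform bit.
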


\begin{proof}
Let $(A,B)$ be an independent pair of size $r_{2^{2^{d+2}}}(2^{d+1})+1$.
\begin{claim}\label{cliqueorstable}
$A,B$ has a subpair $X,Z$ of size $2^{d+1}$ such that:
\begin{enumerate}[(1)]
\item either the pair $X,Z$ does not contain a deep escape,
\item or the pair $X,Z$ satisfies the escape property.
\end{enumerate}
\end{claim}
\begin{proof}
Let $B'=\{b_1,\ldots,b_{2^{d+2}}\}$ be a subset of $B$ of size $2^{d+2}$. Consider the complete edge-colored graph $G'$ on vertex set $A$. The colors are binary integers of $2^{d+2}$ digits. The $i$-th digit of the color of $aa'$ is $1$ if there is a deep escape from $a$ to $a'$ (or from $a'$ to $a$) for $b_i$ and $0$ otherwise. 
Theorem~\ref{ramsey} ensures that $G'$ contains a monochromatic clique $X$ of size $2^{d+1}$. Let us denote by $c$ the color of the edges of $G'[X]$. 
At least $2^{d+1}$ digits of $c$ are equal. Denote by $Z$ the subset of $B'$ corresponding to these digits. If the digits equal $0$ then (1) holds, otherwise (2) holds.
\end{proof}

Let us prove by contradiction that Claim~\ref{cliqueorstable}(1) cannot hold. Let $X,Z$ be an independent pair with no deep escape. Consider the restriction of the graph to $\bigcup_{x,z} P_{xz}$.  For every $x,z$, the \emph{private part of $xz$}, denoted by $PP(x,z)$, is the set of vertices which belong to $P_{xz}$ and which do not belong to any other path in $P_{XZ}$.

\begin{claim}\label{relsect}
 $PP(x,z)$ separates $x$ from $c_{xz}$ and from $c^{-}_{xz}$ in the graph induced by $RS(x)$.
\end{claim}
\begin{proof}
Let $P$ be a path from $x$ to $c_{xz}$ in $RS(x)$ and let $u$ be the last vertex of $P$ which is on $P_{xz'}$ for $z' \neq z$. The vertex $u$ exists since $c_{xz}\neq c_{xz'}$ and $x \in P_{xz'}$ for every $z' \neq z$. Let $v$ be the vertex after $u$ in $P$. By maximality of $u$, the vertex $v$ is in $P_{xz}$ (since $v \in RS(x)$). So if $v \notin PP(x,z)$ then $v \in P_{x'z''}$ for some $x' \neq x$. By Lemma~\ref{nonintche}(a), we have $z=z''$. Thus a vertex of $P_{xz'}$ and a vertex of $P_{x'z}$ are adjacent, contradicting Lemma~\ref{nonintche}(a).

Let $P$ be a path from $x$ to $c^{-}_{xz}$ which does not pass through $PP(x,z)$. Since $Pc^{-}_{xz}c_{xz}$ is a path from $x$ to $c_{xz}$, the first part of the proof ensures that $c_{xz} \in PP(x,z)$. Since $c^{-}_{xz} \notin PP(x,z)$, the lexicographic minimality ensures that $c^{-}_{xz}$ is in $P_{x'z'}$ for $z \neq z'$. Lemma~\ref{nonintche}(a) ensures that $x=x'$. By Observation~\ref{obscritical}, we have $c^{-}_{xz} \in RS(z')$ and $c_{xz} \in RS(z)$, contradicting Lemma~\ref{nonintche}(b).
\end{proof}

Let us finally prove that $X,Z$ is $(2\ell-5)$-disconnectable with the sets $PP(x,z)$. Let $x,z \in X,Z$. Since $X,Z$ is $d$-localized, $P_{xz}$ has length at most $2\ell-7$. In addition $PP(x,z)$ does not intersect $P_{x'z'}$ if $x\neq x'$ or $z \neq z'$; so the deletion of $PP(x,z)$ does not delete all the paths from $x'z'$ of length at most $2\ell-7$. Let us finally show that all the paths of length at most $2\ell-5$ from $x$ to $z$ pass through $PP(x,z)$. 

Since there is no deep escape, any edge leaving $RS(x)$ intersects a critical or a pre-critical vertex. 
By independence, if $z \neq z'$ then we have $d(c_{xz'},z) \geq \ell+1$ and $d(c^{-}_{xz'},z) \geq \ell$. Moreover, we have $d(x,c_{xz'}) =\ell-3$ and $d(x,c^{-}_{xz'})=\ell-4$. Thus the length a path from $x$ to $z$ passing through $c_{xz'}$ or $c^{-}_{xz'}$ is at least $2\ell-4$. Therefore every path of length at most $2\ell-5$ from $x$ to $z$ passes through $c_{xz}$ or $c^{-}_{xz}$. 
By Claim~\ref{relsect}, there is no path of length at most $2\ell-5$ from $x$ to $z$ in $G[V \backslash PP(x,z)]$. 
By Theorem~\ref{bigvc}, the distance VC-dimension is at least $(d+1)$, a contradiction. So case~(1) of Claim~\ref{cliqueorstable} cannot hold, \emph{i.e.} case~(2) holds.
\end{proof}


\subsection{Escape property implies large distance VC-dimension}\label{secvcescape}
The outline of the proof of Lemma~\ref{sgclique} consisted in finding a $(2\ell-5)$-disconnecting pair. The approach is the same when the escape property holds even if the proof is more involved.

\begin{figure}
\centering 
\includegraphics[scale=0.85]{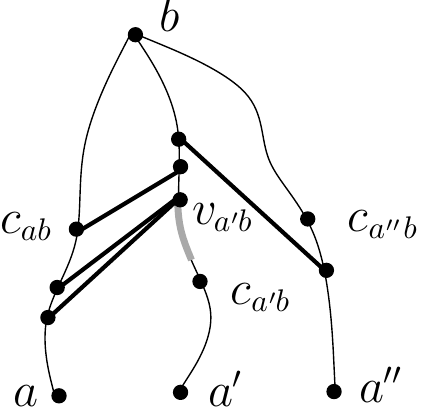}
\caption{The vertex $v$ is the incoming vertex of the $a'b$-path. The gray part (where $v_{a'b}$ is included but not $c_{a'b}$) is the free section of the $a'b$-path.}
\label{fig:firstescape}
\end{figure}

Definitions of this paragraph are illustrated in Figure~\ref{fig:firstescape}. Let $a',b \in A \times B$. The \emph{incoming vertex} $v_{a'b}$ of the $a'b$-path is the first vertex in $P_{a'b}$ (from $a'$ to $b$) for which there exists an escape $u_{a'b}v_{a'b}$ from $a$ to $a'$ for $b$ for some $a \in A$. In other words, it is the first vertex of $P_{a'b}$ at distance one from $RS(A)\setminus RS(a')$. The edge $u_{a'b}v_{a'b}$ is a \emph{first-in escape to $a'$ for $b$}. Note that several first-in escapes to $a'$ can exist, but the incoming vertex is unique. The \emph{free section} of the $a'b$-path, denoted by $FS(a',b)$, is the $c_{a'b}v_{a'b}$-subpath of the $a'b$-path where $c_{a'b}$ is not included but $v_{a'b}$ is included. Lemma~\ref{nonintche}(b) ensures that the free section exists and has length at least $3$.

\begin{lemma}\label{empty1}
Let $A,B$ be a pair satisfying the escape property. Then there is no edge between two free sections of $AB$-paths.
\end{lemma}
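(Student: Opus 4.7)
The plan is to argue by contradiction: suppose $xy$ is an edge with $x \in FS(a_1, b_1)$ and $y \in FS(a_2, b_2)$, $(a_1,b_1)\ne(a_2,b_2)$. Since $c_{ba}$ precedes $c_{ab}$ on every $P_{ab}$, each free section lies inside $RS(b)$, so Lemma~\ref{nonintche}(b) forces $b_1 = b_2 =: b$ (otherwise $d(RS(b_1),RS(b_2))\ge 4$ would contradict the edge $xy$). Hence $a_1\ne a_2$; by the escape property, the deep escape graph of $b$ is a transitive tournament, so WLOG there is a deep escape $u_0v_0$ from $a_1$ to $a_2$ for $b$, with $u_0\in RS(a_1)$ neither critical nor pre-critical (thus $d(a_1,u_0)\le \ell-5$) and $v_0\in P_{a_2b}$ lying at or after the incoming vertex $v_{a_2b}$, so $d(a_2,v_0)\ge d(a_2,v_{a_2b})\ge \ell$ (using that the free section has length at least $3$).

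The deep escape immediately yields the inequality $d(a_1,b) \le d(a_1,u_0)+1+d(v_0,b)$, which rearranges to $d(a_2,v_0) \le d(a_2,b) - d(a_1,b) + (\ell-4)$. The role of the edge $xy$ is to pin the difference $d(a_2,b) - d(a_1,b)$ down to a small constant via lex-minimality of minimum paths. In the favorable sub-cases where the edge forces $x=c_{a_2b}$ on $P_{a_2b}$ or $y=c_{a_1b}$ on $P_{a_1b}$ (as occurs whenever $x\in RS(a_2)$ or $y\in RS(a_1)$, since then $d(a_1,x)>\ell-3$, $d(a_2,y)>\ell-3$ and the single-edge step put these vertices at the critical distance on the other path), the common suffix of $P_{a_1b}$ and $P_{a_2b}$ from the shared critical vertex pins $|d(a_1,b)-d(a_2,b)|\le 1$, and the previous bound collapses to $d(a_2,v_0)\le \ell-3$, contradicting $d(a_2,v_0)\ge \ell$.

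The main obstacle is the residual case where neither $x\in RS(a_2)$ nor $y\in RS(a_1)$ is forced, in particular when neither $x$ nor $y$ lies in $RS(A)$; then the pinning of distances fails. To close this case I would exploit the lex-uniqueness of the suffixes $P_{xb}$ and $P_{yb}$: if $xy$ is an edge of both $P_{a_1b}$ and $P_{a_2b}$, the two paths are forced to traverse it in opposite orientations (since $d(a_1,x)<d(a_1,y)$ on $P_{a_1b}$ but $d(a_2,y)<d(a_2,x)$ on $P_{a_2b}$), which gives two different first edges for the unique minimum path $P_{xb}$, a contradiction. If $xy$ is an edge of only one of the two paths, then a careful analysis of the possible positions of $x,y$ inside their free sections, together with the incoming-escape structure at $v_{a_1b}$ or $v_{a_2b}$, is used to produce an additional deep escape in the direction $a_2\to a_1$ for $b$, contradicting the transitive tournament property of the deep escape graph.
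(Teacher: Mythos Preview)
Your opening moves are correct and match the paper: reducing to $b_1=b_2=:b$ via Lemma~\ref{nonintche}(b), and then using the escape property to produce a deep escape $u_0v_0$ from $a_1$ to $a_2$ for $b$. After that, however, you take a different route that does not close.

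The paper's argument is a one-liner from here: the pair of edges $xy$ and $u_0v_0$ form a \emph{cross} between $P_{a_1b}$ and $P_{a_2b}$ in the sense of Lemma~\ref{nox}. Indeed $u_0$ precedes $y$ on $P_{a_1b}$ (with $d(u_0,y)\ge 3$, since $d(a_1,u_0)\le \ell-5$ while $d(a_1,y)\ge \ell-2$), and $x$ precedes $v_0$ on $P_{a_2b}$ (since $x$ lies in $FS(a_2,b)$, hence at or before the incoming vertex, while $v_0$ lies at or after it). Lemma~\ref{nox} forces one of the two ``degenerate'' configurations, each of which needs $u_0=y$ or $u_0y\in E$ (up to swapping the two paths); both are impossible because $d(u_0,y)\ge 3$. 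That is the whole proof.

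Your distance-inequality strategy, by contrast, tries to bound $d(a_2,v_0)$ via $d(a_2,b)-d(a_1,b)$, and then to pin this difference using the edge $xy$. But there is no mechanism forcing $|d(a_1,b)-d(a_2,b)|$ to be small: the two minimum paths can have lengths anywhere between $\ell+1$ and $2\ell-2^{d+2}-3$, and a single edge between their free sections says nothing about their global lengths. Your ``favorable sub-cases'' appeal to a ``shared critical vertex'' and a ``common suffix'', neither of which is available in general (there is no reason $c_{a_1b}$ or $c_{a_2b}$ should lie on the other path). The ``residual case'' sketch is not a proof: the claim that $xy$ would be traversed in opposite orientations, or that one could manufacture a reverse deep escape $a_2\to a_1$, is asserted but not established, and I do not see how to make it work without essentially re-deriving the cross lemma.

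In short, the missing idea is Lemma~\ref{nox}. Once you notice that the deep escape edge and the hypothetical edge $xy$ together form a cross, the contradiction is immediate; no distance bookkeeping on $d(a_i,b)$ is needed.
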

\begin{proof}
Consider an edge $xy$ where $x \in FS(a',b')$ and $y \in FS(a,b)$. Let us prove that there is a forbidden cross (see Lemma~\ref{nox}). Notice that $x \in RS(b')$ since $x \in P_{a'b'}$ ($FS(a',b')$ is a subpath of $P_{a'b'}$) and $x \notin RS(a')$ (it is after $c_{a'b'}$). Similarly, $y \in RS(b)$. So Lemma~\ref{nonintche}(b) ensures that $b=b'$. Assume w.l.o.g. that $a<a'$ in the order inherited from $b$. Hence there is a deep escape $uv$ from $a$ to $a'$ for $b$. By definition of deep escape, $y$ is strictly after $c_{ab}$ in $P_{ab}$ and $u$ is strictly before $c^{-}_{ab}$ in $P_{ab}$. So we have $d(u,y) \geq 3$ (since $P_{ab}$ is a minimum path). Moreover $x$ is before $v$ on $P_{a'b}$ by definition of the free section of $P_{a'b}$. Finally edges $xy$ and $uv$ contradict Lemma~\ref{nox}. 
\end{proof}

\begin{figure}
\center\includegraphics[scale=0.7]{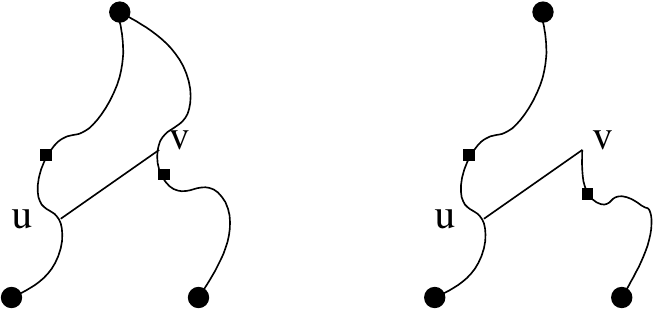}
\caption{The minimum path (at the left) is transformed into the jump path (at the right).}
\label{firstshortcut}
\end{figure}

\begin{lemma}\label{finiz}
The size of a pair with the escape property is at most $2^{d+1}-1$.
\end{lemma}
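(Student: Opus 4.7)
I plan to derive a contradiction via Theorem~\ref{bigvc}. Assume the pair $(A,B)$ satisfies the escape property and has size at least $2^{d+1}$, i.e.\ $\min(|A|,|B|) \geq 2^{d+1}$. Pick any $A' \subseteq A$ with $|A'|=d+1$ and $B' \subseteq B$ with $|B'|=2^{d+1}$; both independence and the escape property pass to subpairs, so $(A',B')$ still has them. The aim is to exhibit a family $\{S_{a,b}\}_{(a,b) \in A' \times B'}$ which is $r$-disconnecting for $(A',B')$ for a suitable $r$ of the form $2\ell - O(1)$. Since $|B'| = 2^{|A'|}$, Theorem~\ref{bigvc} will then force the distance VC-dimension of $G$ to be at least $d+1$, contradicting the standing hypothesis.

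For each pair $(a,b)$, the natural candidate is to take $S_{a,b}$ to be the free section $FS(a,b)$, possibly restricted to the vertices that belong \emph{only} to $P_{a,b}$ (so as to avoid spurious overlaps arising from two $AB$-paths sharing a common suffix). The set $S_{a,b}$ avoids $A \cup B$ because $FS(a,b) \subseteq RS(b)\setminus RS(a)$ and consists of internal vertices of $P_{a,b}$, using Observation~\ref{obscritical} and the independence hypothesis; its pairwise near-disjointness across pairs $(a,b)$ follows from Observation~\ref{obscritical2}, Lemma~\ref{nonintche}(b), and the lexicographic minimality of the $xz$-paths.

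The two disconnecting conditions to verify are: (i) after removing $\bigcup_{(a',b')\neq(a,b)} S_{a',b'}$, some $a$--$b$ walk of length at most $r$ survives; and (ii) after removing $S_{a,b}$, the distance from $a$ to $b$ exceeds $r$. For (i), Lemma~\ref{empty1} forbids any edge between distinct free sections, so the removals cannot touch the vertices of $P_{a,b}$ outside $FS(a,b)$, and $P_{a,b}$ itself survives intact with length at most $2\ell - 2^{d+2}-3 \leq r$. For (ii), the escape property is essential: in $G\setminus FS(a,b)$, any walk from $a$ to $b$ must either leave $RS(a)$ through the critical or pre-critical vertex of $P_{a,b}$ (costing $\geq \ell-4$ steps from $a$, and once $FS(a,b)$ is gone, at least roughly $\ell+1$ more steps to finish at $b$), or take a deep escape from $a$ to some $a' >_b a$; the latter deflects the walk into $RS(a')$, and by iterating the analysis --- using Lemma~\ref{empty1} to forbid crossing from one free section to another and the acyclicity of Lemma~\ref{acyclic} to bound the number of successive jumps --- one forces the total walk length to exceed $r$.

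The main obstacle will be the quantitative accounting in (ii), together with the precise calibration of $r$ so that both (i) and (ii) hold. The ``jump path'' of Figure~\ref{firstshortcut} is evidently the extremal alternative $a$--$b$ walk one must upper-bound in length to set $r$; the delicate part is verifying that once $FS(a,b)$ is removed, \emph{no} chain of deep escapes along the transitive tournament $<_b$ can be assembled into a walk shorter than $r$, since each extra hop through the order $<_b$ accumulates at least the root-section separation distance $4$ from Lemma~\ref{nonintche}(b) on top of the unavoidable $\ell-4$ traversal of $RS(a)$. Carrying this telescoping length computation out carefully, against the structural guarantees of independence and the escape property, is the heart of the proof.
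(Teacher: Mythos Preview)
Your high-level strategy---extract a subpair, build disconnecting sets, invoke Theorem~\ref{bigvc}---matches the paper's. However, the execution diverges in an essential way, and your sketch for (ii) has a real gap.

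The paper does \emph{not} use $FS(a,b)$ (or its private part inside $P_{ab}$) as the disconnecting set. Instead it introduces a new object: for each $b$, it recursively builds a \emph{jump path} from $a_i$ to $b$ by following $P_{a_ib}$ out to the incoming vertex $v_i$, taking the first-in escape back into $RS(a_j)$ with $j<i$, and then continuing along the already-built jump path of $a_jb$. The disconnecting set is $JPP(a,b)$, the vertices lying on the jump path of $ab$ and on no other jump path. Crucially, the graph is further restricted to the union of all jump-path vertices before the analysis begins. Four structural claims (Claims~\ref{formjump}, \ref{degreeemptysection}, \ref{clm:nosuprs}, \ref{passecxz}) then pin down exactly how any short $a$--$b$ walk can move between root sections and free sections in this restricted graph, culminating in Claim~\ref{passucs}: every $a$--$b$ path of length at most $2\ell-3$ meets $JPP(a,b)$. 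The value $r=2\ell-3$ is forced by the jump-path length bound $(2\ell-2^{d+2}-3)+2\cdot 2^{d+1}$, which is also why the paper keeps the full $|A|=2^{d+1}$ rather than shrinking to $d+1$ as you do.

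Your sketch for (ii) is where the gap lies. You propose that each deep escape ``accumulates at least the root-section separation distance $4$''; but that is precisely what the jump-path computation refutes---each rerouting edge adds at most $2$ to the length, not $4$, so chaining escapes does \emph{not} by itself push the length above $r$. The reason short paths are nevertheless trapped by $JPP(a,b)$ is not a telescoping length argument at all: it is the combinatorial fact (Claim~\ref{passecxz}) that every short $a$--$b$ path must pass through $c_{ab}$, combined with the neighbourhood analysis of Claim~\ref{degreeemptysection} (valid only after restricting to jump-path vertices) which leaves no exit from $FS(a,b)$ except back into $RS(a)$ or forward into the origin root section of $v_{ab}$. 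None of this structure is available in your setup, and your appeal to Lemma~\ref{empty1} for (i) only gives edge-disjointness of free sections, not the required disjointness of $S_{a',b'}$ from the surviving $a$--$b$ path. Without the jump-path construction and the restriction to jump-path vertices, neither (i) nor (ii) goes through as stated.
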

\begin{proof}
Assume by contradiction that a pair $A,B$ of size $2^{d+1}$ satisfies the escape property. Let $b \in B$. Let us denote by $a_1,\ldots,a_{2^{d+1}}$ the vertices of $A$ ordered along the order inherited from $b$. For every $i \geq 2$, we denote by $v_i$ the incoming vertex and by $u_iv_i$ a first-in escape to $a_i$ for $b$. By convention we put $v_1=b$ and $FS(a_1,b)$ is the subpath of $P_{a_1b}$ from $c_{a_1b}$ to $b$. Recall that there exists $j <i$ such that $u_i \in RS(a_j)$. Note that $v_j$ is after $u_i$ on $P_{a_jb}$. Indeed $u_i$ appears before $c_{a_jb}$ since $u_i \in RS(a_j)$ and $v_j$ appears after $c_{a_jb}$. Therefore the following collection of $Ab$-paths, called jump paths (for $b$), is well-defined:
\begin{itemize}
 \item The \emph{jump path of $a_1b$} is the $a_1b$-path.
 \item The \emph{jump path of $a_ib$} is the $a_iv_i$-subpath of $P_{a_ib}$, the edge $v_{i}u_{i}$ of origin root section $a_j$ and the suffix path on $u_i$ of the jump path of $a_jb$ (see Figure~\ref{firstshortcut}). 
\end{itemize}
Note that jump paths can be equal to minimum paths (see the rightmost part of Figure~\ref{dessinshort}).

Let us analyze a bit the structure of jump paths. The jump path of $a_ib$ starts with the $a_iv_i$-subpath of $P_{a_ib}$. In particular both paths coincide in $RS(a_i)$. Then the jump path of $a_ib$ contains the first in-escape to $a_i$, namely the edge $v_iu_i$. By definition of the order, the vertex $u_i$ is in $RS(a_j)$ for $j<i$ and then $u_iv_i$ is an escape from $a_j$ to $a_i$ for $b$. Thus $u_i$ is in $P_{a_jb} \cap RS(a_j)$. So it is on the jump path of $a_jb$. After this ``rerouting'' the two jump paths are the same and do not quit each other before the end of the path.

Jump paths follow minimum $AB$-paths except on incoming vertices in which they are ``rerouted''. A \emph{rerouting edge} is an edge $e$ such that there exists $i$ satisfying $e=u_iv_i$. Since after a rerouting edge, the jump path of $a_ib$ coincides with the jump path $a_jb$ for $j<i$, every jump path has at most $2^{d+1}$ rerouting edges. Moreover each rerouting edge increases the length of the path by at most two since $|d(u_i,b)-d(v_i,b)| \leq 1$ ($u_iv_i$ is an edge and $P_{a_jb}$ is minimum). Since a pair with the escape property is $d$-localized (each path $P_{ab}$ has length at most $2\ell - 2^{d+2}-3$), the length of the jump path of $ab$ is at most 
\begin{equation}\label{eq1}
(2\ell-2^{d+2}-3)+2^{d+1}\cdot 2 =2\ell-3     
\end{equation}
for every pair $a,b$. 
Let us now state a claim on the structure of the paths.

\begin{claim}\label{formjump}
Any vertex of a jump path is either in $RS(A)$ or in a free section $FS(a,b)$. Moreover any vertex of a jump path for $b$ is in $\bigcup_{a \in A} P_{ab}$.
\end{claim}
\begin{proof}
By induction on the order inherited from $b$. It holds for the jump path of $a_1b$. The jump path of $a_ib$ coincides with the $a_ib$-path from $a$ to the incoming vertex, \emph{i.e.} on $RS(a_i)$ and on $FS(a_i,b)$. By induction, it holds for the remaining vertices since the remaining of the jump path of $a_ib$ is included in the jump path of $a_jb$ for $j<i$.
\end{proof}

In the remaining of the proof we consider the restriction of the graph to the vertices of the jump paths of $ab$ for every $a,b \in A \times B$. Let $a_i \in A, b \in B$. Remind that the first vertex of $FS(a_i,b)$ is the vertex after $c_{a_ib}$ in $P_{a_ib}$ and the last one is $v_i$, the incoming vertex of $P_{a_ib}$.

\begin{claim}\label{degreeemptysection}
Let $i \geq 2$. The vertices of  $FS(a_i,b)$ induce a subpath $w_1,\ldots,w_k=v_i$ of $P_{a_ib}$. The only neighbors of these vertices are the following:
\begin{itemize}
 \item For every $1 \leq q \leq k$, the vertex $w_q$ is incident to $w_{q-1}$ and $w_{q+1}$ (if they exist).
 \item The vertex $w_k=v_i$ has neighbors in $RS(a_j)$ where $a_j$ is the origin root section of $v_i$ (in particular $j<i$ in the order inherited from $b$).
 \item The vertex $w_1$ is incident to $c_{a_ib}$.
\end{itemize}
\end{claim}
\begin{proof}
Claim~\ref{formjump} ensures that every vertex is either in $RS(A)$ or in $FS(A,B)$. By Lemma~\ref{empty1}, there is no edge between two free sections. 
So an edge leaving $FS(a_i,b)$ has an endpoint in $RS(A)$. By definition of incoming vertex, no vertex of $FS(a_i,b)$ distinct from $v_i$ is incident to a vertex of $RS(a_j)$ with $j \neq i$. Moreover, since $P_{a_ib}$ is a minimum path, $w_1$ is the unique vertex of $FS(a_i,b)$ which can be incident to $RS(a_i)$. \\
By definition of $v_i$, there exist edges between $v_i$ and the origin root section of $v_i$, namely $RS(a_j)$. Since every vertex has at most one origin root section, the second point holds. \\
The vertex $w_1$ is incident to $c_{a_ib}$ since they are consecutive in $P_{a_ib}$. Others neighbors of $w_1$ in $RS(a_i)$ must be critical vertices since $d(a_i,w_1)=\ell-2$ (indeed $d(c_{a_ib},a_i)=\ell-3$ and $P_{a_ib}$ is minimum). Thus the matching property of Observation~\ref{obscritical2} ensures that $w_1$ has no other neighbor in $RS(a_i)$, which concludes the proof of Claim~\ref{degreeemptysection}.
\end{proof}

In particular, Claim~\ref{degreeemptysection} ensures that any path $P$ leaving $FS(a_i,b)$ has to enter in $RS(a_i)$ or in $RS(a_j)$. Conversely, you can notice that any neighbor of a vertex in $RS(a_i)$ is either in $RS(a_i)$ or is in some $FS(a_j,b')$ for $j>i$. These two observations are the most important pieces of the proofs of the remaining statements.

Remind that any path of length at most $2\ell-3$ from $a$ to $b$ does not pass through $c_{a'b'}$ with $b \neq b'$. Indeed by independence, $d(a,c_{a'b'})\geq \ell-3$ and $d(b,c_{a'b'})>\ell$. 

\begin{claim}\label{clm:nosuprs}
 Any path of length at most $2\ell-3$ from $a_i$ to $b$ does not contain any vertex in $RS(a_j)$ for $j>i$ (in the order inherited from $b$).
\end{claim}
\begin{proof}
Assume by contradiction that such a path $P$ exists and denote by $j$ the maximum index such that $P$ passes through $RS(a_j)$. Note that $j \geq 2$. Let $u$ be the first vertex of $P$ in $RS(a_j)$ and let $v$ be the vertex before $u$ in $P$. The path $P$ cannot enter in $RS(a_j)$ through $c_{a_jb'}$ with $b'\neq b$ since $P$ has length at most $2\ell-3$. 
Lemma~\ref{nonintche}(a) ensures that $v \notin RS(A)$. So Claim~\ref{formjump} ensures that $v \in FS(a_k,b')$.

Assume first that $a_k \neq a_j$. Since $u \in RS(a_j)$, $uv$ is an escape from $a_j$ to $a_k$ for $b'$. In particular, it means that $k>j$. Since $u$ is the first vertex of $P$ in $RS(a_j)$, the path $P$ cannot enter in $FS(a_k,b')$ through $RS(a_j)$. So Claim~\ref{degreeemptysection} ensures that $P$ enters in $FS(a_k,b')$ through $RS(a_k)$, contradicting the maximality of $j$.

Assume now that $a_k=a_j$. Claim~\ref{degreeemptysection} ensures that the unique vertex of $FS(a_j,b')$ with a neighbor in $RS(a_j)$ is the first vertex of $FS(a_j,b')$, so $v$ is this vertex. Moreover, the unique neighbor of $v$ in $RS(a_j)$ is the vertex $c_{a_jb'}$ by Claim~\ref{degreeemptysection}. Since $P$ cannot pass through $c_{a'b'}$ with $a' \neq a_i$ and $b' \neq b$, we have $b'=b$. So $u=c_{a_jb}$ and $v$ is the first vertex of $FS(a_j,b)$. 
Let us now denote by $w$ the last vertex of $P$ in $RS(a_j)$. Note that $w \neq c_{a_jb'}$ for $b' \neq b$. Moreover the vertex after $w$ in $P$ cannot be in $FS(a_j,b)$ since otherwise this vertex would be $v$, and then $P$ would not be a path ($v$ would appear twice in $P$). So the edge used to live $RS(a_j)$ is an escape to $a_\ell$ for $b''$. In particular, $\ell > j$. By Claim~\ref{degreeemptysection}, vertices of $FS(a_\ell,b'')$ only have neighbors in $RS(a_j)$ and in $RS(a_\ell)$. Since $w$ is the last vertex in $RS(a_j)$, when $P$ leaves $FS(a_\ell,b')$ it enters in $RS(a_\ell)$, contradicting the maximality of $j$.
\end{proof}

\begin{claim}\label{passecxz}
The vertex $c_{ab}$ is in every path $P$ from $a$ to $b$ of length at most $2\ell-3$. Moreover if a vertex of the $ac_{ab}$-subpath of $P$ is not in $RS(a)$, then the next one is.
\end{claim}
\begin{proof}
Let $P$ be a path from $a$ to $b$ of length at most $2\ell-3$. Let $u$ be the last vertex of $u$ in $RS(a)$. Let $v$ be the vertex after $u$ in $P$. For distance reasons, $P$ the vertex $u$ is not $c_{ab'}$ for $b' \neq b$. Let us show that $P$ does not leave $RS(a)$ using an escape. Assume by contradiction that $v$ is in $FS(a_j,b')$ for $a_j>a$ (in the order inherited from $b'$). Let us denote by $w$ the first vertex of $P$ after $v$ which is not in $FS(a_j,b')$. By Claim~\ref{degreeemptysection}, $w$ is either in $RS(a)$ or is $c_{a_jb'}$. Since $w$ is after $u$ in $P$, $w \notin RS(a)$, so $w=c_{a_jb'}$. If $b \neq b'$, we have a distance contradiction since both $a$ and $b$ are at distance more than $\ell$ from $c_{a_jb'}$. If $b=b'$, then $a_j > a$ in the order inherited from $b$, contradicting Claim~\ref{clm:nosuprs}.

So the vertex $u$ is the vertex $c_{ab}$. In addition, in the $ac_{ab}$-subpath of $P$, if a vertex is not in $RS(a)$, then it is in $FS(a_j,b)$ where $a_j>a$. Claims~\ref{degreeemptysection} and~\ref{clm:nosuprs} ensure that the next vertex is in $RS(a)$.
\end{proof}

The \emph{jump private part} of $a$ and $b$, denoted by $JPP(a,b)$, is the set of vertices which are in the jump path of $ab$ and in no other jump path.

\begin{claim}\label{passucs}
All the paths of length at most $2\ell-3$ from $a$ to $b$ pass through $JPP(a,b)$.
\end{claim}
\begin{proof}
\begin{figure}
 \centering
 \includegraphics{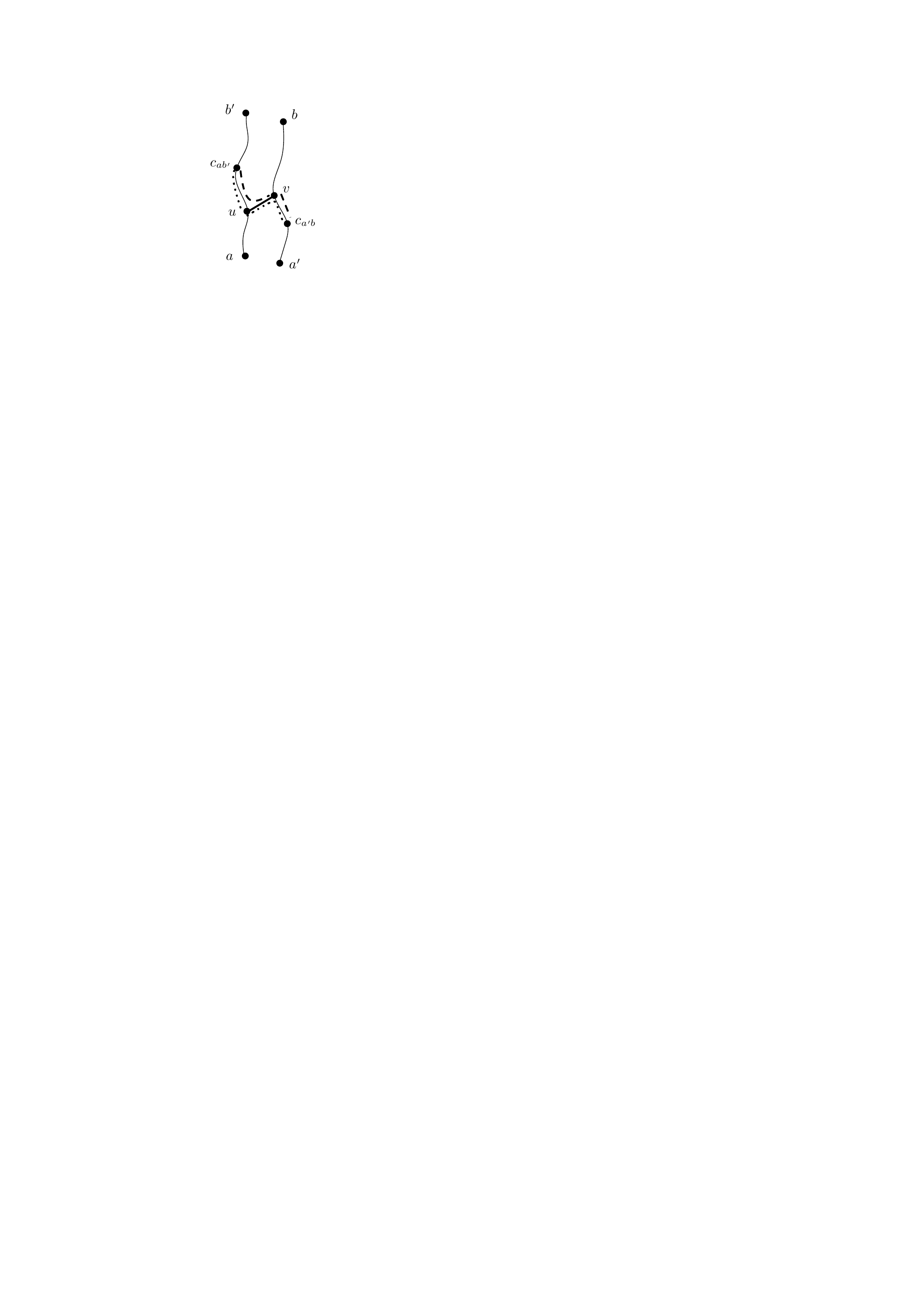}
 \caption{Illustration of Claim~\ref{passucs}. The two dotted paths are the two sides of an inequality. And the two dashed paths are the two sides of the other one.}
 \label{figlastclaim}
\end{figure}
Let $P$ be a path from $a$ to $b$ of length at most $2\ell-3$. 
Claim~\ref{passecxz} ensures that $P$ passes through $c_{ab}$. Assume by contradiction that the subpath of $P$ between $a$ and $c_{ab}$ does not pass through $JPP(a,b)$. 
Let $u$ be the last vertex of the $ac_{ab}$-subpath of $P$ which is in the path $P_{ab'}$ for $b' \neq b$. Such a vertex exists since $c_{ab}$ is not in $P_{ab'}$ for $b' \neq b$ by Observation~\ref{obscritical2}.
Let $v$ be the vertex after $u$ on $P$. And, for every $b' \in B$, $a$ is in the path $P_{ab'}$.

If $v \notin RS(a)$ then Claim~\ref{passecxz} ensures that the vertex after $v$ is in $RS(a)$. So $v$ is in $P_{a'b''}$ with $a' \neq a$. The vertex after $v$ is in $P_{ab''}$ since it is in $RS(a)$. By maximality of $u$, we have $b=b''$. Thus $u \in P_{ab'}$ for $b' \neq b$ (by definition of $u$) and $v \in  P_{a'b}$ for $a' \neq a$, a contradiction with Lemma~\ref{nonintche}(a).

So $v \in RS(a)$ and then $v \in P_{ab}$. Assume by contradiction that $v \notin JPP(a,b)$. So the vertex $v$ is in the jump path of $a'b$ for some $a' \neq a$. Free to modify $a'$, we may assume that the jump path of $a'b$ has been rerouted only once before $v$.
The vertex $v$ is on the $c_{a'b}b$-subpath of the jump path of $a'b$ and $u$ is on the $ac_{ab'}$-subpath of $P_{ab'}$. The two following inequalities, illustrated on Figure~\ref{figlastclaim}, provide a contradiction. \\
First $d(u,c_{ab'}) +3 < d(v,c_{a'b})+1$ since $d(a,c_{ab'}) \leq \ell-3$ and $d(a,c_{a'b})>\ell$. Indeed, by definition of critical vertex, $d(a,c_{ab'}=\ell-3$ (even in this induced subgraph) and $d(a,c_{a'b})>\ell$ is a consequence of the independence. Since $u$ is on a minimum $ac_{ab'}$-path, the inequality holds. \\
Second $d(v,c_{a'b}) < d(u,c_{ab'})+1$ since $d(b,c_{a'b}) \leq \ell$ and $d(b,c_{ab'})>\ell$ and $uv$ is an edge.
The first inequality is due to the fact that jump paths have length at most $2\ell-3$ and that the length of the $a'c_{a'b}$-subpath of the jump path of $a'b$ is exactly $\ell-3$. The second inequality is a consequence of the independence of $A,B$. \\
The sum of these two inequalities gives $3<2$, a contradiction.
\end{proof}

To conclude the proof of Lemma~\ref{finiz}, we apply Theorem~\ref{bigvc} with the sets $JPP(a,b)$ for paths of length at most $2\ell-3$. Equation~(\ref{eq1}) ensures that the jump path of $xz$ has length at most $2\ell-3$, so if $JPP(a,b)$ is not selected, there remain paths of length at most $2\ell-3$. The sets $JPP(x,z)$ are pairwise disjoint and are only on the jump path of $xz$. Claims \ref{passecxz} and \ref{passucs} ensure that the sets $JPP(x,z)$ are $(2\ell-3)$-disconnecting for $X,Z$. So the graph $G$ has distance VC-dimension at least $d+1$, a contradiction.
\end{proof}
By combining Theorem~\ref{Matou} and Lemmas~\ref{chemincourtlong},~\ref{depol},~\ref{sgclique},~\ref{finiz}, we obtain Theorem~\ref{erdpo}.


\section{Concluding remarks}
In Section~\ref{VC}, we did not make any attempt to improve the gap function. We made exponential extractions at several steps as Ramsey's extractions and the function of Theorem~\ref{Matou} is not expressed in the original paper of Matou\v{s}ek. Finding a polynomial gap instead of an exponential one is an interesting problem, though probably a hard one.
We can also study this problem for particular classes of graphs. Chepoi, Estellon and Vax\`es conjectured that the gap function between $\nu_{\ell}$ and $\tau_{\ell}$ for planar graphs is linear. More formally they conjectured the following.
\begin{conjecture}{(Chepoi, Estellon, Vax\`es \cite{ChepoiEV07})}
There exists a constant $c$ such that $\tau_{\ell}(G) \leq c \cdot \nu_{\ell}(G)$ for every $\ell$ and every planar graph $G$.
\end{conjecture}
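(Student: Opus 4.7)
The plan is to strengthen Corollary~\ref{erdosposa2VC} from a polynomial to a linear gap function in the planar case by establishing a \emph{fractional Helly property} for the family of balls of radius $\ell$, and then combining it with the distance $2$VC-dimension bound of $4$ via the Alon--Kleitman--Matou\v{s}ek framework. The combinatorial target is: there exist absolute constants $\alpha, \beta > 0$ such that in any planar graph $G$ and any family $\mathcal{F}$ of $n$ balls of radius $\ell$, if at least $\alpha \binom{n}{d+1}$ sub-families of size $d+1$ (with $d=4$) have non-empty common intersection, then some vertex of $G$ lies in at least $\beta n$ members of $\mathcal{F}$.

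First I would prove the fractional Helly property. The starting point is the Chepoi--Estellon--Vax\`es argument for the diameter-$2\ell$ case, rephrased as: any five pairwise intersecting balls in a planar graph have a common vertex. I would try to upgrade this pointwise Helly-type statement to a fractional version by a topological $K_5$-minor argument: if no vertex lies in a $\beta$-fraction of the balls, then by a weighted double-counting one extracts from $\mathcal{F}$ many $5$-tuples whose balls intersect pairwise but admit no common vertex, each such $5$-tuple producing (via minimum paths between witnesses, in the spirit of Theorem~\ref{kdminor}) a connected branch set configuration forcing a $K_5$-minor, contradicting planarity.

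Second, once fractional Helly is in hand, I would appeal to Matou\v{s}ek's sharpening of the $(p,q)$-theorem: for hypergraphs of bounded dual VC-dimension that satisfy the fractional Helly property, the transversality is \emph{linearly} bounded by the packing number, $\tau \leq C(\alpha,\beta,d)\cdot \nu$. By Observation~\ref{obs:Blisomdual} and Theorem~\ref{kdminor}, the $B_\ell$-hypergraph of a planar graph has dual $2$VC-dimension at most $4$, so the $(\nu_\ell+1, 5)$-property holds trivially, and the linear $(p,q)$-theorem yields the desired $\tau_\ell \leq c \cdot \nu_\ell$ with $c$ absolute.

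The main obstacle is the fractional Helly step: balls in graphs lack the convex-geometric tools (Radon partitions, colorful Carath\'eodory) that underlie fractional Helly in $\mathbb{R}^d$. The most promising surrogate is planar topology itself: each ball $B(v,\ell)$ can be given a canonical BFS-tree witness embedded in the plane, and one would argue that many such trees must share a vertex because otherwise their boundary traces, together with the pairwise intersection witnesses, realize a non-planar configuration. A secondary difficulty, even granting fractional Helly, is that the available linear $(p,q)$-theorems come with constants depending non-optimally on $\alpha$ and $\beta$; obtaining a tight linear constant (rather than simply a linear function) would likely require a direct planar construction using planar-separator decompositions along the nerve of $\mathcal{F}$ to match the packing to the transversal more carefully.
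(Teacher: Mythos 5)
The statement you set out to prove is not a theorem of the paper: it is the Chepoi--Estellon--Vax\`es conjecture, stated in the concluding section precisely because it is open, and the only progress the paper records is Dv\v{o}r\'ak's bound $\tau_\ell \leq c(\ell)\,\nu_\ell$ for bounded expansion classes \cite{Dvorak11}, where the constant depends on $\ell$. Your sketch does not close this gap, and two of its load-bearing steps fail. First, your rephrasing of \cite{ChepoiEV07} as ``any five pairwise intersecting balls of radius $\ell$ in a planar graph have a common vertex'' is not their theorem and is false: label the vertices of the cycle $C_8$ (planar, diameter $2\ell$ with $\ell=2$) by $0,\dots,7$; the five balls of radius $2$ centered at $0,1,3,5,6$ pairwise intersect but have empty common intersection, and more generally in $C_{4\ell}$ the family of \emph{all} balls of radius $\ell$ is pairwise intersecting (the diameter is $2\ell$) yet has no common vertex, since no vertex lies in the ball centered at its antipode. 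So balls in planar graphs have no finite Helly number at all. What is true is the \emph{fractional} Helly property, but that needs no planar topology: it already follows from Observation~\ref{obs:Blisomdual}, Theorem~\ref{kdminor} and Matou\v{s}ek's theorem (Theorem~\ref{Matou}). Hence your first step is either false (exact Helly) or already available (fractional Helly), and in neither case is it the bottleneck.

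Second, the ``linear $(p,q)$-theorem'' you appeal to does not exist. The Alon--Kleitman--Matou\v{s}ek machinery \cite{AlonK92,Matousek04} yields $\tau \leq f(p,d)$ for hypergraphs of bounded dual VC-dimension satisfying the $(p,d+1)$-property, where $f$ is obtained through a fractional transversal bound followed by $\varepsilon$-nets and is very far from linear; no version of it outputs $\tau \leq C\cdot\nu$ with an absolute constant, and for the planar case the best the present paper extracts, via Ding--Seymour--Winkler (Theorem~\ref{ding}, Corollary~\ref{erdosposa2VC}), is polynomial in $\nu_\ell$. Your claim that ``the $(\nu_\ell+1,5)$-property holds trivially'' is also wrong: what is trivial from the definition of $\nu_\ell$ is only the $(\nu_\ell+1,2)$-property, and upgrading a packing bound to a $(p,d+1)$-property with $p$ controlled by $\nu_\ell$ is exactly the hard content of Section~\ref{VC} (Lemmas~\ref{chemincourtlong}, \ref{depol}, \ref{sgclique}, \ref{finiz}), where the dependence obtained is exponential, not linear. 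In effect your plan reduces the conjecture to a statement at least as strong as the conjecture itself --- a linear, $\ell$-independent $(p,q)$-type bound for balls --- and the genuinely new idea that would be required (your closing remark about planar separator decompositions along the nerve is where it would have to live) is not supplied. The proposal should therefore be read as a program with serious obstructions, not a proof.
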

Dv\v{o}r\'ak proved in~\cite{Dvorak11} that $\tau_\ell \leq c(\ell) \nu_\ell$ for bounded expansion classes. Moreover the function $c$ is a polynomial function. 

In graph coloring, we need some structure to bound the chromatic number. The \emph{chromatic number} $\chi(G)$ of the graph $G$ is the minimum number of colors needed to color \emph{properly} the vertices of $G$, \emph{i.e} such that two adjacent vertices of $G$ receive distinct colors. The size of the maximum clique of $G$, denoted by $\omega(G)$, is a lower bound on the chromatic number $\chi(G)$. The gap between $\chi$ and $\omega$ can be arbitrarily large since there exist triangle-free graphs with an arbitrarily large chromatic number (Erd\H{o}s was the first to construct some of them in~\cite{Erdos59}). A class of graphs $\mathcal{C}$ is \emph{$\chi$-bounded} if there exists a function $f$ such that for every graph $G \in \mathcal{C}$, every induced subgraph $G'$ of $G$ satisfies $\chi(G') \leq f(\omega(G'))$. Dv\v{o}r\'ak and Kr\'a\soft{l} proved in \cite{dvorakk11} that graphs of bounded rankwidth are $\chi$-bounded. Actually they proved it for classes of graphs with cuts of small rank. Since the distance VC-dimension catches the complexity of the intersection of neighborhoods at large distance, the same might be extended for graphs of bounded distance VC-dimension.
\begin{conjecture}
Let $\mathcal{G}$ be a class of graphs. If there exists a function $f$ such that the distance VC-dimension of $G \in \mathcal{G}$ is at most $f(\omega(G))$ then $\mathcal{G}$ is $\chi$-bounded.
\end{conjecture}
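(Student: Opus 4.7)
The conjecture is open, so I sketch a plan that combines the shattering machinery of Section~\ref{VC} with induction on $\omega$. Assume $\mathcal{G}$ is hereditary and the distance VC-dimension of every $G \in \mathcal{G}$ is bounded by $d = f(\omega(G))$. I would argue by induction on $k = \omega(G)$: the base case $k \leq 1$ is trivial, and for the inductive step I assume that every $G' \in \mathcal{G}$ with $\omega(G') < k$ satisfies $\chi(G') \leq h(k-1)$ for some fixed function $h$. The goal is to extract, from a putative coloring-obstruction in $G$, a shattered set of size $d+1$ in the $B$-hypergraph of some induced subgraph, contradicting the hypothesis.

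The first step is a standard BFS reduction. Fix a vertex $v_0$ and consider the layers $L_i = \{u : d_G(u,v_0) = i\}$. Since $\chi(G) \leq 2 \max_i \chi(G[L_i])$, one may pass to a single layer $L$ whose induced subgraph has very large chromatic number. Any two vertices $u, u' \in L$ lie in the common ball $B(v_0, i)$ of radius $i$, which already provides one shattering witness for the pair. Inside $G[L]$ I would apply a Gyárfás-style recursive construction: starting from a vertex whose color class witnesses large $\chi$, one builds an induced tree $T$ in $G[L]$ whose leaves are pairwise far apart in $G$ and whose internal structure is tightly controlled by $\omega(G) = k$.

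The crux is to convert this induced tree $T$ into a witness of large distance VC-dimension. For each subset $S$ of the leaves of $T$, one needs a center $c_S$ and a radius $r_S$ with $B(c_S,r_S) \cap \mathrm{Leaves}(T) = S$. The plan is to attach ``private gadgets'' to each leaf that encode $S$ by ball-intersection, in the spirit of Theorem~\ref{bigvc}, where $\ell$-disconnecting families realize arbitrary subsets. The inductive hypothesis on $\omega$ would be used to prune any portion of $T$ whose induced subgraph has clique number strictly less than $k$ (colorable with $h(k-1)$ colors), so that the surviving ``obstruction'' is rigid enough to admit the required family of balls.

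The main obstacle is this shattering step. In Section~\ref{VC} the shattering argument is driven by a prescribed collection of balls; here one must run it in reverse and manufacture the required family of balls out of a pure chromatic witness. Erdős's triangle-free graphs with arbitrarily large $\chi$ (where every neighborhood is independent and the $B_1$-hypergraph is essentially a matching) warn that a chromatic hypothesis gives no immediate VC-theoretic leverage: the induction on $\omega$ must do genuine work. A plausible alternative is to avoid the Gyárfás route entirely and instead apply an $\epsilon$-net argument of Haussler--Welzl to the $B_1$-hypergraph, producing a small dominating set whose removal decreases $\chi$ substantially, and then iterate. Either way, the translation of a chromatic obstruction into a VC-dimension witness is, to my knowledge, not handled by existing techniques, which is consistent with the statement being left as a conjecture.
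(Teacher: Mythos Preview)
The paper gives no proof of this statement: it is stated explicitly as a conjecture in the concluding remarks, and you correctly recognise this at the outset. There is therefore nothing to compare your proposal against, and your closing remark that the obstruction ``is consistent with the statement being left as a conjecture'' is exactly right.

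That said, your sketch has a genuine gap that you partly identify yourself. The BFS layering step does not behave as you claim: the bound $\chi(G) \leq 2\max_i \chi(G[L_i])$ is false in general (bipartite graphs with both layers independent already break it), and even the correct version $\chi(G) \leq 1 + \max_i \chi(G[L_{i-1}\cup L_i])$ does not reduce to a single layer. More importantly, the ``shattering step'' where you propose to attach private gadgets to leaves of an induced tree $T$ in order to realise arbitrary subsets by balls is not available: you are working in a fixed graph $G$, not constructing one, so you cannot add gadgets. The machinery of Theorem~\ref{bigvc} goes in the direction \emph{from} an $\ell$-disconnecting family \emph{to} large distance VC-dimension, and producing such a family from a purely chromatic obstruction is precisely the missing idea. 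Your Erd\H{o}s example is the right cautionary note, and your honest assessment that existing techniques do not handle this translation is accurate.
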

We also conjecture that the following graph classes, known to be $\chi$-bounded, have a bounded distance VC-dimension.
\begin{conjecture}
The distance VC-dimension of every $P_{\ell}$-free graph $G$ is bounded by a function of $\ell$ and $\omega(G)$. 
Similarly the distance VC-dimension of every circle graph $G$ is bounded by a function of $\omega(G)$.
\end{conjecture}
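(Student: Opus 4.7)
The plan for the $P_\ell$-free part starts from the simple observation that a shortest path in any graph is induced, hence every connected $P_\ell$-free graph has diameter at most $\ell-2$. Together with Observation~\ref{obs:connectedgraphs} and the fact that both $P_\ell$-freeness and the inequality $\omega \leq \omega(G)$ are hereditary, this reduces the problem to bounding the VC-dimension of the $B$-hypergraph of a connected $P_\ell$-free graph of diameter at most $\ell-2$ and clique number at most $\omega(G)$; in particular, every shattering witness is a ball of radius at most $\ell-2$.

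Suppose for contradiction that a set $X = \{x_1, \ldots, x_d\}$ is shattered, with $d$ large compared to some function of $\ell$ and $\omega(G)$. For each $S \subseteq X$, fix a witness ball $B(c_S, r_S)$ of minimum radius such that $B(c_S, r_S) \cap X = S$; for each $x_i \in S$, fix a shortest $c_S x_i$-path $P_{S,i}$, of length at most $\ell-2$. The plan would be to apply a Ramsey-type extraction in the spirit of Lemma~\ref{chemincourtlong}: first equalize the pairwise distances between the $x_i$, then equalize the radii $r_S$ and the lengths of the paths $P_{S,i}$. Once the configuration is sufficiently uniform, one would seek to derive a contradiction by concatenating witness paths. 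Either two paths $P_{S,i}$ and $P_{S',j}$ meet so as to produce an induced subpath on at least $\ell$ vertices (contradicting $P_\ell$-freeness), or many short paths pile up on a common neighborhood, producing a clique of size $\omega(G)+1$ (contradicting the clique bound).

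For the circle graph part, the natural approach is to work in the chord representation $(c_v)_{v \in V}$. For fixed $r$, a ball $B(v, r)$ is exactly the set of chords reachable from $c_v$ in at most $r$ crossings. The crucial geometric observation is that for a single chord $c$, the set of chords crossing $c$ has a simple ``two-arc'' description on the circle, and hypergraphs arising from arc or interval systems have bounded VC-dimension. The plan is to iterate this bound inductively on $r$: at each reachability step, the set of reachable chords should remain describable by a controlled number of arc unions on the circle, where the clique bound $\omega(G)$ prevents too many chords from clustering at any single arc. A large shattered set would then force the arc description to have too many pieces, yielding a large clique in a suitable derived structure.

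The main obstacle in both parts is that, unlike in Theorem~\ref{kdminor} or Theorem~\ref{rankwidthborne}, no clean combinatorial obstruction can be read directly off a shattered set. In the $P_\ell$-free case, extracting an induced $P_\ell$ from the short witness paths requires delicate control of how the paths $P_{S,i}$ interact for different subsets $S$; naive concatenations can be shortcut by chords that destroy inducedness, which is precisely the phenomenon the $P_\ell$-freeness is supposed to control but which is difficult to exploit quantitatively. In the circle graph case, iterating the crossing relation rapidly produces reachable sets that are unions of very many arcs, and bounding the book-keeping by a function of $\omega(G)$ alone (independent of $r$) is the crux of the difficulty. I expect that a successful attack in both settings will combine a $\chi$-boundedness-style induction on the clique number with extremal set-theoretic tools in the spirit of Section~\ref{VC}.
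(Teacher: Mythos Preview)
The statement you are addressing is a \emph{conjecture} in the paper, not a theorem: it appears in the concluding remarks as an open problem, and the paper offers no proof or proof sketch for it. There is therefore nothing to compare your proposal against.

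Your write-up is honest about this in the sense that it is a plan rather than a proof, and you explicitly identify the obstacles in both cases. But to be clear, nothing in your outline constitutes an actual argument: the $P_\ell$-free part reduces to bounded diameter and then proposes a Ramsey extraction whose conclusion (``either a long induced path or a large clique'') is exactly the dichotomy that would need to be \emph{proved}, not assumed; and the circle-graph part asserts that iterated crossing-reachability stays describable by boundedly many arcs, which is again the heart of the conjecture rather than a step toward it. So the proposal identifies reasonable angles of attack but does not close any gap, and the paper does not either --- the statement remains open.
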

\vspace{18pt}

\noindent\textbf{Acknowledgement:} The authors want to thank the anonymous reviewers for fruitful comments. We also want to thanks L\'aszl\'o Kozma and Shay Moran for helpful remarks about Theorem~\ref{Matou}.

\bibliographystyle{plain}

\begin{thebibliography}{10}

\bibitem{AlonB06}
N.~Alon, G.~Brightwell, H.~Kierstead, A.~Kostochka, and P.~Winkler.
\newblock Dominating sets in $k$-majority tournaments.
\newblock {\em J. Comb. Theory, Ser. B}, 96(3):374--387, 2006.

\bibitem{AlonK92}
N.~Alon and D.~Kleitman.
\newblock Piercing convex sets and the {H}adwiger {D}ebrunner $(p,q)$-problem.
\newblock In {\em Adv. Math. 96}, pages 103--112, 1992.

\bibitem{Assouad83}
P.~Assouad.
\newblock Densit\'e et dimension.
\newblock {\em Ann. Inst. Fourier (Grenoble)}, 33:233--282, 1983.

\bibitem{ChazelleW89}
B.~Chazelle and E.~Welzl.
\newblock Quasi-optimal range searching in space of finite {VC}-dimension.
\newblock {\em Discrete {\&} Computational Geometry}, 4:467--489, 1989.

\bibitem{ChepoiEV07}
V.~Chepoi, B.~Estellon, and Y.~Vax{\`e}s.
\newblock Covering planar graphs with a fixed number of balls.
\newblock {\em Discrete {\&} Computational Geometry}, 37(2):237--244, 2007.

\bibitem{DingSW94}
G.~Ding, P.~Seymour, and P.~Winkler.
\newblock Bounding the vertex cover number of a hypergraph.
\newblock {\em Combinatorica}, 14(1):23--34, 1994.

\bibitem{dvorakk11}
Z.~Dv{\v{o}}r{\'a}k and D.~Kr{\'a}\soft{l}.
\newblock Classes of graphs with small rank decompositions are $\chi$-bounded.
\newblock {\em Eur. J. Comb.}, 33(4):679--683, 2012.

\bibitem{Dvorak11}
Z.~Dv\v{o}r{\'a}k.
\newblock Constant-factor approximation of the domination number in sparse
  graphs.
\newblock {\em Eur. J. Comb.}, 34(5):833--840, 2013.

\bibitem{ErdosP65}
P.~Erd\H{o}s and L.~P{\'o}sa.
\newblock On independent circuits contained in a graph.
\newblock {\em Canad. J. Math.}, 17:347--352, 1965.

\bibitem{Erdos59}
P.~Erd{\H{o}}s.
\newblock Graph theory and probability.
\newblock {\em Canad. J. Math}, 11:34--38, 1959.

\bibitem{GavoillePRS01}
C.~Gavoille, D.~Peleg, A.~Raspaud, and E.~Sopena.
\newblock Small $k$-dominating sets in planar graphs with applications.
\newblock In {\em Proceedings of WG '01}, pages 201--216. Springer-Verlag,
  2001.

\bibitem{GoddardH02}
W~Goddard and M.~Henning.
\newblock Domination in planar graphs with small diameter.
\newblock {\em J. Graph Theory}, 40(1):1--25, May 2002.

\bibitem{HausslerW86}
D.~Haussler and E.~Welzl.
\newblock Epsilon-nets and simplex range queries.
\newblock In {\em Symposium on Computational Geometry}, pages 61--71, 1986.

\bibitem{LuczakT10}
T.~Luczak and S.~Thomass{\'e}.
\newblock Coloring dense graphs via {VC}-dimension.
\newblock {\em submitted}.

\bibitem{MacGillivrayS96}
G.~MacGillivray and K.~Seyffarth.
\newblock Domination numbers of planar graphs.
\newblock {\em J. Graph Theory}, 22(3):213--229, 1996.

\bibitem{Matousek04}
J.~Matou\v{s}ek.
\newblock Bounded {VC}-dimension implies a fractional {H}elly theorem.
\newblock {\em Discrete {\&} Computational Geometry}, 31(2):251--255, 2004.

\bibitem{OumSeymour06}
S.~Oum and P.~Seymour.
\newblock Approximating clique-width and branch-width.
\newblock {\em J. Comb. Theory, Ser. B}, 96(4):514--528, 2006.

\bibitem{Sauer72}
N.~Sauer.
\newblock On the density of families of sets.
\newblock {\em J. Comb. Theory, Ser. A}, 13(1):145--147, 1972.

\bibitem{VapnikC71}
V.~Vapnik and A.~Chervonenkis.
\newblock On the uniform convergence of relative frequencies of events to their
  probabilities.
\newblock {\em Theory of Probability and its Applications}, 16:264--280, 1971.

\end{thebibliography}

\end{document}